\documentclass[article,10pt,letterpaper]{article}

\usepackage[letterpaper,left=1.5in,right=1.5in,top=1.5in,bottom=1.5in]{geometry}

\title{On the uniqueness of $\infty$-categorical enhancements of triangulated categories}
\author{Benjamin Antieau}
\date{\today}


\usepackage[pdfstartview=FitH,
            pdfauthor={Antieau},
            pdftitle={Uniqueness of enhancements},
            colorlinks,
            linkcolor=reference,
            citecolor=citation,
            urlcolor=e-mail,
            backref]{hyperref}

\usepackage{amsmath}
\usepackage{amscd}
\usepackage{amsbsy}
\usepackage{amssymb}
\usepackage{verbatim}
\usepackage{eufrak}
\usepackage{eucal}
\usepackage{microtype}
\usepackage{hyperref}
\usepackage{mathrsfs}
\usepackage{amsthm}
\usepackage{stmaryrd}
\pagestyle{headings}
\usepackage{xy}
\input xy
\xyoption{all}
\usepackage{tikz}
\usetikzlibrary{matrix,arrows}
\usepackage{dsfont}
\usepackage[T1]{fontenc}

\usepackage{fancyhdr}
\pagestyle{fancy}

\fancyhead{}
\fancyhead[LO,R]{\bfseries\footnotesize\thepage}
\fancyhead[LE]{\bfseries\footnotesize\rightmark}
\fancyhead[RO]{\bfseries\footnotesize\rightmark}
\chead[]{}
\cfoot[]{}
\setlength{\headheight}{1cm}


\usepackage{color}
\definecolor{todo}{rgb}{1,0,0}
\definecolor{conditional}{rgb}{0,1,0}
\definecolor{e-mail}{rgb}{0,.40,.80}
\definecolor{reference}{rgb}{.20,.60,.22}
\definecolor{mrnumber}{rgb}{.80,.40,0}
\definecolor{citation}{rgb}{0,.40,.80}


\setlength{\marginparwidth}{1.2in}
\let\oldmarginpar\marginpar
\renewcommand\marginpar[1]{\-\oldmarginpar[\raggedleft\footnotesize #1]%
{\raggedright\footnotesize #1}}

\newcommand{\Ascr}{\mathcal{A}}
\newcommand{\Bscr}{\mathcal{B}}
\newcommand{\Cscr}{\mathcal{C}}
\newcommand{\Dscr}{\mathcal{D}}
\newcommand{\Escr}{\mathcal{E}}
\newcommand{\Fscr}{\mathcal{F}}
\newcommand{\Gscr}{\mathcal{G}}
\newcommand{\Hscr}{\mathcal{H}}
\newcommand{\Kscr}{\mathcal{K}}

\newcommand{\Oscr}{\mathcal{O}}
\newcommand{\Pscr}{\mathcal{P}}

\newcommand{\Rscr}{\mathcal{R}}
\newcommand{\Sscr}{\mathcal{S}}

\newcommand{\Wscr}{\mathcal{W}}


\newcommand{\B}{\mathrm{B}}

\newcommand{\D}{\mathrm{D}}

\renewcommand{\H}{\mathrm{H}}
\newcommand{\K}{\mathrm{K}}
\renewcommand{\L}{\mathrm{L}}

\newcommand{\R}{\mathrm{R}}

\newcommand{\T}{\mathrm{T}}


\renewcommand{\AA}{\mathds{A}}

\newcommand{\EE}{\mathds{E}}
\newcommand{\FF}{\mathds{F}}

\newcommand{\NN}{\mathds{N}}

\newcommand{\QQ}{\mathds{Q}}

\renewcommand{\SS}{\mathds{S}}

\newcommand{\ZZ}{\mathds{Z}}


\newcommand{\gr}{\mathrm{gr}}

\newcommand{\proj}{\mathrm{proj}}

\newcommand{\op}{\mathrm{op}}
\newcommand{\ex}{\mathrm{ex}}
\newcommand{\lex}{\mathrm{lex}}

\newcommand{\cn}{\mathrm{cn}}

\newcommand{\sing}{\mathrm{sing}}


\newcommand{\Sp}{\Sscr\mathrm{p}}

\newcommand{\Op}{\mathrm{Op}}
\newcommand{\Groth}{\mathrm{Groth}}

\newcommand{\cofib}{\mathrm{cofib}}
\newcommand{\SWscr}{\Sscr\Wscr}
\newcommand{\Perfscr}{\Pscr\mathrm{erf}}
\newcommand{\wD}{\widehat{\D}}
\newcommand{\wDscr}{\widehat{\Dscr}}
\newcommand{\cD}{\check{\D}}
\newcommand{\cDscr}{\check{\Dscr}}
\newcommand{\sep}{\mathrm{sep}}
\newcommand{\comp}{\mathrm{comp}}
\newcommand{\Tors}{\mathrm{Tors}}

\newcommand{\heart}{\heartsuit}

\renewcommand{\geq}{\geqslant}
\renewcommand{\leq}{\leqslant}
\newcommand{\Ho}{\mathrm{Ho}}



\newcommand{\Cat}{\mathrm{Cat}}

\newcommand{\PrL}{\mathrm{Pr}^{\mathrm{L}}}


\DeclareMathOperator{\Ext}{Ext}

\newcommand{\THH}{\mathrm{THH}}




\DeclareMathOperator{\Fun}{Fun}

\DeclareMathOperator{\Hom}{Hom}
\newcommand{\Map}{\mathrm{Map}}
\newcommand{\MapSp}{\mathbf{Map}}

\DeclareMathOperator{\PShv}{PShv}
\DeclareMathOperator{\Shv}{Shv}
\newcommand{\Sets}{\mathrm{Sets}}



\newcommand{\Ch}{\mathrm{Ch}}

\newcommand{\Mod}{\mathrm{Mod}}
\newcommand{\QCoh}{\mathrm{QCoh}}
\newcommand{\qc}{\mathrm{qc}}
\newcommand{\Coh}{\mathrm{Coh}}

\newcommand{\Perf}{\mathrm{Perf}}
\newcommand{\Ind}{\mathrm{Ind}}




\newcommand{\Gm}{\mathds{G}_{m}}
\newcommand{\Ga}{\mathds{G}_{a}}


\newcommand{\BP}{\mathrm{BP}}




\DeclareMathOperator*{\colim}{colim}

\newcommand{\et}{\mathrm{\acute{e}t}}


\DeclareMathOperator{\Spec}{Spec}



\newcommand{\lrlarrows}{%
    \tikz[minimum height=0ex]
          \path[->]
             node (a)            {}
       node (b) at (1em,0) {}
         (b.north)  edge (a.north)
               (a.center) edge (b.center)
                 (b.south)  edge (a.south);%
}
\newcommand{\we}{\simeq}
\newcommand{\iso}{\cong}

\theoremstyle{plain}
\newtheorem{theorem}{Theorem}[section]
\newtheorem*{theorem*}{Theorem}
\newtheorem{lemma}[theorem]{Lemma}

\newtheorem{proposition}[theorem]{Proposition}

\newtheorem{conjecture}[theorem]{Conjecture}
\newtheorem{corollary}[theorem]{Corollary}
\newtheorem*{corollary*}{Corollary}

\theoremstyle{plain}
\newtheorem{maintheorem}{Theorem}
\newtheorem{mainmetatheorem}[maintheorem]{Meta Theorem}

\newtheorem{maincorollary}[maintheorem]{Corollary}

\theoremstyle{definition}

\newtheorem{mainexample}[maintheorem]{Example}
\newtheorem{mainremark}[maintheorem]{Remark}

\newtheoremstyle{named}{}{}{\itshape}{}{\bfseries}{.}{.5em}{#1 \thmnote{#3}}
\theoremstyle{named}

\theoremstyle{definition}
\newtheorem{definition}[theorem]{Definition}
\newtheorem{warning}[theorem]{Warning}
\newtheorem{variant}[theorem]{Variant}

\newtheorem{example}[theorem]{Example}
\newtheorem*{example*}{Example}

\newtheorem{question}[theorem]{Question}
\newtheorem*{question*}{Question}

\newtheorem{remark}[theorem]{Remark}

\begin{document}

\maketitle

\begin{abstract}
    \noindent
    We study the problem of when triangulated categories admit unique $\infty$-categorical
    enhancements. Our results use Lurie's theory of prestable $\infty$-categories to
    give conceptual proofs of, and in many cases strengthen, previous work on the subject
    by Lunts--Orlov and Canonaco--Stellari. We also give a wide range of examples involving
    quasi-coherent sheaves, categories of almost modules, and local cohomology to
    illustrate the theory of prestable $\infty$-categories. Finally, we propose
    a theory of stable $n$-categories which would interpolate between
    triangulated categories and stable $\infty$-categories.

    \paragraph{Key Words.} Triangulated categories, prestable $\infty$-categories,
    Grothendieck abelian categories, additive categories, quasi-coherent sheaves.

    \paragraph{Mathematics Subject Classification 2010.}
    \href{https://mathscinet.ams.org/mathscinet/msc/msc2020.html?t=14Axx&btn=Current}{14A30},
    \href{https://mathscinet.ams.org/mathscinet/msc/msc2020.html?t=14Fxx&btn=Current}{14F08},
    \href{https://mathscinet.ams.org/mathscinet/msc/msc2020.html?t=18Exx&btn=Current}{18E05},
    \href{https://mathscinet.ams.org/mathscinet/msc/msc2020.html?t=18Exx&btn=Current}{18E10},
    \href{https://mathscinet.ams.org/mathscinet/msc/msc2020.html?t=18Gxx&btn=Current}{18G80}.
\end{abstract}

\tableofcontents

\section{Introduction}

This paper is a study of the question of when triangulated categories admit
unique $\infty$-categorical enhancements. Our emphasis is
on exploring to what extent the proofs can be made to rely only on universal
properties. That this is possible is due to J. Lurie's theory of {\em prestable
$\infty$-categories}. We should say at the outset that our results, while
$\infty$-categorical in nature, imply results about dg enhancements as well and recover most
of the major results of the papers of Lunts--Orlov~\cite{lunts-orlov} and
Canonaco--Stellari~\cite{cs-uniqueness}. (See Remark~\ref{mr:cns} for
discussion of the new paper~\cite{cns} of Canonaco--Neeman--Stellari on this subject.) Moreover, there are more
$\infty$-categorical enhancements than dg enhancements in general, so in some
sense what is proved here is stronger.

Suppose that $\Ascr$ is a Grothendieck abelian category. We can attach to $\Ascr$ three
different triangulated categories, the {\em unseparated derived category}
$\check{\D}(\Ascr)$, the {\em separated derived category} $\D(\Ascr)$, and
the {\em completed derived category} $\widehat{\D}(\Ascr)$.

The separated derived category is the familiar triangulated category attached to a
Grothendieck abelian category by inverting all quasi-isomorphisms in the homotopy category
$\K(\Ch(A))$ of chain complexes of objects of $\Ascr$. The unseparated derived category
$\check{\D}(\Ascr)$ was introduced by Krause~\cite{krause-noetherian} and also studied
in~\cite{neeman-injectives,krause-deriving}. It is the
homotopy category of all complexes of injective objects of $\Ascr$ and is often written
$\K(\Ch(\mathrm{Inj}_\Ascr))$. The third triangulated category $\widehat{\D}(\Ascr)$ is less
familiar. It is the homotopy category of the left completion of the standard stable
$\infty$-categorical model of $\D(\Ascr)$ with respect to the standard
$t$-structure. We do not know of a direct construction of $\wD(\Ascr)$ that
starts with the triangulated category $\D(\Ascr)$.

Each of these flavors of the derived category of a Grothendieck abelian category $\Ascr$
admits an $\infty$-categorical enhancement, which we write as $\cDscr(\Ascr)$,
$\Dscr(\Ascr)$, and $\wDscr(\Ascr)$, respectively. In this generality, the results are due to Lurie
thanks to the dg nerve construction. See~\cite[Section~C.5.8]{sag} for $\cD(\Ascr)$
and~\cite[Section~1.3.5]{ha} for $\D(\Ascr)$. For the completed derived category
$\wD(\Ascr)$, we defined it via its enhancement, as in~\cite[Section~C.5.9]{sag}.
In spirit, the enhancement for $\D(\Ascr)$ as a stable model category is classical and goes
back to Joyal (in a 1984 letter to Grothendieck) and Spaltenstein~\cite{spaltenstein}; the
enhancement of $\cD(\Ascr)$ goes back effectively to~\cite{krause-noetherian}. Given the
existence of these enhancements, we wonder about uniqueness.

All three triangulated categories admit natural $t$-structures and the categories of
coconnective objects\footnote{Working homologically, an object $X$ is {\bf coconnective} if,
with respect to the given $t$-structure, $\H_i(X)=0$ for $i>0$; it is {\bf
connective} if $\H_i(X)=0$ for $i<0$.} are all equivalent: $\cD(\Ascr)_{\leq
0}\we\D(\Ascr)_{\leq
0}\we\wD(\Ascr)_{\leq 0}$. The difference between these derived categories then lies in the
categories
$\cD(\Ascr)_{\geq 0}$, $\D(\Ascr)_{\geq 0}$, and $\wD(\Ascr)_{\geq 0}$ of connective objects.
Lurie has developed in~\cite[Appendix~C]{sag} the theory of prestable $\infty$-categories;
these are $\infty$-categories $\Cscr$ such that the homotopy category $h\Cscr$ behaves like
the category of connective objects for a $t$-structure. These $\infty$-categories give a rich
generalization of the theory of abelian categories: the {\em Grothendieck prestable
$\infty$-categories} admit a Gabriel--Popescu theorem, which reduces much of their study
to the $\infty$-categories of the form $\Dscr(R)_{\geq 0}=\Mod_R(\Sp^\cn)$ of
$R$-modules in connective spectra where $R$ is a connective $\EE_1$-algebra.\footnote{Every
dg algebra $R$ has an underlying
$\EE_1$-algebra. The theory of $\EE_1$-algebras is the natural theory of associative algebras
in homotopy theory. See~\cite{shipley} or~\cite{ha} for details.}
Moreover, there are several uniqueness theorems in~\cite{sag}. For example, if $\Ascr$ is a
Grothendieck abelian category, then $\Dscr(\Ascr)_{\geq
0}$ is the unique separated $0$-complicial Grothendieck prestable $\infty$-category with
heart equivalent to $\Ascr$. In the end, our proofs boil down to uniqueness statements such
as these.

An $\infty$-categorical enhancement of a triangulated category $\T$ is a stable
$\infty$-category\footnote{It is important to take $\Cscr$ to be stable
or at least spectrally enriched: we can view any space $X$ as an
$\infty$-groupoid and the homotopy category will be the fundamental groupoid
$\tau_{\leq 1}X$. If $X$ is simply connected, we have thus found an
$\infty$-categorical enhancement for the terminal category, so such
enhancements are far from unique.} $\Cscr$ together with a triangulated equivalence $h\Cscr\we\T$ from
the homotopy category of $\Cscr$, with its canonical triangulated structure, to
$\T$. For remarks on the distinction between $\infty$-categorical enhancements
and dg enhancements, see the discussion around Meta Theorem~\ref{meta:dg},
Section~\ref{sec:meta}, and Section~\ref{sub:nstable}. The dg enhancements model Keller's {\em algebraic
triangulated categories}~\cite{keller}, while stable $\infty$-categories
provide models for Schwede's {\em topological triangulated categories}~\cite{schwede-algebraic}.

Our first theorem gives a partial positive answer to an open question of Canonaco and Stellari;
see~\cite[Question~4.6]{cs-tour}. The following corollary partially
answers~\cite[Question~4.7]{cs-tour} and generalizes several results of~\cite{lunts-orlov,cs-uniqueness}.

Recall that a Grothendieck
abelian category $\Ascr$ is {\bf compactly generated}, or {\bf locally finitely
presented}, if for each $X\in\Ascr$ there is a
collection of compact
(or locally finitely presented) objects\footnote{An object $X$ in a category $\Cscr$ with filtered colimits
is {\bf compact} if $\Hom_{\Cscr}(X,-)$, viewed as a functor $\Cscr\rightarrow\Sets$ commutes with filtered colimits.}
$\{Y_i\}\in\Ascr^\omega$ and a surjection $\oplus Y_i\rightarrow X$.
A Grothendieck abelian category $\Ascr$ is {\bf locally coherent}
if it is compactly generated and $\Ascr^\omega$ is abelian. The latter
condition is equivalent to $\Ascr$ being compactly generated and $\Ascr^\omega$ being closed under finite limits in
$\Ascr$.

\begin{maintheorem}\label{mt:1}
    If $\Ascr$ is a locally coherent Grothendieck abelian category, then the unseparated derived category $\check{\D}(\Ascr)$ admits a unique
    $\infty$-categorical enhancement.
\end{maintheorem}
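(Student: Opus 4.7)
The plan is to reduce the uniqueness of the $\infty$-categorical enhancement of $\check{\D}(\Ascr)$ to Lurie's uniqueness theorems for Grothendieck prestable $\infty$-categories, with the locally coherent hypothesis entering through compact generation. Let $\Cscr$ be any stable $\infty$-categorical enhancement, fixed via a triangulated equivalence $\phi\colon h\Cscr\we\check{\D}(\Ascr)$. The canonical $t$-structure on $\check{\D}(\Ascr)$ is determined by a pair of full subcategories of the triangulated category closed under appropriate operations, and so transports along $\phi$ to full subcategories $\Cscr_{\geq 0}$ and $\Cscr_{\leq 0}$ cutting out a $t$-structure on $\Cscr$ whose heart is equivalent to $\Ascr$.

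The next step is to upgrade $\Cscr_{\geq 0}$ to a Grothendieck prestable $\infty$-category in the sense of Lurie. Because $\check{\D}(\Ascr)$ admits all small coproducts and its $t$-structure is compatible with filtered colimits, $\Cscr$ inherits these properties, and then $\Cscr_{\geq 0}$ is presentable with exact filtered colimits, so it is Grothendieck prestable with heart $\Ascr$.

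The heart of the proof is to match $\Cscr_{\geq 0}$ with $\cDscr(\Ascr)_{\geq 0}$. The locally coherent hypothesis is decisive here: it gives that $\Ascr^\omega\subset\Ascr$ is an abelian subcategory and that the compact objects of $\cDscr(\Ascr)$ correspond, via the standard $t$-structure, to bounded complexes of compact objects of $\Ascr$. Since compactness of an object $X$ is detected by the functor $\Hom_{h\Cscr}(X,-)$ commuting with coproducts, it is a property of the triangulated category $\check{\D}(\Ascr)$; hence $\Cscr$ is compactly generated and $\phi$ carries compact objects to compact objects. One then verifies that $\Cscr_{\geq 0}$ satisfies the hypotheses of the recognition theorem from \cite[Appendix~C]{sag} characterizing $\cDscr(\Ascr)_{\geq 0}$ as the unique compactly generated Grothendieck prestable $\infty$-category with heart $\Ascr$ whose compact connective objects form a full abelian subcategory equivalent to $\Ascr^\omega$, yielding a canonical equivalence $\Cscr_{\geq 0}\we\cDscr(\Ascr)_{\geq 0}$.

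Finally, to lift this to an equivalence $\Cscr\we\cDscr(\Ascr)$ of stable $\infty$-categories, one uses that any stable $\infty$-category with a right-bounded $t$-structure is the filtered colimit of its shifted connective parts along the loop functor, so an equivalence of the connective parts compatible with $\Omega$ extends uniquely to an equivalence of the stabilizations. The main obstacle is the third step: verifying that the transported prestable structure on $\Cscr_{\geq 0}$ really meets Lurie's recognition criterion, and in particular that the subcategory of compact objects in $\Cscr_{\geq 0}$ identified through $\phi$ inherits the abelian-category structure of $\Ascr^\omega$ rather than some spurious coherent-but-distinct small abelian category. The locally coherent assumption is precisely what ensures that this can be read off from the heart plus the compact-generator data visible on the triangulated homotopy category.
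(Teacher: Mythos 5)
The overall skeleton you follow (transport the $t$-structure, upgrade $\Cscr_{\geq 0}$ to a Grothendieck prestable $\infty$-category, identify it with $\cDscr(\Ascr)_{\geq 0}$ by a uniqueness theorem from \cite[Appendix~C]{sag}, then stabilize using right completeness) is the paper's strategy, but the step you call the heart of the proof rests on a recognition theorem that does not exist, and whose natural formulation is false. There is no result in \cite{sag} characterizing $\cDscr(\Ascr)_{\geq 0}$ as ``the unique compactly generated Grothendieck prestable $\infty$-category with heart $\Ascr$ whose compact connective objects [in the heart] are $\Ascr^\omega$.'' Indeed, take $\Ascr=\Mod_R$ with $R=k[x]/x^2$ (locally coherent, even noetherian). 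Both $\Dscr(\Ascr)_{\geq 0}$ and $\cDscr(\Ascr)_{\geq 0}$ are compactly generated Grothendieck prestable $\infty$-categories with heart $\Ascr$, and in both the compact objects lying in the heart are exactly the finitely generated modules $\Ascr^\omega$; yet they are not equivalent, since $R$ is not regular and hence $\check{\D}(\Ascr)=\K(\mathrm{Inj}_\Ascr)\neq\D(\Ascr)$. So the data you propose to read off the triangulated category cannot single out $\cDscr(\Ascr)_{\geq 0}$. The invariant that actually does the work is \emph{anticompleteness} (together with $0$-compliciality): by \cite[C.5.5.20, C.5.8.8]{sag}, $\cDscr(\Ascr)_{\geq 0}$ is the unique anticomplete $0$-complicial Grothendieck prestable $\infty$-category with heart $\Ascr$. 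The nontrivial point is then to verify that the transported connective part $\Cscr_{\geq 0}$ is anticomplete, and this is where local coherence genuinely enters: by \cite[C.6.7.3]{sag} one has $\cDscr(\Ascr)^\omega\we\Dscr^b(\Ascr^\omega)$, so $h\Cscr^\omega$ carries a bounded $t$-structure, $\Cscr_{\geq 0}\we\Ind(\Cscr^\omega_{\geq 0})$, and $\Ind$-completions of bounded $t$-structures have anticomplete connective part \cite[C.5.5.5]{sag}. Note this argument deliberately uses only homotopy-category-level information about $\Cscr^\omega$; if instead you tried to repair your criterion by demanding $\Cscr^\omega\we\Dscr^b(\Ascr^\omega)_{\geq 0}$ as prestable $\infty$-categories, you would be assuming a uniqueness-of-enhancement statement for $\D^b(\Ascr^\omega)$ that is itself a separate theorem of the paper.

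A secondary gap: presentability of $\Cscr_{\geq 0}$ does not follow merely from $\check{\D}(\Ascr)$ having coproducts and a $t$-structure compatible with filtered colimits; accessibility of a transported $t$-structure is not automatic. The paper gets it by first noting $\Cscr$ is presentable (compact generation, Proposition~\ref{prop:presentable}), then identifying $\Cscr^+\we\cDscr^+(\Ascr)$ via the bounded-above uniqueness statement (Proposition~\ref{prop:ba}), so that $\Cscr_{\leq 0}$ is presentable, and finally invoking \cite[1.4.4.13]{ha} to conclude that $\Cscr_{\geq 0}$ is presentable. You should incorporate such an argument rather than asserting presentability of the connective part directly.
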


If additionally $\Ascr$ has enough compact projective objects and each object of
$\Ascr^\omega$ has finite projective dimension, then
$\check{\D}(\Ascr)\we\D(\Ascr)$ (as is implicit in~\cite{krause-noetherian} and
explicit in~\cite[C.5.8.12]{sag}). Thus, in this special case, our result
follows from Theorem~\ref{mt:2} below, which is the $\infty$-categorical
analog of the compactly generated case of~\cite[Theorem~A]{cs-uniqueness}. As far as we are aware, all other
cases are new. One example is $\cD(\QCoh(X))$ where $X=\Spec R$ for a
noetherian but non-regular commutative ring $R$. This is the interesting case since
$\cD(\QCoh(X))$ plays a role in the study of the singularities of $X$ (see for
example~\cite{krause-noetherian} and Section~\ref{sub:sing}).

\begin{maincorollary}\label{maincor:1}
    If $\Ascr$ is a small abelian category, then $\D^b(\Ascr)$ admits a unique
    $\infty$-categorical enhancement.
\end{maincorollary}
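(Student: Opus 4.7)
The plan is to reduce Corollary~\ref{maincor:1} to Theorem~\ref{mt:1} by Ind-completion. Since $\Ascr$ is small abelian, it is automatically idempotent complete: for any idempotent $e : X \to X$, the image $\im(e) \simeq \ker(1-e)$ yields a direct sum decomposition $X \simeq \im(e) \oplus \im(1-e)$. Consequently, the Ind-completion $\Ind(\Ascr)$ is a locally coherent Grothendieck abelian category with $\Ind(\Ascr)^\omega \simeq \Ascr$, and Theorem~\ref{mt:1} provides a unique $\infty$-categorical enhancement $\cDscr(\Ind(\Ascr))$ of $\cD(\Ind(\Ascr))$. Under the standard identification $\cD(\Ind(\Ascr))^\omega \simeq \D^b(\Ascr)$, the compact objects $\Dscr^b(\Ascr) := \cDscr(\Ind(\Ascr))^\omega$ provide a canonical enhancement of $\D^b(\Ascr)$.

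For uniqueness, let $\Cscr$ be any $\infty$-categorical enhancement of $\D^b(\Ascr)$. The bounded $t$-structure on $\D^b(\Ascr)$ lifts to a bounded $t$-structure on $\Cscr$ with heart equivalent to $\Ascr$. I would then form the Ind-completion $\Ind(\Cscr)$, a compactly generated stable $\infty$-category whose compact objects recover $\Cscr$, and extend the $t$-structure to $\Ind(\Cscr)$ by declaring $\Ind(\Cscr)_{\geq 0} := \Ind(\Cscr_{\geq 0})$. This yields an accessible $t$-structure compatible with filtered colimits whose heart should be identified with $\Ind(\Ascr)$.

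The resulting Grothendieck prestable $\infty$-category $\Ind(\Cscr)_{\geq 0}$ is compactly generated with heart $\Ind(\Ascr)$, and it is $0$-complicial: boundedness of the $t$-structure on $\Cscr$ ensures every object of $\Cscr_{\geq 0}$ is built by finite colimits from shifts of objects of $\Ascr$, and this property is preserved by filtered colimits. Invoking the uniqueness theorem from~\cite{sag} for compactly generated $0$-complicial Grothendieck prestable $\infty$-categories with prescribed locally coherent heart --- precisely the input underlying Theorem~\ref{mt:1} --- yields an equivalence $\Ind(\Cscr)_{\geq 0} \simeq \cDscr(\Ind(\Ascr))_{\geq 0}$. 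Stabilizing and restricting to compact objects then gives $\Cscr \simeq \Dscr^b(\Ascr)$.

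The main obstacle is the technical verification that $\Ind(\Cscr)$ meets the hypotheses of Lurie's uniqueness theorem. Three points demand care: the $t$-structure on $\Ind(\Cscr)$ must genuinely extend the one on $\Cscr$ and be compatible with compact generation and filtered colimits; the heart must be identified with $\Ind(\Ascr)$ rather than merely an equivalent abelian category lying between $\Ascr$ and its Ind-completion; and $0$-compliciality must propagate correctly from $\Cscr$ to $\Ind(\Cscr)$. Each should follow from boundedness of the $t$-structure on $\Cscr$ together with the fact that $\Ascr$ generates, but the argument requires careful bookkeeping of $t$-structures under Ind-completion. Once these properties are in place, the equivalence $\Cscr \simeq \Dscr^b(\Ascr)$ is immediate from the uniqueness at the prestable level.
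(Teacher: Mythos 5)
Your overall route is the same as the paper's: lift the bounded $t$-structure from $\D^b(\Ascr)$ to $\Cscr$, pass to $\Ind(\Cscr)$ with the $t$-structure $(\Ind(\Cscr_{\geq 0}),\Ind(\Cscr_{\leq 0}))$, identify the heart with $\Ind(\Ascr)$, check $0$-compliciality, invoke a uniqueness statement at the level of Grothendieck prestable $\infty$-categories, and then recover $\Cscr$ as the compact objects. But the decisive step is wrong as stated. There is no uniqueness theorem for ``compactly generated $0$-complicial Grothendieck prestable $\infty$-categories with prescribed (locally coherent) heart,'' and this is not ``the input underlying Theorem~\ref{mt:1}.'' Indeed such a statement is false: take $\Ascr=\Mod_{\ZZ/p^2}^{\fg}$, so $\Ind(\Ascr)\we\Mod_{\ZZ/p^2}$ is locally coherent. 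Then both $\Dscr(\Mod_{\ZZ/p^2})_{\geq 0}$ and $\cDscr(\Mod_{\ZZ/p^2})_{\geq 0}$ are compactly generated, $0$-complicial, Grothendieck prestable, and have heart $\Mod_{\ZZ/p^2}$, yet they are not equivalent (the first is separated, the second contains the nonzero $\infty$-connective objects coming from acyclic complexes of injectives, since $\ZZ/p^2$ is not regular). This dichotomy is exactly why Theorems~\ref{mt:1} and~\ref{mt:2} are separate theorems, so compact generation plus the heart cannot pin down the prestable category.

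The property that actually does the work is \emph{anticompleteness}: the input to Theorem~\ref{mt:1} is that $\cDscr(\Ind(\Ascr))_{\geq 0}$ is the unique anticomplete $0$-complicial Grothendieck prestable $\infty$-category with heart $\Ind(\Ascr)$ (\cite[C.5.5.19, C.5.8.8]{sag}), and your argument never establishes that $\Ind(\Cscr_{\geq 0})$ is anticomplete. The missing ingredient is available precisely because the $t$-structure on $\Cscr$ is bounded: by \cite[C.5.5.5]{sag}, the Ind-completion of the connective part of a bounded $t$-structure on a small stable $\infty$-category is anticomplete; with that inserted, your outline matches the paper's proof. Two further cautions: you cannot literally ``reduce to Theorem~\ref{mt:1},'' since that would require knowing $h\Ind(\Cscr)\we\cD(\Ind(\Ascr))$ as triangulated categories, which is one of the open questions in Section~\ref{sub:categories} --- one must argue at the prestable level as you (and the paper) do; and the heart identification $\Ind(\Cscr_{\geq 0})^\heart\we\Ind(\Ascr)$ and the $0$-compliciality of $\Ind(\Cscr_{\geq 0})$, which you flag but defer, are genuine verifications carried out in the paper (via \cite[Proposition~2.13]{agh} or an adjoint-functor argument for the heart, and via surjections on $\pi_0$ from heart objects, checked in $\D^b(\Ascr)$ using brutal truncations, for $0$-compliciality).
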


The dg categorical version of this statement was a conjecture
of Bondal--Larsen--Lunts~\cite{bondal-larsen-lunts} in the special case when
$\Ascr\we\Coh(X)$ for $X$ a smooth
projective variety over a field. Their conjecture
was proved in~\cite[Theorem~8.13]{lunts-orlov} in fact for
$\D^b(\Coh(X))$ when $X$ is quasi-projective over a field $k$. It was then
generalized
in~\cite[Corollary~7.2]{cs-uniqueness} to $\D^b(\Coh(X))$ when $X$ is
noetherian and has enough locally free sheaves. Our theorem applies equally
well to non-noetherian coherent examples, situations where there are not enough
locally free sheaves, and even to algebraic stacks. For example, it applies to coherent sheaves on all proper noetherian schemes
over $\ZZ$, where it is not currently known if there are enough locally free sheaves.

Our next theorem is about uniqueness of stable $\infty$-categorical enhancements which we assume additionally to be
presentable (the $\infty$-categorical analog of well-generated).

\begin{maintheorem}\label{mt:2}
    If $\Ascr$ is Grothendieck abelian, then $\D(\Ascr)$ admits a unique presentable
    $\infty$-categorical enhancement.
\end{maintheorem}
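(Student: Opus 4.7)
The plan is to reduce the statement to Lurie's uniqueness theorem for separated $0$-complicial Grothendieck prestable $\infty$-categories, namely that $\Dscr(\Ascr)_{\geq 0}$ is, up to equivalence, the unique such prestable $\infty$-category whose heart is $\Ascr$. So let $\Cscr$ be a presentable stable $\infty$-category equipped with a triangulated equivalence $F\colon h\Cscr\we\D(\Ascr)$; our aim is to upgrade $F$ to an equivalence $\Cscr\we\Dscr(\Ascr)$.

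First I would use $F$ to transport the standard $t$-structure on $\D(\Ascr)$ to one on $h\Cscr$, and then promote this to a $t$-structure on $\Cscr$ via the bijection between $t$-structures on a stable $\infty$-category and on its homotopy category. Because $F$ is an equivalence of ordinary categories, it automatically preserves all small products and coproducts that are visible in $h\Cscr$ and $h\D(\Ascr)$, so the transported connective part is closed under coproducts. Combined with the presentability of $\Cscr$, this presents $\Cscr_{\geq 0}$ as a Grothendieck prestable $\infty$-category in the sense of \cite[Appendix~C]{sag}.

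Next I would check three properties of $\Cscr_{\geq 0}$, each of which transports from $\D(\Ascr)$ via the $t$-exact equivalence $F$. (i) \emph{Heart:} since $F$ is $t$-exact by construction, it induces an equivalence $h(\Cscr^\heart)\we\Ascr$, and because the heart of a stable $\infty$-category is a $1$-category, this upgrades uniquely to an $\infty$-categorical equivalence $\Cscr^\heart\we\Ascr$. (ii) \emph{Separatedness:} the condition $\bigcap_n \Cscr_{\leq -n}=0$ is formulated purely in terms of the triangulated category with its $t$-structure, and holds in $\D(\Ascr)$ by its very definition. (iii) \emph{$0$-compliciality:} the assertion that every $X\in\Cscr_{\geq 0}$ admits a map $Y\to X$ with $Y\in\Cscr^\heart$ and fiber in $\Cscr_{\geq 1}$ can again be read off from the triangulated structure together with the $t$-structure, and hence transports from the manifest $0$-complicial structure on $\Dscr(\Ascr)_{\geq 0}$.

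With these verifications in hand, Lurie's uniqueness theorem produces an equivalence $\Cscr_{\geq 0}\we\Dscr(\Ascr)_{\geq 0}$ of Grothendieck prestable $\infty$-categories extending the restriction of $F$ to connective parts. To conclude, I would pass from the prestable equivalence to a stable one by observing that every object of $\D(\Ascr)$ becomes connective after a sufficient shift, so that each of $\Cscr$ and $\Dscr(\Ascr)$ is exhausted by the $\Sigma^n$-shifts of its connective part and is thereby determined as a stable $\infty$-category by its connective part. The main obstacle I anticipate is really in the preliminary step: showing that the $t$-structure obtained by transport along $F$ is not merely a $t$-structure in the triangulated sense but is actually accessible and presentable, so that $\Cscr_{\geq 0}$ honestly lies in the Grothendieck prestable world to which Lurie's uniqueness theorem applies. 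This is the point at which the presentability hypothesis on $\Cscr$, and the fact that $F$ is an equivalence of the \emph{full} homotopy categories (rather than of some triangulated subcategories), is genuinely used.
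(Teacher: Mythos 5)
Your overall strategy is the right one, and it is the same as the paper's: transport the $t$-structure to $\Cscr$, check that $\Cscr_{\geq 0}$ is a separated $0$-complicial Grothendieck prestable $\infty$-category with heart $\Ascr$, invoke Lurie's uniqueness theorem \cite[C.5.4.5]{sag}, and then pass back to the stable category. But there is a genuine gap exactly at the point you flag as ``the main obstacle'' and then do not resolve: nothing in your argument shows that $\Cscr_{\geq 0}$ is Grothendieck prestable. Closure of $\Cscr_{\geq 0}$ under coproducts is automatic for any $t$-structure (the connective part of a $t$-structure is always closed under all colimits that exist in $\Cscr$), so it carries no content; what is actually needed is (i) accessibility of the transported $t$-structure, i.e.\ presentability of $\Cscr_{\geq 0}$, and (ii) compatibility with filtered colimits, i.e.\ that $\Cscr_{\leq 0}$ is closed under filtered colimits computed in the $\infty$-category $\Cscr$. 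Neither of these is visible in $h\Cscr$: presentability of $\Cscr$ alone does not make an arbitrary $t$-structure accessible, and filtered colimits in $\Cscr$ are not determined by the triangulated equivalence $F$ at all (unlike products and coproducts, which are detected because $\Cscr\to h\Cscr$ preserves them). The paper's proof supplies exactly the missing input: by the uniqueness of bounded-above enhancements (Proposition~\ref{prop:ba}, resting on the universal property \cite[1.3.3.7]{ha}), one first identifies $\Cscr^+\we\Dscr^+(\Ascr)$ as $\infty$-categories; this gives that $\Cscr_{\leq 0}$ is presentable, hence $\Cscr_{\geq 0}$ is presentable by \cite[1.4.4.13]{ha} (this is where presentability of $\Cscr$ enters), and it gives enough $\infty$-categorical control on bounded-above objects to deduce compatibility with filtered colimits via Lemma~\ref{lem:detection}(c). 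Without some such step, the appeal to \cite[C.5.4.5]{sag} is not licensed.

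Your concluding passage is also incorrect as stated: it is false that every object of $\D(\Ascr)$ becomes connective after a sufficient shift, so $\Cscr$ and $\Dscr(\Ascr)$ are \emph{not} exhausted by the shifts $\Cscr_{\geq -n}$; the union of those shifts is only the subcategory of objects bounded on one side. The correct mechanism is right completeness: the transported $t$-structure is right separated and its coconnective part is closed under countable coproducts (both detectable on $h\Cscr$), hence right complete by Proposition~\ref{prop:sepcomplete}, and then $\Cscr\we\Sp(\Cscr_{\geq 0})$ and $\Dscr(\Ascr)\we\Sp(\Dscr(\Ascr)_{\geq 0})$, so the equivalence of connective parts extends to the whole stable categories as in Lemma~\ref{lem:sp}. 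This last point is fixable by a standard argument, but the missing proof that $\Cscr_{\geq 0}$ is Grothendieck prestable is the substantive hole.
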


The differential graded analogue of this theorem was established when
$\D(\Ascr)$ is compactly generated by a set of compact objects of
$\Ascr^\heart$ in~\cite[Theorem~7.5]{lunts-orlov} and in full generality
(and without the presentability caveat) in~\cite{cs-uniqueness}.

\begin{mainremark}
    When $\Ascr$ is a Grothendieck abelian category, all $\infty$-categorical or dg
    enhancements of $\D(\Ascr)$ which arise in practice are manifestly presentable.
    Thus, this additional hypothesis is not a major drawback to the theorem.
    Nevertheless, we explain in Appendix~\ref{sub:appendix} how to
    follow the work of Canonaco and Stellari~\cite{cs-uniqueness} to remove the presentability hypothesis.
    As the purpose of this paper is largely to illustrate the use of prestable
    $\infty$-categories, we do not view this additional generality as the main
    point. Many other statements are given in this paper, for example
    Corollary~\ref{maincor:2} below, which follow from
    Theorem~\ref{mt:2} and would follow, possibly in an easier fashion, from
    the more general result in the appendix. We have found it interesting to
    preserve the arguments flowing from Theorem~\ref{mt:2}.
\end{mainremark}

Given a compactly generated triangulated category $\T$, it turns out that every stable
$\infty$-categorical enhancement of $\T$ is presentable. See
Proposition~\ref{prop:presentable}, which is due to Lurie.

\begin{maincorollary}\label{maincor:2}
    If $\Ascr$ is Grothendieck abelian and $\D(\Ascr)$ is compactly generated, then
    $\D(\Ascr)$ admits a unique $\infty$-categorical enhancement.
\end{maincorollary}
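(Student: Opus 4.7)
The plan is to reduce the statement directly to Theorem~\ref{mt:2} using the presentability fact noted in the paragraph preceding the corollary. Explicitly, let $\Cscr$ be an arbitrary stable $\infty$-categorical enhancement of $\D(\Ascr)$, witnessed by a triangulated equivalence $h\Cscr\we\D(\Ascr)$. Since $\D(\Ascr)$ is compactly generated by hypothesis, Proposition~\ref{prop:presentable} asserts that $\Cscr$ is automatically presentable.

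With presentability in hand, Theorem~\ref{mt:2} applies to $\Cscr$ and produces an equivalence $\Cscr\we\Dscr(\Ascr)$ of stable $\infty$-categories compatible with the given triangulated equivalences on homotopy categories. Because $\Cscr$ was an arbitrary enhancement, this yields uniqueness, and existence of at least one enhancement is witnessed by $\Dscr(\Ascr)$ itself.

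The only substantive input beyond Theorem~\ref{mt:2} is Proposition~\ref{prop:presentable}, and this is where I would expect the difficulty to lie. A proof of that proposition should proceed by lifting a chosen set of compact generators of $\D(\Ascr)$ to objects of $\Cscr$, observing that stability together with the existence of coproducts in $h\Cscr$ lifts to all small colimits in $\Cscr$, and then verifying that the lifted generators remain compact and jointly generate $\Cscr$ under colimits. Compactness transfers between the two levels because filtered colimits in $\Cscr$ descend to filtered colimits in $h\Cscr$ and because morphism sets in $h\Cscr$ are computed as $\pi_0$ of mapping spectra in $\Cscr$, so commutation of $\mathrm{Hom}$ with filtered colimits can be checked indifferently on either side. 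Once Proposition~\ref{prop:presentable} is established, the corollary is an immediate specialization of Theorem~\ref{mt:2}; no further $t$-structural or prestable input is required at this stage.
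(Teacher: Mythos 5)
Your proposal matches the paper's argument exactly: the paper also deduces the corollary by noting that compact generation of $\D(\Ascr)$ forces any enhancement to be presentable via Proposition~\ref{prop:presentable} (which is simply cited to Lurie), and then invokes Theorem~\ref{mt:2}. Your additional sketch of how Proposition~\ref{prop:presentable} would be proved is not needed for the corollary, but it is consistent with the standard argument.
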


Note that we do not assume that $\D(\Ascr)$ be compactly generated by objects of
the heart $\Ascr$.

\begin{mainexample}
    If $X$ is a quasi-compact scheme with affine diagonal, then $\D(\QCoh(X))$ admits a unique
    stable $\infty$-categorical enhancement. Indeed, in this case
    $$\D(\QCoh(X))\we\D_{\qc}(X)$$ by the argument of~\cite[Corollary~5.5]{bokstedt-neeman}
    (which easily adapts from the separated to the affine diagonal situation). But,
    $\D_{\qc}(X)$ is always compactly generated by~\cite[Theorem~3.1.1]{bondal-vdb} when $X$
    is quasi-compact and quasi-separated. In fact, this example extends to many algebraic stacks by
    work of Hall--Rydh and Hall--Neeman--Rydh. In~\cite{hall-rydh}, various
    conditions are given which guarantee that $\D_{\qc}(X)$ is compactly generated. In these
    cases, if $X$ is quasi-compact and has affine diagonal,
    then~\cite[Theorem~1.2]{hall-neeman-rydh} shows that
    $\D(\QCoh(X))\rightarrow\D_{\qc}(X)$ is an equivalence. For example, $\D_{\qc}(X)$ is
    compactly generated when $X$ is quasi-compact with quasi-finite separated diagonal, in
    which case $\D(\QCoh(X))\we\D_{\qc}(X)$ so that $\D_\qc(X)$ admits a unique
    $\infty$-categorical enhancement by Corollary~\ref{maincor:2}.
\end{mainexample}

\begin{mainremark}
    We can also prove uniqueness of $\infty$-categorical enhancements for the derived
    categories $\D(\Mod_A^a)$ of almost module categories studied
    in~\cite{faltings,gabber-ramero} even though they are
    not generally compactly generated. See Example~\ref{ex:almost}.
\end{mainremark}

Our third main theorem is designed to give a criteria for unique enhancements
of small stable $\infty$-categories. We write $\Dscr^b(\Ascr)$ and $\Perfscr(X)$ for the
natural $\infty$-categorical enhancements of $\D^b(\Ascr)$ and $\Perf(X)$ when $\Ascr$ is a
small abelian category and $X$ is a scheme.

Let $\Dscr^b(\ZZ)_{\geq 0}\subseteq\Dscr^b(\ZZ)$ be the full subcategory of connective
objects. By definition, this a prestable $\infty$-category. Additionally, we can recover
$\Dscr^b(\ZZ)$ by formally inverting the suspension, or translation, functor
$\Dscr^b(\ZZ)_{\geq 0}\xrightarrow{\Sigma}\Dscr^b(\ZZ)_{\geq 0}$. That is, there is an
equivalence $$\colim\left(\Dscr^b(\ZZ)_{\geq 0}\xrightarrow{\Sigma}\Dscr^b(\ZZ)_{\geq
0}\xrightarrow{\Sigma}\Dscr^b(\ZZ)_{\geq
0}\rightarrow\cdots\right)\we\Dscr^b(\ZZ).$$ In general, given a prestable
$\infty$-category $\Cscr_{\geq 0}$ (which is by definition pointed and has finite colimits), we can form its {\bf Spanier--Whitehead category}
$$\colim\left(\Cscr_{\geq 0}\xrightarrow{\Sigma}\Cscr_{\geq
0}\xrightarrow{\Sigma}\Cscr_{\geq 0}\rightarrow\cdots\right)=\SWscr(\Cscr_{\geq 0}),$$
which is a stable $\infty$-category.
Another example is if $X$ is a quasi-compact and quasi-separated scheme. In this case,
$$\Perfscr(X)_{\geq 0}=\Perfscr(X)\cap\Dscr_\qc(X)_{\geq 0}$$ is a prestable
$\infty$-category and its Spanier--Whitehead category is $\SWscr(\Perfscr(X)_{\geq
0})\we\Perfscr(X)$ since every perfect complex on a quasi-compact and quasi-separated scheme
is bounded below. Note however that $\Perfscr(X)_{\geq 0}$ is not generally the connective
part of a $t$-structure on $\Perfscr(X)$.

Let $\Cscr_{\geq 0}$ be a prestable $\infty$-category. We let $\Cscr^\heart_{\geq
0}\subseteq\Cscr_{\geq 0}$ be the full subcategory of $0$-truncated
objects.\footnote{Recall that in an $\infty$-category $\Dscr$, an object $Y$ is
$0$-truncated if the mapping space $\Map_\Cscr(X,Y)$ is $0$-truncated for each
$X\in\Dscr$. Equivalently, $\pi_i\Map_\Cscr(X,Y)=0$ for all $X$ and all $i>0$.
Finally, this condition is equivalent to saying that $\Map_\Cscr(X,Y)$ is
homotopy equivalent to a discrete topological space for all $X$.}
In general, $\Cscr^\heart_{\geq 0}$ is simply an additive category. It need not be abelian
or even have cokernels. We say that $\Cscr_{\geq 0}$ is {\bf $0$-complicial} if
for every object $X\in\Cscr_{\geq 0}$ there is an object
$Y\in\Cscr^\heart_{\geq 0}$ and a map $Y\xrightarrow{u} X$ such that the cofiber of $u$, computed in
$\SWscr(\Cscr_{\geq 0})$, is in $\Cscr_{\geq 1}\we\Cscr_{\geq 0}[1]\subseteq\SWscr(\Cscr_{\geq 0})$.
(Inside the large Spanier--Whitehead category $\SWscr(\Ind(\Cscr)_{\geq 0})$, which has a
$t$-structure with connective part $\Ind(\Cscr_{\geq 0})$, the condition on the cofiber of $u$ is
equivalent to saying that $\pi_0\cofib(u)=0$ or equivalently that $\pi_0(u)$ is surjective.) For example, if $\Ascr$ is a small abelian category, then $\Dscr^b(\Ascr)_{\geq 0}$ is
$0$-complicial. If $X=\Spec R$ is an affine scheme, then $\Perfscr(X)_{\geq 0}$ is
$0$-complicial.

\begin{maintheorem}\label{mt:3}
    Let $\Cscr_{\geq 0}$ be a small prestable $\infty$-category. If $\Cscr_{\geq 0}$ is
    $0$-complicial, then the triangulated category $h\SWscr(\Cscr_{\geq 0})$ admits a unique $\infty$-categorical
    enhancement.
\end{maintheorem}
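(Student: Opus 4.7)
The plan is to reduce to the presentable case via Ind-completion and apply Theorem~\ref{mt:2}. Let $\Dscr$ be a stable $\infty$-categorical enhancement of $h\SWscr(\Cscr_{\geq 0})$; silently idempotent completing if necessary, the goal is to produce an equivalence $\Dscr\we\SWscr(\Cscr_{\geq 0})$ of stable $\infty$-categories. First, I pass to Ind-completions: $\Ind(\Dscr)$ is a compactly generated presentable stable $\infty$-category with compacts $\Dscr$, and Ind commutes with the sequential colimit defining Spanier--Whitehead, giving $\Ind(\SWscr(\Cscr_{\geq 0}))\we\Sp(\Ind(\Cscr_{\geq 0}))$. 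Now $\Ind(\Cscr_{\geq 0})$ is a Grothendieck prestable $\infty$-category which inherits $0$-compliciality from $\Cscr_{\geq 0}$ (essentially the content of the parenthetical in the definition above), and its heart $\Ascr$ is Grothendieck abelian by Lurie's Gabriel--Popescu theorem. Provided $\Ind(\Cscr_{\geq 0})$ is separated, Lurie's uniqueness theorem~\cite[Appendix~C]{sag} characterizing $\Dscr(\Ascr)_{\geq 0}$ as the unique separated $0$-complicial Grothendieck prestable $\infty$-category with heart $\Ascr$ gives $\Ind(\Cscr_{\geq 0})\we\Dscr(\Ascr)_{\geq 0}$, and hence $\Ind(\SWscr(\Cscr_{\geq 0}))\we\Dscr(\Ascr)$.

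Next, I argue that $\Ind(\Dscr)$ is a presentable enhancement of $h\Dscr(\Ascr)$. The triangulated category $h\Ind(\Dscr)$ is compactly generated with compacts $h\Dscr\we h\SWscr(\Cscr_{\geq 0})$, and $h\Dscr(\Ascr)$ is compactly generated with the same compacts; since a compactly generated triangulated category is determined by its full subcategory of compact objects (via triangulated Ind-completion), there is a canonical equivalence $h\Ind(\Dscr)\we h\Dscr(\Ascr)$. Theorem~\ref{mt:2} then forces $\Ind(\Dscr)\we\Dscr(\Ascr)\we\Ind(\SWscr(\Cscr_{\geq 0}))$ as presentable stable $\infty$-categories, and restricting to compact objects produces the desired equivalence $\Dscr\we\SWscr(\Cscr_{\geq 0})$.

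The principal technical obstacle is the upgrade from the given triangulated equivalence $h\Dscr\we h\SWscr(\Cscr_{\geq 0})$ on the small level to a triangulated equivalence $h\Ind(\Dscr)\we h\Dscr(\Ascr)$ on the compactly generated Ind-completions, which is the precise input Theorem~\ref{mt:2} needs; this relies on the fact that compactly generated triangulated categories are intrinsically determined by their subcategories of compact objects, together with the identification of compacts of $\Ind(\Dscr)$ with (the idempotent completion of) $\Dscr$. A secondary subtlety is verifying separation of $\Ind(\Cscr_{\geq 0})$; should it fail, the remedy is to identify $\Ind(\Dscr)$ instead with the unseparated derived category $\cDscr(\Ascr)$ and invoke Theorem~\ref{mt:1} in place of Theorem~\ref{mt:2}, at the cost of requiring $\Ascr$ to be locally coherent.
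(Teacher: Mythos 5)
There is a genuine gap, and it sits exactly at the step you describe as the ``principal technical obstacle.'' You assert that a compactly generated triangulated category is determined by its full subcategory of compact objects via ``triangulated Ind-completion,'' and use this to promote the given triangulated equivalence $h\Dscr\we h\SWscr(\Cscr_{\geq 0})$ to a triangulated equivalence $h\Ind(\Dscr)\we h\Dscr(\Ascr)$, which is the input Theorem~\ref{mt:2} requires. No such principle is available: there is no Ind-construction at the level of triangulated categories (cones are not functorial), and the question of whether $h\Cscr\we h\Dscr$ for small stable $\infty$-categories implies $h\Ind(\Cscr)\we h\Ind(\Dscr)$ is recorded as an open question in Section~\ref{sub:categories} of this paper; a positive answer would also simplify the removal of presentability in Theorem~\ref{mt:1}, which is instead done by a substantial argument in Appendix~\ref{sub:appendix}. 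The proof of Theorem~\ref{thm:3} is designed precisely to circumvent this: the only data one can transport along the homotopy-category equivalence is $1$-categorical, namely the equivalence of hearts $F^\heart\colon\Cscr^\heart\we\Dscr^\heart$ and the functors $X\mapsto\Hom_{h\Cscr}(X,Y[n])$. One then exhibits $\Ind(\Cscr)$ and $\Ind(\Dscr)$ as left exact localizations of the same $\Pscr_\Sigma(\Cscr^\heart)$ via the $\infty$-categorical Gabriel--Popescu theorem and proves, by induction over filtrations with graded pieces $U_{Y}[n]$, that the two localizations invert the same strongly saturated class of morphisms. That comparison of localizations is the real content of the theorem, and your appeal to a triangulated Ind-completion skips it.

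A secondary problem is the separatedness hypothesis you flag. In general $\Ind(\Cscr_{\geq 0})$ is not separated: for $\Cscr_{\geq 0}=\Dscr^b(\Ascr_0)_{\geq 0}$ with $\Ascr_0$ a small abelian category of infinite cohomological dimension, $\Ind(\Cscr_{\geq 0})\we\cDscr(\Ind(\Ascr_0))_{\geq 0}$ is anticomplete and not separated, so the identification $\Ind(\Cscr_{\geq 0})\we\Dscr(\Ascr)_{\geq 0}$ fails; and your fallback through Theorem~\ref{mt:1} imposes local coherence on the heart, which is not among the hypotheses of Theorem~\ref{mt:3}. Worse, for examples such as $\Perfscr(X)_{\geq 0}$ the Ind-completion need be neither separated nor anticomplete, so neither uniqueness theorem for Grothendieck prestable $\infty$-categories applies as stated. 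Even granting a resolution of this, the main gap above remains: you would still need an argument, and not a general principle, to get from the equivalence of small homotopy categories to an equivalence of the big ones.
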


In the case of $\Ext$-finite triangulated categories over a field, Muro has existence and uniqueness
results for projective modules over certain so-called basic algebras in~\cite{muro-first,muro-enhanced}.

Let $X$ be a quasi-compact and quasi-separated scheme.
We say that $X$ is $0$-complicial if the small prestable $\infty$-category
$\Perfscr(X)_{\geq 0}=\Perfscr(X)\cap\Dscr_{\qc}(X)_{\geq 0}$ is $0$-complicial.
Note that $\Perf(X)_{\geq 0}\cap\Dscr_{\qc}(X)^\heart\subseteq\Perfscr(X)_{\geq
0}^\heart$, but that in general we expect that this inclusion is strict. If $X$ is quasi-compact with
affine diagonal and has enough
locally free sheaves or more generally enough perfect quasi-coherent sheaves (meaning
that $\QCoh(X)$ is generated by perfect quasi-coherent sheaves), then it is $0$-complicial.

\begin{maincorollary}\label{maincor:3}
    If $X$ is quasi-compact, quasi-separated, and $0$-complicial, then $\Perf(X)$ admits a unique
    $\infty$-categorical enhancement.
\end{maincorollary}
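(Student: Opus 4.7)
The plan is to apply Theorem~\ref{mt:3} directly to the prestable $\infty$-category $\Cscr_{\geq 0} = \Perfscr(X)_{\geq 0}$. Three things must be checked in order to invoke the theorem: smallness, prestability, and $0$-compliciality. Smallness is standard: since $X$ is quasi-compact and quasi-separated, $\Perfscr(X)$ is essentially small (for instance as the compact objects of the presentable $\infty$-category $\Dscr_{\qc}(X)$), and hence so is its full subcategory $\Perfscr(X)_{\geq 0}$. The $0$-compliciality is built into the hypothesis that $X$ be $0$-complicial. Finally, as noted in the discussion preceding the corollary, one has $\SWscr(\Perfscr(X)_{\geq 0}) \we \Perfscr(X)$ because every perfect complex on a qcqs scheme is bounded below, so $h\SWscr(\Perfscr(X)_{\geq 0}) = \Perf(X)$ and Theorem~\ref{mt:3} furnishes the desired uniqueness.

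The one verification not already spelled out in the text is that $\Perfscr(X)_{\geq 0}$ really is prestable. It is pointed by $0$ and the suspension functor is fully faithful because it is the restriction of the equivalence $\Sigma : \Perfscr(X) \to \Perfscr(X)$. Finite coproducts and cofibers exist: computed in the ambient stable category $\Perfscr(X)$, they remain connective since $\Dscr_{\qc}(X)_{\geq 0}$ is closed under finite colimits in $\Dscr_{\qc}(X)$, and they remain perfect by stability of $\Perfscr(X)$. Given a morphism $f : Y \to \Sigma Z$ with $Y, Z \in \Perfscr(X)_{\geq 0}$, the fiber computed in $\Perfscr(X)$ is perfect and lies in $\Dscr_{\qc}(X)_{\geq 0}$ (using that $\Sigma Z \in \Dscr_{\qc}(X)_{\geq 1}$), hence in $\Perfscr(X)_{\geq 0}$. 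This confirms the fiber axiom of \cite[C.1.2.1]{sag}.

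With these bookkeeping points in hand, the corollary is essentially a translation of Theorem~\ref{mt:3}. The substantive difficulty here is not in the corollary itself but is absorbed into that theorem: namely, showing that an arbitrary $\infty$-categorical enhancement of the triangulated category $\Perf(X)$ can be recovered from the prestable enhancement $\Perfscr(X)_{\geq 0}$, which is the role played by $0$-compliciality. For the corollary, the only real content beyond applying the theorem is verifying the $0$-compliciality hypothesis in the cases of interest, as discussed in the paragraph immediately preceding the statement.
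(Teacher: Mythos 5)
Your proposal is correct and takes essentially the same route as the paper, which deduces the corollary by applying Theorem~\ref{mt:3} (Theorem~\ref{thm:3} in the body) to the small prestable $\infty$-category $\Perfscr(X)_{\geq 0}$, using $\SWscr(\Perfscr(X)_{\geq 0})\we\Perfscr(X)$ and the hypothesis that $X$ is $0$-complicial; your prestability check is the same bookkeeping the paper handles via closure under finite colimits and extensions in $\Perfscr(X)$. The only point worth adding is that the body statement also assumes idempotent completeness, which holds here since a retract of a connective perfect complex in $\Dscr_{\qc}(X)$ is again connective and perfect, so $\Perfscr(X)_{\geq 0}$ satisfies all the hypotheses.
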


Lunts and Orlov proved in~\cite[Theorem~7.9]{lunts-orlov} that if $X$ is
quasi-projective over a field, then $\Perf(X)$ has a unique dg enhancement.
In~\cite[Proposotion~6.10]{cs-uniqueness}, Canonaco and Stellari proved that
$\Perf(X)$ has a unique dg enhancement whenever $X$ is a noetherian
concentrated stack (i.e., $\Perf(X)\subseteq\D_{\qc}(X)^\omega$)
with quasi-finite affine diagonal and enough perfect quasi-coherent sheaves.
Our result for example removes the noetherianity hypotheses from these theorems.

There is a related corollary, of which Corollary~\ref{maincor:3} is a special
case when $X$ additionally has affine diagonal.

\begin{maincorollary}\label{maincor:4}
    If $\Ascr$ is a Grothendieck abelian category such that $\D(\Ascr)$ is compactly
    generated and $\Dscr(\Ascr)_{\geq 0}\cap\Dscr(\Ascr)^\omega$ is $0$-complicial, then
    $\D(\Ascr)^\omega$ admits a unique $\infty$-categorical enhancement.
\end{maincorollary}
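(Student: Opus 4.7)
The plan is to apply Theorem~\ref{mt:3} to the small prestable $\infty$-category
\[\Cscr_{\geq 0}:=\Dscr(\Ascr)_{\geq 0}\cap\Dscr(\Ascr)^\omega.\]
This is closed under finite colimits in $\Dscr(\Ascr)_{\geq 0}$ because compact and connective objects each are, hence small prestable; and it is $0$-complicial by hypothesis. Theorem~\ref{mt:3} then furnishes a unique $\infty$-categorical enhancement of the triangulated category $h\SWscr(\Cscr_{\geq 0})$, so the result will follow once I identify $h\SWscr(\Cscr_{\geq 0})\we\D(\Ascr)^\omega$.

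Since $\Dscr(\Ascr)^\omega$ is stable, the inclusion $\Cscr_{\geq 0}\hookrightarrow\Dscr(\Ascr)^\omega$ extends by the universal property of the Spanier--Whitehead construction to a canonical functor $\Phi\colon\SWscr(\Cscr_{\geq 0})\to\Dscr(\Ascr)^\omega$. Using the sequential colimit formula for mapping spaces in $\SWscr(\Cscr_{\geq 0})$ together with full faithfulness of the inclusion $\Cscr_{\geq 0}\hookrightarrow\Dscr(\Ascr)$, each colimit is already represented at finite stages by a mapping space in $\Dscr(\Ascr)$; hence $\Phi$ is fully faithful with essential image the full subcategory of compact objects lying in $\Sigma^{-n}\Cscr_{\geq 0}$ for some $n\geq 0$, i.e., the bounded-below compact objects of $\Dscr(\Ascr)$.

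The main obstacle is essential surjectivity of $\Phi$, equivalently the claim that every compact object of $\Dscr(\Ascr)$ is bounded below for the standard $t$-structure. Because this $t$-structure is accessible (so $\tau_{\geq 0}$ commutes with filtered colimits), one first checks that the compact objects of $\Dscr(\Ascr)_{\geq 0}$ coincide with $\Cscr_{\geq 0}$. Combining the $0$-complicial hypothesis on $\Cscr_{\geq 0}$ with the compact generation of $\Dscr(\Ascr)$ and the Grothendieck prestable machinery of~\cite[Appendix~C]{sag}, I would then argue that the natural fully faithful functor $\Ind(\Cscr_{\geq 0})\to\Dscr(\Ascr)_{\geq 0}$ is an equivalence of Grothendieck prestable $\infty$-categories. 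Stabilizing, and using that the Spanier--Whitehead construction commutes with Ind-completion, yields $\Ind(\SWscr(\Cscr_{\geq 0}))\we\Dscr(\Ascr)$; passing to compact objects on both sides and invoking idempotent completeness of $\Dscr(\Ascr)^\omega$ gives $\SWscr(\Cscr_{\geq 0})\we\Dscr(\Ascr)^\omega$. Taking homotopy categories and applying Theorem~\ref{mt:3} then concludes the proof.
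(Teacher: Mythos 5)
Your overall strategy is the one intended for this corollary: apply Theorem~\ref{mt:3} to $\Cscr_{\geq 0}=\Dscr(\Ascr)_{\geq 0}\cap\Dscr(\Ascr)^\omega$ (which is indeed small, prestable, $0$-complicial by hypothesis, and also idempotent complete, being closed under retracts in $\Dscr(\Ascr)^\omega$ --- a hypothesis of Theorem~\ref{thm:3} you should record), and then identify $h\SWscr(\Cscr_{\geq 0})$ with $\D(\Ascr)^\omega$. The gap is in how you propose to get essential surjectivity. You correctly reduce it to the claim that every compact object of $\Dscr(\Ascr)$ is bounded below, but you then route this through the assertion that $\Ind(\Cscr_{\geq 0})\rightarrow\Dscr(\Ascr)_{\geq 0}$ is an equivalence, i.e.\ that $\Dscr(\Ascr)_{\geq 0}$ is compactly generated by the connective compact objects. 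No argument is given for this, and it is a genuinely stronger and more delicate statement than anything the corollary needs: compact generation of the stable category $\Dscr(\Ascr)$ only produces generators up to arbitrary shifts, and $0$-compliciality of $\Cscr_{\geq 0}$ does not obviously let you exhibit an arbitrary connective object (even an object of the heart $\Ascr$) as a filtered colimit of connective compacts. Compare Section~\ref{sub:qcoh}, where in the closely parallel situation it is explicitly left open (and expected to fail in general) whether $\Ind(\Perfscr(X)_{\geq 0})\subseteq\Dscr_\qc(X)_{\geq 0}$ is an equivalence; statements of this shape are precisely what the present argument is designed to avoid. Also, a small slip: the identification of the compact objects of $\Dscr(\Ascr)_{\geq 0}$ with $\Cscr_{\geq 0}$ uses compatibility of the $t$-structure with filtered colimits (so that $\tau_{\geq 0}$ commutes with them), not accessibility.

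The needed fact has a direct proof that bypasses $\Ind$ entirely. The standard $t$-structure on $\Dscr(\Ascr)$ is right complete, so every object satisfies $X\we\colim_n\tau_{\geq -n}X$, a sequential (hence filtered) colimit. If $X$ is compact, the identity of $X$ factors through some $\tau_{\geq -n}X$, so $X$ is a retract of $\tau_{\geq -n}X$; since $\Dscr(\Ascr)_{\geq -n}$ is closed under retracts, $X\in\Dscr(\Ascr)_{\geq -n}$. Thus every compact object is a desuspension of an object of $\Cscr_{\geq 0}$, your fully faithful functor $\Phi$ is essentially surjective, $h\SWscr(\Cscr_{\geq 0})\we\D(\Ascr)^\omega$, and Theorem~\ref{thm:3} concludes. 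With this replacement (and the idempotent-completeness remark above) your proposal becomes the paper's argument.
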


We also have the following corollary, a special case of which appears as
Proposition~\ref{prop:ba} and is needed to establish
Theorems~\ref{mt:1} and~\ref{mt:2}. If $\Ascr$ is an abelian category, let
$\D^-(\Ascr)$ and $\D^+(\Ascr)$ denote the bounded below and bounded above
derived categories of $\Ascr$, respectively.

\begin{maincorollary}\label{maincor:5}
    If $\Ascr$ is a small abelian category, then
    $\D^-(\Ascr)$ and $\D^+(\Ascr)$ admit unique $\infty$-categorical enhancements.
\end{maincorollary}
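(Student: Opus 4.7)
The plan is to deduce both statements from Theorem~\ref{mt:3}, handling $\D^+(\Ascr)$ via a duality argument that reduces it to the $\D^-$ case applied to $\Ascr^\op$.

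For $\D^-(\Ascr)$, I would take $\Cscr_{\geq 0}$ to be the (essentially small) prestable $\infty$-category obtained by localizing the dg nerve of the category of non-negatively graded chain complexes in $\Ascr$ at quasi-isomorphisms; essential smallness is immediate because $\Ascr$ is small. Its heart $\Cscr_{\geq 0}^\heart$ is $\Ascr$, and $\Cscr_{\geq 0}$ is $0$-complicial: any $X\in\Cscr_{\geq 0}$ is represented by a complex $X_\bullet$ concentrated in non-negative degrees, and the inclusion of the $0$-th term yields a map $u\colon X_0\to X$ in $\Cscr_{\geq 0}$ whose effect on $\pi_0$ is the surjection $X_0\twoheadrightarrow X_0/\im(d_1)\we\H_0(X)$. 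Moreover, $\SWscr(\Cscr_{\geq 0})\we\Dscr^-(\Ascr)$, because formally inverting $\Sigma$ on connective complexes yields exactly the bounded-below objects of $\Dscr(\Ascr)$. Theorem~\ref{mt:3} then produces a unique $\infty$-categorical enhancement of $\D^-(\Ascr)$.

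For $\D^+(\Ascr)$, I would apply the above to the small abelian category $\Ascr^\op$, obtaining a unique enhancement of $\D^-(\Ascr^\op)$. The canonical equivalence $\Dscr(\Ascr^\op)\we\Dscr(\Ascr)^\op$ exchanges the standard $t$-structures and so identifies $\Dscr^-(\Ascr^\op)\we\Dscr^+(\Ascr)^\op$. Since $\Cscr\mapsto\Cscr^\op$ bijects stable $\infty$-categorical enhancements of a triangulated category $\T$ with those of $\T^\op$, uniqueness for $\D^-(\Ascr^\op)$ transfers to uniqueness for $\D^+(\Ascr)$.

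The main obstacle is the rigorous verification of the identification $\SWscr(\Cscr_{\geq 0})\we\Dscr^-(\Ascr)$. Morally every bounded-below object is a desuspension of a connective one, but one should invoke the universal property of $\SWscr$ as the initial stable $\infty$-category receiving an exact functor from $\Cscr_{\geq 0}$, and then match it against the bounded-below part of the standard $t$-structure on $\Dscr(\Ascr)$; the remaining verifications (smallness, $0$-compliciality, and the opposite-category identifications) should be routine.
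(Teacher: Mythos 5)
Your proposal is correct and follows essentially the same route as the paper: realize $\D^-(\Ascr)$ as $h\SWscr(\Dscr^-(\Ascr)_{\geq 0})$, verify $0$-compliciality via the degree-zero (brutal) truncation map, apply Theorem~\ref{mt:3}, and deduce the $\D^+(\Ascr)$ case from $\Dscr^+(\Ascr)\we(\Dscr^-(\Ascr^\op))^\op$. The paper's proof is exactly this, stated more briefly.
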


As far as we can see, there are no antecedents to this result in the
literature. Corollary~\ref{maincor:5} gives an answer to a variant
of~\cite[Question~4.7]{cs-tour}. They ask if $\D(\QCoh(X))^\kappa$ admits a
unique dg enhancement when $X$ is an algebraic stack and $\kappa$ is
sufficiently large. Let $\Ascr$ be a Grothendieck abelian category. For $\kappa$ sufficiently large, $\Ascr^\kappa$ is an
abelian category and Krause showed in the main theorem of~\cite{krause-deriving} 
that $\D(\Ascr^\kappa)\we\D(\Ascr)^\kappa$. It follows from the corollary that
$\D(\Ascr)^{\kappa,-}$, the category of bounded below objects of
$\D(\Ascr)^{\kappa}$, admits a unique $\infty$-categorical enhancement, and
similarly for the bounded above derived category.

\begin{mainremark}
    The theorems and corollaries above apply to triangulated categories such as $\D(\QCoh_Z(X))$
    or $\Perf_Z(X)$, where $\QCoh_Z(X)$ is the Grothendieck abelian category of quasi-coherent sheaves
    supported (set theoretically) on a closed subscheme $Z$ of $X$ with
    quasi-compact complement. They also apply to the twisted versions $\D_{\qc}(X,\alpha)$
    where $\alpha\in\H^2_\et(X,\Gm)$ is a (possibly non-torsion) cohomological Brauer class.
    We leave these extensions to the interested reader.
\end{mainremark}

\begin{mainremark}\label{mr:cns}
    Since the first draft of this paper appeared, Canonaco, Neeman, and
    Stellari have made a big advance, settling many of the open
    problems of this paper in~\cite{cns}. Besides giving new proofs of
    Corollaries~\ref{maincor:1} and~\ref{maincor:5}, they prove that $\D(A)$
    admits a unique dg enhancement for $A$ {\em any} abelian category
    (strengthening our Theorem~\ref{mt:2}), they prove
    that $\check{\D}(A)$ and $\hat{\D}(A)$ admits unique dg enhancements for any
    Grothendieck abelian category (strengthening our Theorem~\ref{mt:1}), and they prove that $\D_\qc(X)$ and
    $\Perf(X)$ admit unique dg enhancements for any quasi-compact and
    quasi-separated scheme $X$ (strengthening our Corollary~\ref{maincor:3}).
    Forthcoming work of Jack Hall explains how to identify left complete
    $t$-structure from the triangulated category, so it is possible to give a
    proof of unique presentable $\infty$-categorical enhancements of $\widehat{\D}(A)$ in the spirit of
    this paper. The argument of~\cite{cns} for unique models for $\Perf(X)$ and
    $\D_\qc(X)$ is new and involves a local-global argument which would be nice
    to return to in our setting.
\end{mainremark}

In general, it can happen that a triangulated category admits multiple dg enhancements
but a unique stable $\infty$-categorical enhancement (we give an example due to Dugger and
Shipley~\cite{dugger-shipley-topological} in
Example~\ref{ex:dugger-shipley}). This does not occur in the situations above because the
presence of a $0$-complicial $t$-structure guarantees the existence of a canonical $\ZZ$-linear enrichment. 

\begin{mainmetatheorem}\label{meta:dg}
    In all of the cases above, the triangulated categories admit unique dg enhancements.
\end{mainmetatheorem}

In Section~\ref{sub:nstable} we conjecture the
existence of a theory of stable $n$-categories and exact functors for each
$1\leq n\leq\infty$ and we give a
conjecture on a stable $n$-categorical analogue of Theorem~\ref{mt:2}. 
The $n=1$ theory is that of triangulated categories and exact functors and the
$n=\infty$ theory is that of stable $\infty$-categories and exact functors.
A typical stable $n$-category is the $n$-homotopy category $h_{n-1}\Cscr$ where $\Cscr$ is a stable
$\infty$-category (where our previous notation $h\Cscr$ agrees with $h_0\Cscr$).
One problem is to define intrinsic to $n$-categories what a stable $n$-category
should be via a list of axioms similar to those for a triangulated category.

%

We postpone further discussion to Section~\ref{sec:examples}, where
we give historical remarks, examples, questions, and propose several more conjectures.
Between now and then, we give background on stable and prestable
$\infty$-categories in Sections~\ref{sec:stable} and~\ref{sec:prestable}.
Section~\ref{sec:ba} gives a uniqueness statement for $\infty$-categorical
enhancements of bounded above derived categories which we will use to start our
arguments. In Section~\ref{sec:detection} we give a detection lemma saying
certain properties of $\infty$-categorical enhancements can be detected on the homotopy category.
Section~\ref{sec:proofs} contains the proofs of Theorems~\ref{mt:1},~\ref{mt:2},
and~\ref{mt:3}. In Section~\ref{sec:meta} we say something about Meta
Theorem~\ref{meta:dg} and we end with Appendix~\ref{sub:appendix} which removes
presentability from the statement of Theorem~\ref{mt:2}.

\paragraph{Conventions.} Throughout, we use homological indexing conventions. We also work
everywhere with $\infty$-categories. An $\infty$-category $\Cscr$ is canonically enriched in
spaces: if $X,Y\in\Cscr$, we write $\Map_\Cscr(X,Y)$ for the mapping space from $X$ to $Y$.
If $\Cscr$ is stable, then there is a canonical enrichment in spectra. We write
$\MapSp_\Cscr(X,Y)$ for the mapping spectrum. These are related by
$\Omega^\infty\MapSp_\Cscr(X,Y)\we\Map_\Cscr(X,Y)$. Thus,
$\pi_i\MapSp_\Cscr(X,Y)\iso\pi_i\Map_\Cscr(X,Y)$ for $i\geq 0$. We write
$h\Cscr$ for the homotopy category of an $\infty$-category. We have
$\pi_0\Map_\Cscr(X,Y)\iso\Hom_{h\Cscr}(X,Y)$. We will typically use script
letters such as $\Cscr$, $\Dscr_{\qc}(X)$, or $\Perfscr(X)$ to denote
$\infty$-categories and roman letters such as $\T$ or $\Perf(X)$ to denote
triangulated categories. One exception is that we typically write $\Ascr$ for
an abelian category.

\paragraph{Acknowledgments.}
This paper was conceived during a talk by Alice Rizzardo on uniqueness of
enhancements at Neeman's birthday conference in Bielefeld
in the summer of 2017. Rizzardo spoke about joint work~\cite{rizzardo-vdb} with
Michel van den Bergh
producing an example of a $k$-linear triangulated category for some field $k$
with multiple non-equivalent dg enhancements. This sparked our interest in the question in
general.

We would like to thank the following people for their helpful comments
and conversations: Jack Hall, Amnon Neeman, George Raptis, Nick Rozenblyum, Stefan Schwede, Jay
Shah, Greg Stevenson, and Wenliang Zhang. We thank Marc Hoyois, Irakli
Patchkoria, Greg Stevenson,
and Marco Varisco for providing additional bibliographic information and also
for pointing out some typos. Akhil Mathew found a mistake in the original proof
of Proposition~\ref{prop:complete} and Niko Naumann and Irakli Patchkoria pointed out that
Conjecture~\ref{conj:non} is true; we thank them both. Finally, and most importantly, we are indebted to Alberto Canonaco
and Paolo Stellari for pointing out a gap in our original ``proof''
of Theorem~\ref{mt:3} and for extensive feedback on the new proof. This work was supported by NSF Grant DMS-1552766.

\section{$\infty$-categorical enhancements}\label{sec:stable}

A stable $\infty$-category is a pointed $\infty$-category $\Cscr$ with finite
limits and finite colimits and such that a commutative square
$$\xymatrix{W\ar[r]\ar[d]&X\ar[d]\\Y\ar[r]&Z}$$ is a pushout if and only if it
is a pullback.

If $\Cscr$ is stable, then the homotopy category $h\Cscr$ is canonically
triangulated (\cite[1.1.2.14]{ha}). Note however that being stable is a {\em property} of an
$\infty$-category as opposed to extra structure. We will use stable
$\infty$-categories in this paper as the natural models of triangulated
categories. Other possible models are stable simplicial model categories and
dg categories; both are captured by using stable
$\infty$-categories (see~\cite[Appendix~A.2]{htt} for simplicial model
categories and~\cite[Section~1.3.1]{ha} for dg categories). 
We will assume familiarity with Lurie's work on higher categories,
especially~\cite{htt,ha,sag}.

\begin{definition} Let $\T$ be a triangulated category.
    We say that $\T$ {\bf admits an $\infty$-categorical
    enhancement} if there is a stable $\infty$-category
    $\Cscr$ and a triangulated equivalence $h\Cscr\we\T$. If
    $\Cscr$ is unique up to equivalence of $\infty$-categories, we say
    that $\T$ {\bf admits a unique $\infty$-categorical enhancement}.
\end{definition}

\begin{variant}
    We say that $\T$ {\bf admits a presentable $\infty$-categorical
    enhancement} if there is a stable presentable $\infty$-category $\Cscr$ and
    a triangulated equivalence $h\Cscr\we\T$. If
    $\Cscr$ is unique up to equivalence of $\infty$-categories, in the sense
    that if $\Dscr$ is a second stable {\em presentable} $\infty$-category such
    that $h\Dscr\we\T$, then $\Cscr\we\Dscr$, then we say
    that $\T$ {\bf admits a unique presentable $\infty$-categorical enhancement}.
\end{variant}

Basically all triangulated categories with small coproducts that appear in algebra,
homotopy theory, and algebraic geometry admit presentable $\infty$-categorical
models. This paper is about uniqueness.


\begin{definition}
    Let $\T$ be a triangulated category which admits small coproducts.
    A set of objects $\{X_i\}$ in $\T$ {\bf generates} $\T$ if the following
    condition holds: if $Y\in\T$ satisfies $\Hom_\T(X_i,Y[n])=0$ for all $X_i$
    and $n\in\ZZ$, then $Y\we 0$.
\end{definition}

\begin{definition}
    Let $\T$ be a triangulated category with all small coproducts. An object
    $X\in\T$ is {\bf compact} (or
    $\omega$-compact) if for all coproducts $\coprod_{i\in I}Y_i$ the natural
    map $$\coprod_{i\in I}\Hom_\T(X,Y_i)\rightarrow\Hom_\T(X,\coprod_{i\in
    I}Y_i)$$ is a bijection. We let $\T^\omega\subseteq\T$ be the full
    subcategory of compact objects, which inherits a triangulated structure
    from $\T$. A triangulated category $\T$ is {\bf compactly
    generated} if it is
    locally small, has all small coproducts, and is generated by $\T^\omega$.
\end{definition}

\begin{proposition}[Lurie]\label{prop:presentable}
    Suppose that $\T$ is compactly generated and admits an $\infty$-categorical
    model $\Cscr$. Then, $\Cscr$ is presentable.
\end{proposition}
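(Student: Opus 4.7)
The plan is to verify that $\Cscr$ is $\omega$-presentable by checking: (i) $\Cscr$ admits all small colimits; (ii) $\Cscr^\omega$ is essentially small; (iii) $\Cscr^\omega$ generates $\Cscr$ in the $\infty$-categorical sense. Given (i)--(iii), results of Lurie in~\cite{ha} identify $\Cscr\simeq\Ind(\Cscr^\omega)$, which is presentable.

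The central step is showing that $\Cscr$ admits small coproducts; together with stability and finite colimits, this yields all small colimits. Given a family $\{Y_i\}_{i\in I}$ in $\Cscr$, I would pick $Z\in\Cscr$ whose class in $\T$ is $\bigoplus_i Y_i$, and lift the canonical inclusions $Y_i\to\bigoplus_i Y_i$ to maps $Y_i\to Z$ in $\Cscr$, which is possible since $\pi_0\Map_\Cscr(Y_i,Z)\cong\Hom_\T(Y_i,Z)$. The induced map of mapping spectra $\MapSp_\Cscr(Z,X)\to\prod_i\MapSp_\Cscr(Y_i,X)$ becomes, on each $\pi_n$, the coproduct bijection $\Hom_\T(\bigoplus_i Y_i,X[n])\cong\prod_i\Hom_\T(Y_i,X[n])$ in $\T$. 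Since an equivalence of spectra is detected on homotopy groups, $Z$ satisfies the $\infty$-categorical coproduct universal property.

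The remaining hypotheses are formal consequences. Computing filtered colimits in $\Cscr$ via the standard telescope construction (available now that $\Cscr$ has all small colimits) shows that filtered colimits in $\Cscr$ project to triangulated homotopy colimits in $\T$. Hence $X\in\Cscr$ is $\omega$-compact if and only if its image in $\T$ is, since on $\pi_n$ the two compactness conditions match. This gives both that $\Cscr^\omega\to\T^\omega$ is essentially surjective and that $\Cscr^\omega$ is essentially small (as $\T^\omega$ is, using that $\T$ is locally small and compactly generated). Finally, $\Cscr^\omega$ generates $\Cscr$ in the $\infty$-categorical sense: vanishing of $\MapSp_\Cscr(c,Y)$ for every $c\in\Cscr^\omega$ is equivalent via $\pi_*$ to the vanishing of $\Hom_\T(c,Y[n])$ for all $c\in\T^\omega$ and $n\in\ZZ$, which by the triangulated generation hypothesis forces $Y\simeq 0$.

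The main obstacle is the lifting of coproducts in the second paragraph; once that is in place, every subsequent step is formal. The crucial rigidity input throughout is that stable $\infty$-categorical mapping spaces are infinite loop spaces of spectra, whose equivalence class is pinned down by their integer-indexed homotopy groups, and these groups are all computed inside $\T=h\Cscr$.
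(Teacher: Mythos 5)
Your proposal is correct, and it matches the paper's approach: the paper's proof is simply the citation to~\cite[1.4.4.2 and 1.4.4.3]{ha}, and what you have written is essentially a reconstruction of Lurie's argument for those results (lifting coproducts from $h\Cscr\we\T$ by checking the universal property on mapping spectra, identifying $\infty$-categorical with triangulated compactness, and concluding $\Cscr\we\Ind(\Cscr^\omega)$). The only loose spot is the ``telescope'' justification for general filtered colimits, which should be replaced by the standard observation that an exact functor preserving coproducts preserves all small colimits; this is exactly how the cited results handle it.
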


\begin{proof}
    See~\cite[1.4.4.2 and 1.4.4.3]{ha}.
\end{proof}

\begin{warning}
    Neeman has a notion of well generated $\infty$-category and it would be
    good to know that if $\T$ is well generated and admits an
    $\infty$-categorical model $\Cscr$, then $\Cscr$ is presentable. However,
    the key implication in~\cite[1.4.4.2]{ha} is specific to the compact case.
    We are not sure whether or not this is true and we will have to take care
    to figure out what is happening in our cases of interest.
\end{warning}

Now, we recall some facts about $t$-structures.

\begin{definition}\label{def:ttriangulated}
    Let $\T$ be a triangulated category. A $t$-structure on $\T$ is a pair of full
    subcategories $(\T_{\geq 0},\T_{\leq 0})$ such that
    \begin{enumerate}
        \item[(i)]  $\T_{\geq 0}[1]\subseteq\T_{\geq 0}$, $\T_{\leq 0}[-1]\subseteq\T_{\leq
            0}$;
        \item[(ii)] if $X\in\T_{\geq 0}$ and $Y\in\T_{\leq 0}$, then $\Hom_\T(X,Y[-1])=0$;
        \item[(iii)] every object $X$ fits into an exact triangle $\tau_{\geq 0}X\rightarrow
            X\rightarrow \tau_{\leq -1}X$ where $\tau_{\geq 0}X\in\T_{\geq 0}$ and $\tau_{\leq
            -1}X[1]\in\T_{\leq 0}$.
    \end{enumerate}
\end{definition}

There is an entirely parallel notion for stable $\infty$-categories.

\begin{definition}\label{def:tstable}
    Let $\Cscr$ be a stable $\infty$-category. A $t$-structure on $\Cscr$ is a pair of full
    subcategories $(\Cscr_{\geq 0},\Cscr_{\leq 0})$ such that
    \begin{enumerate}
        \item[(i)]  $\Cscr_{\geq 0}[1]\subseteq\Cscr_{\geq 0}$, $\Cscr_{\leq
            0}[-1]\subseteq\Cscr_{\leq
            0}$;
        \item[(ii)] if $X\in\Cscr_{\geq 0}$ and $Y\in\Cscr_{\leq 0}$, then $\Map_\Cscr(X,Y[-1])=0$;
        \item[(iii)] every object $X$ fits into an exact triangle $\tau_{\geq 0}X\rightarrow
            X\rightarrow \tau_{\leq -1}X$ where $\tau_{\geq 0}X\in\Cscr_{\geq 0}$ and $\tau_{\leq
            -1}X[1]\in\Cscr_{\leq 0}$.
    \end{enumerate}
\end{definition}

\begin{remark}\label{rem:2prime}
    \begin{enumerate}
        \item[(a)] Condition (ii) of Definition~\ref{def:tstable} is equivalent to
            \begin{enumerate}
                \item[(ii')] if $X\in\Cscr_{\geq 0}$ and $Y\in\Cscr_{\leq 0}$, then
                $\Hom_{h\Cscr}(X,Y[-1])=0$.
            \end{enumerate}
        \item[(b)] The truncations $\tau_{\geq n}X$ and $\tau_{\leq n}X$ are functorial:
            $\tau_{\geq n}$ is the right adjoint to the inclusion of $\Cscr_{\geq n}$ in
            $\Cscr$, and $\tau_{\leq n}$ is the left adjoint to the inclusion of $\Cscr_{\leq
            n}$ in $\Cscr$. The $n$th homotopy object $\pi_nX$ of $X$ is an object of the abelian
            category $\Cscr^\heart=\Cscr_{\geq 0}\cap\Cscr_{\leq 0}$, defined as $\tau_{\geq n}\tau_{\leq n}X[-n]$. Given a fiber
            sequence $X\rightarrow Y\rightarrow Z$, one obtains a natural long exact sequences
            $$\cdots\rightarrow\pi_n
            X\rightarrow\pi_nY\rightarrow\pi_nZ\rightarrow\pi_{n-1}X\rightarrow\cdots$$
            in $\Cscr^\heart$.
    \end{enumerate}
\end{remark}

\begin{lemma}\label{lem:t}
    Let $\Cscr$ be a stable $\infty$-category.
    The data of a $t$-structure on $\Cscr$ is equivalent to the data of a $t$-structure on
    the triangulated category $h\Cscr$.
\end{lemma}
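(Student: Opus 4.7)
The plan is to observe that a $t$-structure in either sense is simply a pair of replete full subcategories of $\Cscr$ (equivalently, of $h\Cscr$, since the two share the same objects) together with axioms (i)--(iii), and then to check that the three axioms translate bijectively between the stable and triangulated contexts under the tautological identifications $\Cscr_{\geq 0}\leftrightarrow(h\Cscr)_{\geq 0}$ and $\Cscr_{\leq 0}\leftrightarrow(h\Cscr)_{\leq 0}$.

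First I would dispatch axioms (i) and (iii), which transfer essentially for free. The suspension endofunctor on $\Cscr$ induces the shift functor on $h\Cscr$, so the shift-closure condition of Definition~\ref{def:tstable}(i) is literally the same condition as Definition~\ref{def:ttriangulated}(i). For (iii), a fiber sequence $\tau_{\geq 0}X\to X\to\tau_{\leq-1}X$ in $\Cscr$ becomes by definition an exact triangle in $h\Cscr$ with the same vertices, and conversely every exact triangle in $h\Cscr$ comes from such a fiber sequence in $\Cscr$; so axiom (iii) in one definition produces axiom (iii) in the other with the same truncation objects.

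The only substantive point is axiom (ii), and this is essentially the content of Remark~\ref{rem:2prime}(a). One direction is trivial: if $\Map_\Cscr(X,Y[-1])$ is contractible, then $\Hom_{h\Cscr}(X,Y[-1])=\pi_0\Map_\Cscr(X,Y[-1])=0$. For the converse, I would use the shift-closure of $\Cscr_{\leq 0}$: since $Y\in\Cscr_{\leq 0}$ implies $Y[-1-i]\in\Cscr_{\leq 0}$ for every $i\geq 0$, the triangulated orthogonality (ii$'$) yields
\[
\pi_i\Map_\Cscr(X,Y[-1])\cong\Hom_{h\Cscr}(X,Y[-1-i])=0
\]
for every $i\geq 0$, whence $\Map_\Cscr(X,Y[-1])$ is contractible. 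This bootstrap from the vanishing of a single $\Hom$-group on $h\Cscr$ to the contractibility of the mapping space in $\Cscr$ is the one place where the argument uses more than pure translation.

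There is essentially no obstacle here: the lemma is a translation exercise whose only real content is that the shift-closure axiom (i) upgrades $\Hom$-vanishing to mapping-space contractibility, which is routine. With that observation in hand the two packages of data and axioms become identical, and the lemma follows.
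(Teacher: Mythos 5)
Your proof is correct and follows essentially the same route as the paper: transport the pair of full subcategories along $\Cscr\to h\Cscr$ and observe that axioms (i) and (iii) match tautologically, with the only real content being that the $\Hom$-vanishing (ii$'$) upgrades to contractibility of the mapping space via shift-closure. The paper compresses exactly this point into a citation of Remark~\ref{rem:2prime}(a) and the phrase ``it is easy to check''; you have simply written out that remark's justification.
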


\begin{proof}
    Given a $t$-structure $(\Cscr_{\geq 0},\Cscr_{\leq 0})$, then the pair
    $h\Cscr_{\geq 0}\subseteq h\Cscr$ and $h\Cscr_{\leq 0}\subseteq h\Cscr$ of full
    subcategories defines a $t$-structure on $h\Cscr$ (see also Remark~\ref{rem:2prime}).
    Let $h\colon\Cscr\rightarrow h\Cscr$ be the natural functor.
    Similarly, given a $t$-structure $(\T_{\geq 0},\T_{\leq 0})$ on $h\Cscr$, let
    $\Cscr_{\geq 0}$ be the full subcategory of those objects $X\in\Cscr$ such that
    the image of $X$ in the homotopy category is in the subcategory $\T_{\geq 0}$
    and similarly for $\Cscr_{\leq 0}$. It is easy to check that
    these define a $t$-structure on $\Cscr$.
\end{proof}

The point of Lemma~\ref{lem:t} for us will be that $t$-structures go along for the
ride when considering enhancements.

We are now interested in a flurry of special properties of $t$-structures.

\begin{definition}\label{def:tomnibus}
Let $(\T_{\geq 0},\T_{\leq 0})$ be a $t$-structure on a triangulated
category $\T$ and let $(\Cscr_{\geq 0},\Cscr_{\leq 0})$ be a $t$-structure on a stable
$\infty$-category $\Cscr$. Set $\T_{\geq n}=\T_{\geq 0}[n]$, $\T_{\leq
n}=\T_{\leq 0}[n]$, $\Cscr_{\geq n}=\Cscr_{\geq 0}[n]$, and $\Cscr_{\leq n}=\Cscr_{\leq 0}[n]$.
    \begin{enumerate}
        \item[(a)]  The $t$-structure on $\T$ is {\bf left separated} if $$\cap_{n\in\ZZ}\T_{\geq
            n}=0.$$ It is {\bf right separated} if $$\cap_{n\in\ZZ}\T_{\leq n}=0.$$
        \item[(b)] The $t$-structure on $\Cscr$ is {\bf left separated} if
            $$\cap_{n\in\ZZ}\Cscr_{\geq
            n}=0.$$ It is {\bf right separated} if $$\cap_{n\in\ZZ}\Cscr_{\leq n}=0.$$
        \item[(c)] The $t$-structure on $\Cscr$ is {\bf left complete} if the natural map
            \begin{equation}\label{eq:leftcomplete}\Cscr\rightarrow\lim\left(\cdots\rightarrow\Cscr_{\leq 2}\xrightarrow{\tau_{\leq
                    1}}\Cscr_{\leq 1}\xrightarrow{\tau_{\leq 0}}\Cscr_{\leq
                    0}\right)\end{equation} is an equivalence. It is {\bf right
                complete} if the natural map
                $$\Cscr\rightarrow\lim\left(\cdots\rightarrow\Cscr_{\geq
                        -2}\xrightarrow{\tau_{\geq -1}}\Cscr_{\geq
                        -1}\xrightarrow{\tau_{\geq 0}}\Cscr_{\geq 0}\right)$$ is an
                equivalence.
        \item[(d)] Suppose that $\Cscr$ is presentable. We say that the $t$-structure is
            {\bf accessible} if $\Cscr_{\geq 0}$ is presentable. This happens if and only if
            $\Cscr_{\leq 0}$ is presentable. See~\cite[1.4.4.13]{ha}.
        \item[(e)] Suppose that $\Cscr$ has filtered colimits. We say that the $t$-structure is
            {\bf compatible with filtered colimits} if $\Cscr_{\leq 0}$ is
            closed under filtered colimits
            in $\Cscr$.
        \item[(f)] Suppose that $\Cscr$ has countable products. We say that $\Cscr$ is {\bf compatible with countable
            products} if $\Cscr_{\geq 0}$ is closed under countable products in $\Cscr$.
        \item[(g)] Suppose that $\T$ admits all small coproducts. We say that
            the $t$-structure $(\T_{\geq 0},\T_{\leq 0})$ is {\bf compatible
            with filtered homotopy colimits} if $\T_{\leq 0}$ is closed under
            filtered homotopy colimits in $\T$.
    \end{enumerate}
\end{definition}

\begin{warning}
    It is common to say that a $t$-structure is separated if it is both left and right
    separated. We will {\em never} do this. Instead, the notion of being separated (and
    complete) is reserved for prestable $\infty$-categories and will be introduced
    in Section~\ref{sec:prestable}.
\end{warning}

\begin{example}
    \begin{enumerate}
        \item[(i)]  The $\infty$-category $\Sp$ of spectra with its Postnikov $t$-structure
            is left and right complete, accessible, and compatible with filtered colimits.
            See~\cite[1.4.3.6]{ha}.
        \item[(ii)] The derived $\infty$-category $\Dscr(A)$ of any associative
            ring (or connective $\EE_1$-ring spectrum) together with
            its Postnikov (or standard) $t$-structure is left and right complete, accessible, and compatible
            with filtered colimits. See~\cite[7.1.1.13]{ha}.
        \item[(iii)] If $\Ascr$ is a Grothendieck abelian category, then the derived
            $\infty$-category $\Dscr(\Ascr)$ is left and right separated, right complete,
            accessible, and compatible with filtered colimits (see~\cite[1.3.5.21]{ha}). It is not typically left
            complete: see Example~\ref{ex:neeman}, due to Neeman, below.
        \item[(iv)] If $X$ is a quasi-compact scheme with affine diagonal, then $\Dscr(\QCoh(X))$,
            the derived $\infty$-category of quasi-coherent sheaves on $X$ is left and right
            complete, accessible, and compatible with filtered colimits. Indeed, everything
            except for left completeness follows from point (iii). But, in this case,
            $\Dscr(\QCoh(X))\we\Dscr_\qc(X)$ (the B\"okstedt--Neeman
            proof~\cite{bokstedt-neeman} in the quasi-compact and separated case immediately applies
            to the case of a quasi-compact scheme with affine diagonal) and $\Dscr_\qc(X)$ is always left complete as
            it is a limit of left complete $t$-structure along $t$-exact functors.
        \item[(v)] Consider $\Dscr(\ZZ)$ and fix a prime number $p$. The kernel of the
            localization $\Dscr(\ZZ)\rightarrow\Dscr(\ZZ[\tfrac{1}{p}])$ will be written
            $\Dscr(\Tors_p)$; it is the derived category of the Grothendieck abelian category of
            $p$-primary torsion abelian groups. With the Postnikov $t$-structure (induced
            from $\Dscr(\ZZ)$), it is left and right complete, accessible, and
            compatible with filtered colimits. However, there is an equivalence
            $$\Dscr(\Tors_p)\we\Dscr(\ZZ)_{\widehat{p}}$$
            where the latter is the $\infty$-category of derived $p$-complete complexes of
            abelian groups. There is a different $t$-structure on derived $p$-complete
            abelian groups, induced from the fully faithful inclusion into $\Dscr(\ZZ)$.
            It is left and right separated and even right complete by
            Proposition~\ref{prop:sepcomplete} below. It is accessible, but not compatible
            with filtered colimits. Indeed, $\colim_n\ZZ/p^n$ is the completion of
            $\QQ_p/\ZZ_p$ which is $\ZZ_p[1]$, so we see that the coconnective objects are
            not closed under filtered colimits in $\Dscr(\ZZ)_{\widehat{p}}$.
    \end{enumerate}
\end{example}

\begin{example}
    If $(\T_{\geq 0},\T_{\leq 0})$ is a left separated $t$-structure on $\T$ and if $X\in\T$
    is such that $\tau_{\leq n}X\we 0$ for all $n$, then $X\we 0$. Indeed, in this case,
    $\tau_{\geq n+1}X\we X$ for all $n$, so that $X\in\cap_{n\in\ZZ}T_{\geq n}=0$.
\end{example}

\begin{lemma}\label{lem:sepcomplete}
    If $\Cscr$ is left or right complete, then it is left or right separated, respectively.
\end{lemma}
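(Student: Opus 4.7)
The plan is to prove each implication by a direct diagram chase through the limit defining completeness, using the fact that an object in $\cap_n \Cscr_{\geq n}$ has vanishing $\tau_{\leq m}$-truncations for every $m$.

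I would handle left completeness implies left separatedness first. Suppose $X \in \cap_{n\in\ZZ} \Cscr_{\geq n}$. For every integer $m$, we have $X \in \Cscr_{\geq m+1}$, and the defining vanishing property of a $t$-structure (condition (ii) of Definition~\ref{def:tstable}) together with the characterization of $\tau_{\leq m}$ as the left adjoint to the inclusion of $\Cscr_{\leq m}$ shows that $\tau_{\leq m} X \simeq 0$. Thus the image of $X$ in each term of the tower
\[
\cdots \to \Cscr_{\leq 2} \xrightarrow{\tau_{\leq 1}} \Cscr_{\leq 1} \xrightarrow{\tau_{\leq 0}} \Cscr_{\leq 0}
\]
is zero, so the image of $X$ in $\lim \Cscr_{\leq n}$ is zero. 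Since $\Cscr$ is left complete, the functor $\Cscr \to \lim \Cscr_{\leq n}$ is an equivalence, and hence $X \simeq 0$. This gives $\cap_{n} \Cscr_{\geq n} = 0$, which is left separatedness.

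The right complete case is entirely symmetric. If $X \in \cap_{n\in\ZZ} \Cscr_{\leq n}$, then $\tau_{\geq m} X \simeq 0$ for all $m$ (now using that $\tau_{\geq m}$ is right adjoint to the inclusion of $\Cscr_{\geq m}$, as recorded in Remark~\ref{rem:2prime}(b)). Hence the image of $X$ in the tower $\cdots \to \Cscr_{\geq -1} \to \Cscr_{\geq 0}$ is zero at every stage, so $X$ goes to zero in the limit, and right completeness forces $X \simeq 0$.

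I do not anticipate a real obstacle here: the only thing to check is that an object of $\Cscr_{\geq m+1}$ has vanishing $\tau_{\leq m}$, which is immediate from the adjunction description of the truncations together with $\Cscr_{\geq m+1} \cap \Cscr_{\leq m} = 0$ (itself a consequence of Definition~\ref{def:tstable}(ii), since any $X$ in the intersection admits the identity map $X \to X$ which must be null). The argument requires no appeal to presentability, accessibility, or compatibility with filtered colimits — only the universal property of the truncation functors and the universal property of the limit in the (pre)statement of completeness.
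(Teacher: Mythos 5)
Your argument is correct and is essentially the paper's own proof: take $X$ in the intersection, note all its truncations vanish, and conclude from the limit description of completeness that $X\simeq 0$, with the right-complete case handled symmetrically. The extra detail you supply (deducing $\tau_{\leq m}X\simeq 0$ from the adjunction and condition (ii)) is a fine elaboration of a step the paper leaves implicit, but it is not a different route.
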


\begin{proof}
    Assume that $\Cscr$ is left complete. Let $X\in\cap_{n\in\ZZ}\Cscr_{\geq n}$. Then,
    $\tau_{\leq n}X\we 0$ for all $n$. Thus, $X$ is zero in the
    limit~\eqref{eq:leftcomplete}. Hence, $\Cscr$ is left separated. The proof in the right
    separated case is the same.
\end{proof}

There is an important partial converse due to Lurie.

\begin{proposition}\label{prop:sepcomplete}
    Let $\Cscr$ be a stable $\infty$-category with a $t$-structure $(\Cscr_{\geq
    0},\Cscr_{\leq 0})$. 
    \begin{enumerate}
        \item[{\rm (1)}] Suppose that $\Cscr$ admits countable coproducts and that $\Cscr_{\leq 0}$ is
            closed under countable coproducts in $\Cscr$. If the $t$-structure is right
            separated, then it is right complete.
        \item[{\rm (2)}] Suppose that $\Cscr$ admits countable products and
            that the $t$-structure is compatible with countable products. If the $t$-structure is left
            separated, then it is left complete.
    \end{enumerate}
\end{proposition}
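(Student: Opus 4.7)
By the self-duality obtained on passing to $\Cscr^{\op}$, statements (1) and (2) are equivalent (the roles of $\Cscr_{\geq 0}$ and $\Cscr_{\leq 0}$, of products and coproducts, and of left and right are interchanged), so I prove only (2). I will show that the canonical functor $\Phi\colon\Cscr\to\lim(\cdots\to\Cscr_{\leq 1}\to\Cscr_{\leq 0})$ sending $X$ to the tower $(\tau_{\leq n}X)$ is an equivalence. Everything reduces to the following object-wise claim: for every $Y\in\Cscr$, the canonical map $Y\to\lim_n\tau_{\leq n}Y$ is an equivalence in $\Cscr$.

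Granting the claim, full faithfulness of $\Phi$ follows from
$$\Map_\Cscr(X,Y)\we\Map_\Cscr(X,\lim_n\tau_{\leq n}Y)\we\lim_n\Map_\Cscr(X,\tau_{\leq n}Y)$$
together with the adjointness between $\tau_{\leq n}$ and the inclusion $\Cscr_{\leq n}\subseteq\Cscr$, which identifies the right-hand side with the mapping space in the limit $\infty$-category. For essential surjectivity, given a compatible tower $(Y_n)$ with $\tau_{\leq n}Y_{n+1}\we Y_n$, set $Y:=\lim_n Y_n$ in $\Cscr$; using that $\tau_{\leq k}$ commutes with sequential limits (verified below), $\tau_{\leq k}Y\we\lim_n\tau_{\leq k}Y_n\we Y_k$, since the tower $\tau_{\leq k}Y_n$ is eventually constant at $Y_k$.

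To prove the claim, the fibre sequences $\tau_{\geq n+1}Y\to Y\to\tau_{\leq n}Y$ assemble on taking sequential limits into a fibre sequence $W\to Y\to\lim_n\tau_{\leq n}Y$ with $W:=\lim_n\tau_{\geq n+1}Y$. Sequential limits exist in $\Cscr$ because they can be realised as fibres of the $(\mathrm{id}-\text{shift})$-map on a countable product. By left separatedness, it is enough to show $W\in\Cscr_{\geq k}$ for every $k\in\ZZ$. Fix such a $k$: by cofinality $W\we\lim_{n\geq k-1}\tau_{\geq n+1}Y\we\mathrm{fib}(P\to P)$ with $P:=\prod_{n\geq k-1}\tau_{\geq n+1}Y$, and the closure hypothesis implies $P\in\Cscr_{\geq k}=\Cscr_{\geq 0}[k]$.

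The remaining and main task is to show that the defining map $P\to P$ is surjective on $\pi_k$, for then its fibre lies in $\Cscr_{\geq k}$ by the long exact sequence in the heart. First, $\pi_k$ commutes with this countable product: $\tau_{\geq k}$ preserves all limits as a right adjoint, while $\tau_{\leq k}X\we\Sigma\,\mathrm{fib}(\tau_{\geq k+1}X\to X)$ preserves countable products too, being a finite-limit construction composed with an equivalence. Since $\pi_k\tau_{\geq n+1}Y=0$ for $n\geq k$, only the $n=k-1$ factor contributes, giving $\pi_k P\iso\pi_k Y$; moreover $\pi_k$ of the transition $\tau_{\geq k+1}Y\to\tau_{\geq k}Y$ vanishes because its source is zero, so the $\pi_k$-level map on $P\to P$ is the identity on $\pi_k Y$, hence surjective. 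The main obstacle throughout is this connectivity estimate for $W$: the closure of $\Cscr_{\geq 0}$ under countable products is precisely what keeps the Milnor fibre calculation inside $\Cscr_{\geq k}$, so that left separatedness can be invoked to conclude $W=0$.
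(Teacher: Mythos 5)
Your route is essentially a self-contained reproof of what the paper simply imports: the paper disposes of part (2) by citing Lurie \cite[1.2.1.19]{ha} and deduces (1) by exactly the opposite-category observation you make, whereas you argue (2) directly. Your reduction to the object-wise claim, the presentation $W\we\mathrm{fib}(P\to P)$ with $P=\prod_{n\geq k-1}\tau_{\geq n+1}Y\in\Cscr_{\geq k}$, and the $\pi_k$-surjectivity computation are correct; the only small imprecision there is that ``$\tau_{\geq k}$ preserves all limits as a right adjoint'' only gives products formed in $\Cscr_{\geq k}$, and you need the closure hypothesis once more to identify those with products formed in $\Cscr$ (the same caveat applies inside your formula $\tau_{\leq k}X\we\Sigma\,\mathrm{fib}(\tau_{\geq k+1}X\to X)$).

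The genuine gap is in essential surjectivity. You invoke the statement that $\tau_{\leq k}$ commutes with sequential limits and say it is ``verified below,'' but what is verified below is only commutation with countable products, and the general commutation with sequential limits is false under the stated hypotheses. In the $\infty$-category of spectra (which satisfies them), take the tower with every term $Z_m=\Sigma^{k+1}\H\ZZ$ and transition maps multiplication by $p$: then $\pi_k\bigl(\lim_m Z_m\bigr)\iso\widehat{\ZZ}_p/\ZZ\neq 0$ (a $\lim^1$-contribution from $\pi_{k+1}$), so $\tau_{\leq k}\lim_m Z_m\neq 0$, while $\tau_{\leq k}Z_m\we 0$ for every $m$ and hence $\lim_m\tau_{\leq k}Z_m\we 0$. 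What you actually need is only the special case of a tower $(Y_n)$ with $Y_n\we\tau_{\leq n}Y_{n+1}$, and that case is true but requires an argument of the same shape as your proof of the main claim rather than an appeal to a general lemma: write $\mathrm{fib}\bigl(\lim_nY_n\to Y_k\bigr)\we\lim_{n\geq k}\tau_{\geq k+1}Y_n$ as the fibre of $1-s$ on $\prod_{n\geq k}\tau_{\geq k+1}Y_n\in\Cscr_{\geq k+1}$, and check that $1-s$ is surjective on $\pi_{k+1}$ of this product; this holds because the tower $(\pi_{k+1}Y_n)_{n\geq k+1}$ has isomorphisms for transition maps, so $1-s$ admits the explicit section given by finite partial sums (no exactness properties of countable products in the heart are needed). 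With that paragraph inserted, your proof is complete and, unlike the paper's, does not rely on the external reference.
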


\begin{proof}
    Part (2) is~\cite[1.2.1.19]{ha}. Part (1) follows from (2) by taking opposite
    categories and using that if $(\Cscr_{\geq 0},\Cscr_{\leq 0})$ is a $t$-structure on
    $\Cscr$, then $(\Cscr_{\leq 0}^\op,\Cscr_{\geq 0}^\op)$ is a $t$-structure on
    $\Cscr^\op$.
\end{proof}

\begin{warning}\label{warning:sep}
    It is tempting to guess that if $\Cscr$ is left separated, then the natural
    map $X\rightarrow\lim_n\tau_{\leq n}X$ is an equivalence for all $X$. This
    is certainly the case if $X$ is left complete. However, in general this is
    false. Suppose that $\Cscr_{\geq 0}$ is not closed under countable products
    in $\Cscr$. Let $\{X_i\}$ for $i\geq 0$ be a sequence of objects of
    $\Cscr_{\geq 0}$ such that $X_i\in\Cscr_{\leq i}$ and $\prod_{i=0}^\infty X_i$ is not in $\Cscr_{\geq 0}$.
    Consider the natural map $$\bigoplus_{i=0}^\infty X_i\rightarrow\lim_n\tau_{\leq
    n}\bigoplus_{i=0}^\infty X_i\we\lim_n\bigoplus_{i=0}^n X_i\we\prod_{i=0}^\infty X_i.$$
    The direct sum is evidently in $\Cscr_{\geq 0}$ but the product is not in
    $\Cscr_{\geq 0}$, by hypothesis, so the map is not an equivalence.
    For additional discussion, see Section~\ref{sub:completeness}.
\end{warning}

\begin{example}\label{ex:neeman}
    Neeman has shown in~\cite{neeman-left} that examples of this sort abound. In particular,
    $\Dscr(\Ascr)$ where $\Ascr$ is the Grothendieck abelian
    category of representations of $\Ga$ over a characteristic $p$ field is not
    left complete.
\end{example}

%

We will need to have a general condition for accessibility of a $t$-structure.

\begin{lemma}\label{lem:access}
    Let $\Cscr$ be a stable presentable $\infty$-category with a $t$-structure $(\Cscr_{\geq
    0},\Cscr_{\leq 0})$. Suppose that there is a set of objects $\{X_i\}_{i\in I}$ of
    $\Cscr_{\geq 0}$ such that $\Cscr_{\geq 0}$ is the smallest subcategory of $\Cscr$
    containing the $\{X_i\}_{i\in I}$ and closed under colimits and extensions in $\Cscr$.
    Then, $\Cscr_{\geq 0}$ is presentable.
\end{lemma}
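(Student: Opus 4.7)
The plan is to reduce to showing that $\Cscr_{\leq -1}$ (equivalently, by shifting, $\Cscr_{\leq 0}$) is presentable: once this is known, the result cited in Definition~\ref{def:tomnibus}(d) — namely \cite[1.4.4.13]{ha} — gives presentability of $\Cscr_{\geq 0}$ as well. I will obtain presentability of $\Cscr_{\leq -1}$ by exhibiting it as the full subcategory of $S$-local objects of $\Cscr$ for the small set of morphisms $S=\{0\to X_i\}_{i\in I}$, so that Lurie's accessible localization theorem \cite[5.5.4.15]{htt} applies directly.

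The key point is the identification
\[ \Cscr_{\leq -1} = \{Y\in\Cscr : \Map_\Cscr(X_i,Y)\we * \text{ for all } i\in I\}. \]
The inclusion $\subseteq$ follows from the $t$-structure orthogonality axiom combined with closure of $\Cscr_{\geq 0}$ under suspension, which forces $\pi_n\Map_\Cscr(X_i,Y)=0$ for all $n\geq 0$. For $\supseteq$, let $\Cscr'\subseteq\Cscr$ be the full subcategory of those $X$ with $\Map_\Cscr(X,Y)\we *$. Then $\Cscr'$ is closed under small colimits (since $\Map_\Cscr(-,Y)$ turns colimits into limits, and a limit of contractible spaces is contractible) and under extensions (from the fiber sequence of mapping spaces associated to a cofiber sequence $A\to B\to C$). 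By the generation hypothesis of the lemma, $\Cscr'\supseteq\Cscr_{\geq 0}$. Applying this to $X=\tau_{\geq 0}Y\in\Cscr_{\geq 0}$ and combining $\Map_\Cscr(\tau_{\geq 0}Y,Y)\we *$ with $\Map_\Cscr(\tau_{\geq 0}Y,\tau_{\leq -1}Y)\we *$ from the $t$-structure, the fiber sequence
\[ \Map_\Cscr(\tau_{\geq 0}Y,\tau_{\geq 0}Y)\to\Map_\Cscr(\tau_{\geq 0}Y,Y)\to\Map_\Cscr(\tau_{\geq 0}Y,\tau_{\leq -1}Y) \]
coming from the triangle $\tau_{\geq 0}Y\to Y\to\tau_{\leq -1}Y$ forces $\Map_\Cscr(\tau_{\geq 0}Y,\tau_{\geq 0}Y)\we *$, whence $\tau_{\geq 0}Y\we 0$ and $Y\in\Cscr_{\leq -1}$.

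With the identification in hand, \cite[5.5.4.15]{htt} shows that $\Cscr_{\leq -1}$ is a presentable reflective subcategory of $\Cscr$, completing the proof. The only subtlety I anticipate is the bookkeeping between locality against the single morphisms $0\to X_i$ and orthogonality against all suspensions $X_i[n]$ for $n\geq 0$, but this collapses because $\Cscr_{\geq 0}$ already contains every $X_i[n]$ by the suspension-closure axiom of a $t$-structure, so the two conditions agree. There is no deeper obstacle; the content of the lemma is really the observation that the generation hypothesis on $\Cscr_{\geq 0}$ dualizes to a small-set-of-morphisms description of $\Cscr_{\leq -1}$, after which Lurie's localization machinery does the work.
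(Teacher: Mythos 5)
Your argument is correct. Note, however, that the paper does not give an independent proof here: it simply cites Lurie's result (HA~1.4.4.11, via the reference given in the text) which produces an accessible $t$-structure whose connective part is the smallest subcategory containing the $X_i$ and closed under colimits and extensions. What you have done is essentially unpack the mechanism behind that citation, and in a slightly streamlined form: since the $t$-structure is assumed to exist, you only need accessibility, and you get it by identifying $\Cscr_{\leq -1}$ with the $S$-local objects for $S=\{0\to X_i\}$ (your two inclusions, using orthogonality plus closure of the right orthogonal of a fixed $Y$ under colimits and extensions, are exactly right), invoking \cite[5.5.4.15]{htt} for presentability of the local objects, and then transferring presentability from $\Cscr_{\leq 0}$ to $\Cscr_{\geq 0}$ by \cite[1.4.4.13]{ha}, exactly as the paper itself does in the proofs of its main theorems. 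By contrast, the cited Lurie proposition constructs the $t$-structure from the generators and one would then identify its connective part with the given $\Cscr_{\geq 0}$ (immediate here, as both are characterized as the smallest subcategory containing the $X_i$ closed under colimits and extensions); your route avoids even that small identification step. So there is no gap; your proof is a self-contained substitute for the citation.
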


\begin{proof}
    See~\cite[1.4.4.11]{sag}.
\end{proof}

\section{Prestable $\infty$-categories}\label{sec:prestable}

Let $\T$ be a triangulated category with a $t$-structure $(\T_{\geq 0},\T_{\leq
0})$. A prestable $\infty$-category is to $\T_{\geq 0}$ as a stable
$\infty$-category is to $\T$. Such objects have not been studied in the world
of dg categories, but the homotopy categories have received some small amount
of attention in~\cite{keller-vossieck-sous,keller-vossieck-aisles} under the
name of {\em suspended categories} or {\em aisles}. Most work, as
in~\cite{tarrio-etal,hrbek}, has focused
on the classification of aisles inside a fixed triangulated category,
rather than the categorical properties of the aisles themselves.

The primary feature of prestable $\infty$-categories is that the residue of the
$t$-structure is not extra structure but rather an inherent feature. In
particular, every prestable $\infty$-category $\Dscr$ has a heart
$\Dscr^\heart$, which is equivalent to the nerve of an additive category sitting fully
faithfully inside $\Dscr$. In many cases of interest, such as when $\Dscr$ is
the connective part of some $t$-structure, $\Dscr^\heart$ is abelian.
The point for us is that often there are {\em unique} prestable $\infty$-categories having certain
properties and with a certain heart.

The definitions below are due to Lurie~\cite{sag}.

\begin{definition}\label{def:prestable}
    An $\infty$-category $\Cscr$ is {\bf prestable} if
    \begin{enumerate}
        \item[(a)]  $\Cscr$ is pointed and has finite colimits,
        \item[(b)]  the suspension functor $\Sigma=[1]\colon\Cscr\rightarrow\Cscr$ is fully
            faithful;
        \item[(c)]  if $u\colon X\rightarrow Y[1]$ is a map in $\Cscr$, then $u$ admits a fiber.
        \end{enumerate}
\end{definition}

\begin{remark}
    \begin{enumerate}
        \item[(1)]    Lurie shows that $\Cscr$ is prestable if and only if it admits a fully faithful functor
            $\Cscr\rightarrow\Dscr$ such that $\Dscr$ is stable and the essential image of $\Cscr$
            is closed under finite colimits and extensions in $\Dscr$. In fact, we can take
            $\Dscr\we\SWscr(\Cscr)=\colim\left(\Cscr_{\geq 0}\xrightarrow{\Sigma}\Cscr_{\geq
            0}\xrightarrow{\Sigma}\Cscr_{\geq 0}\rightarrow\cdots\right)$.
        \item[(2)]    Let $\Cscr$ be a prestable $\infty$-category and let $\Cscr_{\leq n}\subseteq\Cscr$ be
            the full subcategory of $n$-truncated objects. Then,
            $\Cscr^\heart=\Cscr_{\leq 0}$ is equivalent to (the nerve of) an additive category.
        \item[(3)]  A prestable $\infty$-category has finite limits if and only if it is the
            connective part of a $t$-structure on some stable $\infty$-category
            $\Dscr$. In this case,
            $\Cscr^\heart=\Dscr^\heart$ is an abelian category by~\cite[1.3.6]{bbd}.
            Again, we can take $\Dscr\we\SWscr(\Cscr)$.
        \item[(4)] When $\Cscr$ is a prestable $\infty$-category with finite limits, we can
            construct the $\infty$-category
            $$\Sp(\Cscr)=\lim\left(\cdots\rightarrow\Cscr\xrightarrow{\Omega}\Cscr\xrightarrow{\Omega}\Cscr\right)$$
            of spectrum objects in $\Cscr$. In this case, there is a fully faithful inclusion
            $\Cscr\rightarrow\Sp(\Cscr)$ which is the connective part of a $t$-structure on
            $\Sp(\Cscr)$.
   \end{enumerate}
\end{remark}

\begin{example}\label{ex:prestable}
    \begin{enumerate}
        \item[(a)]   If $\Cscr$ is a stable $\infty$-category with a $t$-structure $(\Cscr_{\geq
                0},\Cscr_{\leq 0})$, then $\Cscr_{\geq 0}$ is a prestable $\infty$-category.
                We will mostly study a special case, namely $\Dscr(\Ascr)_{\geq 0}$ when $\Ascr$ is
                a Grothendieck abelian category.
        \item[(b)]   If $R$ is a commutative ring, then $\Perfscr(R)_{\geq
            0}=\Perfscr(R)\cap\Dscr(R)_{\geq 0}$ is closed under extensions and finite
            colimits in $\Perfscr(R)$. It follows that $\Perfscr(R)_{\geq 0}$ is prestable.
            It is typically not the connective part of a $t$-structure on $\Perfscr(R)$. In
            fact, this holds if and only if $R$ satisfies some strong regularity conditions.
        \item[(c)] The $\infty$-category $\Sp_{\geq 0}^\omega$ of compact connective spectra
            is a prestable $\infty$-category. Again, this is not the connective part of a
            $t$-structure on compact spectra $\Sp^\omega$.
        \item[(d)]  Let $\Ascr$ be a small additive category. We let
            $\Pscr_\Sigma(\Ascr)=\Fun^\pi(\Ascr^\op,\Sscr)$, the $\infty$-category of finite
            product preserving functors from $\Ascr^\op$ to the $\infty$-category of spaces.
            This is equivalent to $\Fun^\pi(\Ascr^\op,\Sp_{\geq 0})$ and also to
            $\Dscr(\Mod_{\Ascr})_{\geq 0}$, where
            $\Mod_{\Ascr}=\Fun^\pi(\Ascr^{\op},\Mod_\ZZ)$ is the
            Grothendieck abelian category of additive functors from $\Ascr^\op$ to the category
            of abelian groups. In this case, the $\infty$-category
            $\Sp(\Pscr_\Sigma(\Ascr))$ of spectrum objects in $\Pscr_\Sigma(\Ascr)$ is equivalent to
            $\Fun^\pi(\Ascr^\op,\Sp)\we\Dscr(\Mod_\Ascr)$.
    \end{enumerate}
\end{example}

\begin{definition}\label{def:prestableomnibus}
    Let $\Cscr$ be a prestable $\infty$-category which admits finite limits.
    \begin{itemize}
        \item[(i)]  We say that $\Cscr$ is {\bf separated} if for an object $X$
            the condition $\tau_{\leq n}X\we 0$ for all $n\geq 0$ implies $X\we 0$.
        \item[(ii)] We say that $\Cscr$ is {\bf complete} if the natural map
        $$\Cscr\rightarrow\lim\left(\cdots\rightarrow\Cscr_{\leq 2}\xrightarrow{\tau_{\leq
                1}}\Cscr_{\leq 1}\xrightarrow{\tau_{\leq 0}}\Cscr_{\leq 0}\right)$$ is an
        equivalence.
        \item[(iii)] We say that $\Cscr$ is {\bf Grothendieck prestable} if it is presentable and
            filtered colimits are left exact.
    \end{itemize}
\end{definition}

\begin{remark}
    \begin{enumerate}
        \item[(i)] If $\Cscr$ is a stable $\infty$-category with a
            $t$-structure $(\Cscr_{\geq 0},\Cscr_{\leq 0})$, then $\Cscr$ is
            left separated if and only if the prestable $\infty$-category $\Cscr_{\geq 0}$ is separated.
        \item[(ii)] Similarly, $\Cscr$ is left complete if and only if
            $\Cscr_{\geq 0}$ is complete. This follows for example
            from~\cite[1.2.1.17]{ha}.
        \item[(iii)]
            Finally, a prestable $\infty$-category $\Cscr$ is Grothendieck prestable if and only if there is a stable presentable
            $\infty$-category $\Dscr$ with an accessible $t$-structure $(\Dscr_{\geq 0},\Dscr_{\leq
            0})$ that is compatible with filtered colimits and such that $\Dscr_{\geq
            0}\we\Cscr$. See~\cite[C.1.4.1]{sag}.
    \end{enumerate}
\end{remark}

As in the case of stable $\infty$-categories, complete implies separated. There is another
crucial sequence of definitions, again all due to Lurie~\cite{sag}.

\begin{definition}
    Let $\Cscr$ be a Grothendieck prestable $\infty$-category.
    \begin{enumerate}
        \item[(a)]   Say that $\Cscr$ is {\bf $n$-complicial} if for every object $Y\in\Cscr$ there is an
        object $X\in\Cscr_{\leq n}$ and a map $X\rightarrow Y$ inducing a surjection
        $\pi_0X\rightarrow\pi_0Y$ in $\Cscr^\heart$.
        \item[(b)]   Say that $\Cscr$ is {\bf weakly $n$-complicial} if the above
            condition holds for every $Y$ such that $Y\in\Cscr_{\leq m}$ for
            some $m$ (i.e., it holds for the bounded above $Y$).
        \item[(c)]   Say that $\Cscr$ is {\bf anticomplete} if the natural map
        $\Fun^\L(\Cscr,\Dscr)\rightarrow\Fun^\L(\Cscr,\widehat{\Dscr})$ is an equivalence
        for every Grothendieck prestable $\infty$-category $\Dscr$, where
        $\widehat{\Dscr}=\lim_n\Dscr_{\leq n}$ is the completion of $\Dscr$ and where
        $\Fun^\L(-,-)$ denotes the $\infty$-category of colimit preserving functors.
    \end{enumerate}
\end{definition}

\begin{example}
    \begin{enumerate}
        \item[(i)] If $R$ is a dg algebra with $\H_i(R)=0$ for $i<0$, then
            $\Dscr(R)_{\geq 0}$ is $n$-complicial if and only if $\H_i(R)=0$ for $i>n$.
            See~\cite[C.5.5.15]{sag}.
        \item[(ii)] If $\Ascr$ is a Grothendieck abelian category, then $\Dscr(\Ascr)_{\geq
            0}$ is $0$-complicial. For the simple argument, see~\cite[C.5.3.2]{sag}.
        \item[(iii)] If $X$ is a quasi-compact scheme with affine diagonal, then
            $\Dscr_{\qc}(X)_{\geq 0}$ is $0$-complicial. Indeed, in this case,
            $\Dscr_\qc(X)_{\geq 0}\we\Dscr(\QCoh(X))_{\geq 0}$ by~\cite{bokstedt-neeman}, so
            we conclude by (ii).
    \end{enumerate}
\end{example}

\section{Bounded above enhancements}\label{sec:ba}

To begin, we rephrase a result of Lurie in the present context. We write
$\D^+(\Ascr)$ for the homologically bounded above derived category when $\Ascr$
has enough injectives and $\D^-(\Ascr)$ for the homologically bounded below
derived category when $\Ascr$ has enough projectives.

\begin{proposition}\label{prop:ba}
    Let $\Ascr$ be an abelian category with enough injectives. The bounded
    above derived category $\D^+(\Ascr)$ admits a unique $\infty$-categorical enhancement.
\end{proposition}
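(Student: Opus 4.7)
My plan is to construct an equivalence $\Dscr^+(\Ascr) \xrightarrow{\simeq} \Cscr$ via injective resolutions, exploiting the fact that mapping spaces into injective objects in $\Cscr$ are discrete. Given any enhancement $\Cscr$ of $\D^+(\Ascr)$, first lift the canonical $t$-structure via Lemma~\ref{lem:t} to obtain $(\Cscr_{\geq 0},\Cscr_{\leq 0})$ with $\Cscr^\heart \simeq \Ascr$. Since every object of $\D^+(\Ascr)$ is homologically bounded above, so is every object of $\Cscr$, i.e.\ $\Cscr = \bigcup_n \Cscr_{\leq n}$; passing to opposites, this becomes the Spanier--Whitehead presentation $\Cscr^{\op} \simeq \SWscr(\Cscr_{\leq 0}^{\op})$, where $\Cscr_{\leq 0}^{\op}$ is a prestable $\infty$-category with finite limits and heart $\Ascr^{\op}$. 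It thus suffices to identify $\Cscr_{\leq 0}^{\op}$ canonically in terms of $\Ascr$.

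The key observation is that for any injective $I \in \Ascr$ (viewed inside $\Cscr^\heart$) and any $X \in \Cscr_{\geq 0}$, the mapping space $\Map_\Cscr(X, I)$ is discrete: indeed $\pi_k \Map_\Cscr(X, I) \cong \Hom_{\D^+(\Ascr)}(X, I[-k]) = \Ext^{-k}_\Ascr(X, I)$, which vanishes for $k \geq 1$ by injectivity of $I$ in $\Ascr$. In particular, the full subcategory $\mathcal{J} \subseteq \Cscr^\heart$ of injectives is equivalent to the ordinary $1$-category $\mathcal{J}$ of injectives in $\Ascr$, and dually, each $I \in \mathcal{J}^{\op}$ is a projective object of $\Cscr_{\leq 0}^{\op}$. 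By the enough-injectives hypothesis, $\mathcal{J}^{\op}$ generates $\Cscr_{\leq 0}^{\op}$ under sifted colimits; invoking the universal property of $\Pscr_\Sigma$ from Example~\ref{ex:prestable}(d), the additive inclusion $\mathcal{J}^{\op} \hookrightarrow \Cscr_{\leq 0}^{\op}$ extends to a sifted-colimit-preserving comparison functor $\Pscr_\Sigma(\mathcal{J}^{\op}) \to \Cscr_{\leq 0}^{\op}$, from which $\Cscr_{\leq 0}^{\op}$ emerges as a canonical localization (or essential image) depending only on $\Ascr$. Equivalently, using that $\Dscr^+(\Ascr)$ admits the standard presentation as the dg nerve of bounded below injective complexes, the discreteness above promotes the identification $\mathcal{J} \simeq \mathcal{J} \subseteq \Cscr^\heart$ to a functor $\Dscr^+(\Ascr) \to \Cscr$.

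The main obstacle is making the identification precise and verifying it is an equivalence. Full faithfulness reduces, via the computation of mapping spaces on both sides through injective resolutions, to the discreteness calculation above. Essential surjectivity requires showing that every bounded-above object $X \in \Cscr$ is canonically equivalent to its injective resolution interpreted in $\Cscr$; this can be argued by induction on the length of the $t$-structure interval containing $X$ (using $\Cscr = \bigcup_n \Cscr_{\leq n}$), building the resolution stage by stage and appealing at each stage to the discreteness of injective mapping spaces to ensure that the gluing data is uniquely and coherently determined by the $1$-categorical shadow in $\Ascr$. The principal difficulty is that the hearts of $\Pscr_\Sigma(\mathcal{J}^{\op})$ and $\Cscr_{\leq 0}^{\op}$ differ in general (additive functors $\mathcal{J} \to \mathrm{Ab}$ versus $\Ascr^{\op}$), so one must realize $\Cscr_{\leq 0}^{\op}$ as the quotient of $\Pscr_\Sigma(\mathcal{J}^{\op})$ cutting out $\Ascr^{\op}$ from $\mathrm{Mod}_{\mathcal{J}^{\op}}$, a step where the discreteness of injective mapping spaces is essential to guarantee that the defining relations lift uniquely from $\Ascr$ to $\Cscr$.
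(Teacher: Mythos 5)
Your skeleton (lift the $t$-structure via Lemma~\ref{lem:t}, observe that every object of $\Cscr$ is bounded above, and then try to pin down the coconnective part using the injectives of $\Ascr$) points in a reasonable direction, but the two steps carrying all the weight are not proved, and the mechanisms you invoke for them do not apply. First, the comparison functor: the universal property of $\Pscr_\Sigma(\mathcal{J}^{\op})$ (\cite[5.5.8.15]{htt}) extends functors only into $\infty$-categories admitting sifted colimits, whereas an arbitrary enhancement $\Cscr$ of $\D^+(\Ascr)$ has no filtered colimits or geometric realizations at all ($\D^+(\Ascr)$ lacks infinite coproducts), so ``extends to a sifted-colimit-preserving comparison functor $\Pscr_\Sigma(\mathcal{J}^{\op})\to\Cscr_{\leq 0}^{\op}$'' is unjustified; and even granting some functor, the assertion that $\Cscr_{\leq 0}^{\op}$ then ``emerges as a canonical localization depending only on $\Ascr$'' is exactly the statement to be proved --- you would have to show that the inverted class (equivalently the kernel) is determined by the triangulated equivalence $h\Cscr\we\D^+(\Ascr)$ alone, and you give no argument for this (compare the proof of Theorem~\ref{thm:3}, where establishing the analogous identity $S_\Cscr=S_\Dscr$ is the entire difficulty). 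Second, your essential-surjectivity argument by ``induction on the length of the $t$-structure interval'' only reaches objects bounded on both sides; objects of $\D^+(\Ascr)$ are in general unbounded below, so realizing them inside $\Cscr$ from an injective resolution requires a totalization/convergence argument in a category where countable limits are not guaranteed to exist, and this is not addressed.

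There is also a conceptual slip in your ``key observation'': for $X\in\Cscr_{\geq 0}$ and $I\in\Cscr^\heart$ the vanishing $\pi_k\Map_\Cscr(X,I)\iso\Hom_{h\Cscr}(X[k],I)=0$ for $k\geq 1$ follows from the $t$-structure axioms alone and has nothing to do with injectivity. The vanishing that actually does the work is $\Ext^i_{h\Cscr}(X,I)=0$ for $i>0$, $X\in\Ascr$ (or $X$ coconnective) and $I$ injective, which holds because $h\Cscr\we\D^+(\Ascr)$; this is what makes the injectives behave as projective generators of $\Cscr_{\leq 0}^{\op}$ and kills the coherence obstructions you wave at. The paper's proof short-circuits all of this by citing \cite[1.3.3.7]{ha}: the universal property of $\Dscr^+(\Ascr)$ produces a $t$-exact functor $\Dscr^+(\Ascr)\to\Cscr$ inducing the given equivalence on hearts, the Ext-vanishing just mentioned gives full faithfulness, and essential surjectivity is the observation (which you did make) that every object of $\Cscr$ is bounded above. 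A self-contained argument along your lines would amount to reproving \cite[1.3.3.7]{ha}, and the steps you leave as ``uniquely and coherently determined by the $1$-categorical shadow'' are precisely its content.
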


\begin{proof}
    This is basically the content of~\cite[1.3.3.7]{ha} (see
    also~\cite[1.3.2.8]{ha}). Let $\Dscr^+(\Ascr)$ be the bounded above derived
    $\infty$-category constructed as in~\cite[1.3.2.7]{ha} using the dg nerve.
    This is an $\infty$-categorical model for $\D^+(\Ascr)$, so an enhancement
    exists. Now, suppose that $\Cscr$ is a general enhancement. As in
    Lemma~\ref{lem:t}, the $t$-structure on $\D^+(\Ascr)$ lifts to a
    $t$-structure on $\Cscr$ with heart $\Cscr^\heart\we\Ascr$. It follows
    from~\cite[1.3.3.7]{ha} that there exists a unique (up to homotopy)
    $t$-exact functor
    $\Dscr^+(\Ascr)\rightarrow\Cscr$ inducing an equivalence on hearts.
    Moreover, if $X\in\Ascr$ is some object and $Y\in\Ascr$ is injective, then
    $$\Ext^i_\Cscr(X,Y)\iso\Ext^i_\Ascr(X,Y)=0$$ for $i>0$ since
    $h\Cscr\we\D^+(\Ascr)$. To be entirely precise, we have a diagram
    $$\xymatrix{
        \Dscr^+(\Ascr)\ar[r]\ar[d]  &   \Cscr\ar[d]\\
        \D^+(\Ascr)\ar[r]&h\Cscr,
    }$$ where the top map is induced by the universal property of
    $\Dscr^+(\Ascr)$, the vertical maps are the truncations,\footnote{The nerve construction
        gives a fully faithful functor from $1$-categories into $\infty$-categories. We
            therefore view any category as an $\infty$-category. Taking the homotopy
            category is the left adjoint of this inclusion, and $\Cscr\rightarrow h\Cscr$ is
            the unit map of the adjunction.} and the bottom map
    is the fixed equivalence from the hypothesis that $\Cscr$ is an enhancement
    of $\D^+(\Ascr)$. We are {\em not} asserting that this square is
    commutative. However, it is commutative when restricted to hearts. Since
    all of the functors involved commute with the translation functors $[1]$,
    it follows that we can compute $\Hom_{h\Cscr}(X,Y[n])$ as claimed.
    Therefore, $\Dscr^+(\Ascr)\rightarrow\Cscr$ is fully faithful
    by~\cite[1.3.3.7]{ha}. The
    essential image is $\Cscr^+\subseteq\Cscr$, the full subcategory of bounded
    above objects in the $t$-structure. However, every object of $\Cscr$ is
    bounded above as this may be checked on the homotopy category. This
    completes the proof.
\end{proof}

\begin{variant}
    There is an entirely similar fact about abelian categories with enough
    projectives and bounded below derived categories. Note that Corollary~\ref{maincor:5} removes the assumption of having enough
    injectives or projectives from these statements for small abelian categories.
\end{variant}

\begin{remark}
    Lurie's proof of the crucial fact~\cite[1.3.3.7]{ha} used in the proof of
    Proposition~\ref{prop:ba} is rather different from the approach used
    by Lunts--Orlov and Canonaco--Stellari and works by identifying a universal property
    for $\Dscr^-(\Ascr)_{\leq 0}$ (see~\cite[1.3.3.8]{ha}). In particular, by using it below in the proofs
    of Theorems~\ref{thm:1} and~\ref{thm:2}, we are not simply reformulating the proofs
    of~\cite{lunts-orlov,cs-uniqueness}.
\end{remark}

\section{A detection lemma}\label{sec:detection}

Several theorems below rely on the ability to detect certain properties of a
$t$-structure on the homotopy category. We compile these in the following,
basically trivial, lemma.

\begin{lemma}\label{lem:detection}
    Let $\Cscr$ be a stable $\infty$-category with a $t$-structure
    $(\Cscr_{\geq 0},\Cscr_{\leq 0})$.
    \begin{enumerate}
        \item[{\em (a)}]
            The $t$-structure $(\Cscr_{\geq 0},\Cscr_{\leq 0})$ is left or right separated if and
            only if the same is true of the $t$-structure $(h\Cscr_{\geq 0},h\Cscr_{\leq 0})$ on $h\Cscr$.
        \item[{\em (b)}] The $t$-structure is compatible with countable products if
            and only if $h\Cscr_{\geq 0}$ is closed under countable products in
            $h\Cscr$.
        \item[{\em (c)}] Suppose now that $\Cscr^\heart$ has enough injectives
            and that $\Dscr^+(\Cscr^\heart)\rightarrow\Cscr^+$ is an
            equivalence. The $t$-structure on $\Cscr$ is compatible with
            filtered colimits if and only if the same is true of
            $\Dscr^+(\Cscr^\heart)$.
    \end{enumerate}
\end{lemma}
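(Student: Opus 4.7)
For (a) and (b), the plan is to transfer each condition directly along the homotopy category functor $\Cscr \to h\Cscr$. By Lemma~\ref{lem:t}, $(h\Cscr)_{\geq n} = h(\Cscr_{\geq n})$ and $(h\Cscr)_{\leq n} = h(\Cscr_{\leq n})$. Since an object of a stable $\infty$-category is zero precisely when its image in the homotopy category is zero, the intersections $\bigcap_n \Cscr_{\geq n}$ and $\bigcap_n (h\Cscr)_{\geq n}$ vanish together, giving (a); the right separated case is symmetric. For (b), the functor $\Cscr \to h\Cscr$ preserves products, so a countable product of objects of $\Cscr_{\geq 0}$ formed in $\Cscr$ lies in $\Cscr_{\geq 0}$ if and only if its image in $h\Cscr$ lies in $(h\Cscr)_{\geq 0}$, yielding the equivalence of the two closure conditions.

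For (c), the plan is to transport compatibility across the equivalence $\Dscr^+(\Cscr^\heart) \simeq \Cscr^+$. For the forward direction, if $\Cscr_{\leq 0}$ is closed under filtered colimits in $\Cscr$, then a filtered diagram $\{X_i\}$ in $\Cscr_{\leq 0}$ has colimit in $\Cscr$ lying in $\Cscr_{\leq 0} \subseteq \Cscr^+$; testing the universal property against objects of $\Cscr^+$ shows this coincides with the colimit in $\Cscr^+$, which then yields the compatibility on $\Dscr^+(\Cscr^\heart)$. For the reverse direction, let $\{X_i\}$ be a filtered diagram in $\Cscr_{\leq 0}$, set $X = \colim_\Cscr X_i$, and let $X^+$ denote the colimit in $\Cscr^+$, which lies in $\Cscr_{\leq 0}$ by hypothesis. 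The natural map $X \to X^+$ factors through $\tau_{\leq 0} X$, and $\tau_{\leq 0} X \simeq X^+$ because both are colimits of $\{X_i\}$ in the reflective subcategory $\Cscr_{\leq 0}$. The task reduces to showing $X \to \tau_{\leq 0} X$ is an equivalence, equivalently that $\pi_n X = 0$ in $\Cscr^\heart$ for all $n \geq 1$.

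The hard part is this final vanishing. My plan is to detect it using injective objects of $\Cscr^\heart$. For an injective $I \in \Cscr^\heart$, the mapping spectrum $\MapSp_\Cscr(X, I[n])$ equals $\lim \MapSp_\Cscr(X_i, I[n])$ by the universal property of the colimit in $\Cscr$; inside $\Cscr^+ \simeq \Dscr^+(\Cscr^\heart)$, the individual terms admit an explicit description via injective resolutions, and the filtered colimit hypothesis on $\Dscr^+(\Cscr^\heart)$ controls the behavior of this limit. Combined with the enough injectives assumption, which ensures that elements of $\Cscr^\heart$ are detected by $\Hom_{\Cscr^\heart}(-, I)$ as $I$ ranges over injectives, this should force $\pi_n X = 0$ for $n \geq 1$. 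The delicate step is making the interchange of colimits and truncations rigorous via the given equivalence and the accessibility inherent to the construction of $\Dscr^+(\Cscr^\heart)$ from enough injectives.
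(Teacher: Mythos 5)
Parts (a) and (b) are correct and are essentially the paper's own argument: the functor $\Cscr\rightarrow h\Cscr$ preserves products and detects both vanishing and membership in $\Cscr_{\geq 0}$. The forward direction of (c) is fine, as is your reduction of the reverse direction to showing $\pi_nX=0$ for all $n\geq 1$, where $X=\colim_\Cscr X_i$; note, however, that you assert without justification that the colimit $X^+$ of the diagram exists in $\Cscr^+$ (this holds in the intended applications, where $\Cscr^\heart$ is Grothendieck abelian, but it is not a formal consequence of the stated hypotheses, and your argument does use it).

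The genuine gap is the ``hard part,'' which is the entire content of the reverse direction and is left at the level of ``this should force $\pi_nX=0$.'' Worse, the mechanism you sketch would not work as stated: $\Hom_{h\Cscr}(X,I[n])=\pi_0\lim_i\MapSp_\Cscr(X_i,I[n])$ is the $\pi_0$ of a cofiltered limit of spectra, and although each term is $n$-connective (for $X_i\in\Cscr_{\leq 0}$, $I$ injective and $m\geq 1$ one has $\Hom_{h\Cscr}(X_i,I[m])\cong\Hom_{\Cscr^\heart}(\pi_mX_i,I)=0$, using $\Cscr^+\simeq\Dscr^+(\Cscr^\heart)$), the homotopy of such a limit picks up higher derived inverse limits of the systems $\Hom_{\Cscr^\heart}(\pi_{n-s}X_i,I)$, and the compatibility hypothesis on $\Dscr^+(\Cscr^\heart)$ says nothing about these; no ``interchange of colimits and truncations'' will make them go away. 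The repair is an observation you already have the pieces for: since $I[n]$ lies in $\Cscr^+$ and $X^+$ is the colimit of the diagram formed in the full stable subcategory $\Cscr^+$, the universal property of $X^+$ applied to the target $I[n]$ identifies $\lim_i\MapSp_\Cscr(X_i,I[n])$ with $\MapSp_\Cscr(X^+,I[n])$; as $X^+\simeq\tau_{\leq 0}X$ is coconnective, injectivity of $I$ and the equivalence $\Cscr^+\simeq\Dscr^+(\Cscr^\heart)$ give $\Hom_{h\Cscr}(X,I[n])=0$ for $n\geq 1$. Combine this with the detection isomorphism $\Hom_{h\Cscr}(X,I[n])\cong\Hom_{\Cscr^\heart}(\pi_nX,I)$, valid for arbitrary $X$ because $\Hom_{h\Cscr}(X,I[n])\cong\Hom_{h\Cscr}(\tau_{\leq n}X,I[n])$ by the $t$-structure axioms and $\tau_{\leq n}X\in\Cscr^+$, and with enough injectives, to conclude $\pi_nX=0$. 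Once repaired this way your route genuinely differs from the paper's, which instead argues that filtered colimits of bounded above objects are bounded above by writing them as cofibers of maps between coproducts taken in $\Cscr^+$; the injective-detection argument avoids that coproduct bookkeeping, but both arguments rest on the relevant colimits existing in $\Cscr^+$.
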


\begin{proof}
    Point (a) is clear. For point (b), let $X$ be an object of $\Cscr$ and $\{Y_i\}$ be a
    collection of objects of $\Cscr$. Note that
    \begin{align*}
        \Hom_{h\Cscr}(X,\prod_iY_i)&\iso\pi_0\Map_\Cscr(X,\prod_i Y_i)\\
        &\iso\pi_0\prod_i\Map_\Cscr(X,Y_i)\\
        &\iso\prod_i\pi_0\Map_\Cscr(X,Y_i)\\
        &\iso\prod_i\Hom_{h\Cscr}(X,Y_i),
    \end{align*}
    which shows that $\Cscr\rightarrow h\Cscr$ preserves products. (The same
    argument shows that it preserves coproducts, which we will use below.) Now,
    given a product $\prod_iY_i$ of objects $Y_i$ in $\Cscr_{\geq 0}$, the
    product $\prod_i Y_i$ is in $\Cscr_{\geq 0}$ if and only if its image
    in $h\Cscr$ is in $h\Cscr_{\geq 0}$. This proves (b).

    To prove (c), it is enough to prove that in general a $t$-structure on
    $\Cscr$ is compatible with filtered colimits if and only if the
    $t$-structure on $\Cscr^+$ is. Suppose that the $t$-structure on $\Cscr^+$
    is compatible with filtered colimits. The inclusion
    $\Cscr^+\hookrightarrow\Cscr$ preserves all coproducts that exist in
    $\Cscr^+$. It follows that filtered colimits of bounded above objects are
    bounded above (since these may be computed as a cofiber of a map between
    coproducts of bounded above objects), so that $\Cscr$ is compatible with filtered colimits. The
    other direction is clear.
\end{proof}

\section{Proofs}\label{sec:proofs}

We will repeatedly use the next lemma. Recall that if $\Cscr$ is a pointed
$\infty$-category with finite limits, the $\infty$-category of spectrum objects $\Sp(\Cscr)$ is
given as the limit of the diagram
$\cdots\rightarrow\Cscr\xrightarrow{\Omega}\Cscr\xrightarrow{\Omega}\Cscr$.

\begin{lemma}\label{lem:sp}
    Let $\Cscr$ and $\Dscr$ be stable $\infty$-categories with
    right complete $t$-structures. If $\Cscr_{\geq 0}\we\Dscr_{\geq
    0}$, then $\Cscr\we\Dscr$.
\end{lemma}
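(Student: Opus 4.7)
The plan is to realize each of $\Cscr$ and $\Dscr$ as the spectrum objects of its connective part and then conclude by functoriality of $\Sp(-)$. By the definition of right completeness, the natural map
$$\Cscr \xrightarrow{\simeq} \lim\left(\cdots \to \Cscr_{\geq -2} \xrightarrow{\tau_{\geq -1}} \Cscr_{\geq -1} \xrightarrow{\tau_{\geq 0}} \Cscr_{\geq 0}\right)$$
is an equivalence, and similarly for $\Dscr$. Note that $\Cscr_{\geq 0}$ has finite limits (it is the connective part of a $t$-structure on a stable $\infty$-category, cf.\ Remark~(3) following Definition~\ref{def:prestable}), so that the spectrum object construction $\Sp(\Cscr_{\geq 0})$ is defined.

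The first step is to reindex the right-completeness tower. Since $\Sigma$ is an autoequivalence of the stable $\infty$-category $\Cscr$, it restricts to equivalences $\Sigma^n \colon \Cscr_{\geq 0} \xrightarrow{\simeq} \Cscr_{\geq -n}$. Under these identifications the transition map $\tau_{\geq -n} \colon \Cscr_{\geq -(n+1)} \to \Cscr_{\geq -n}$ corresponds to the endofunctor of $\Cscr_{\geq 0}$ sending $X$ to $\Sigma^n \tau_{\geq -n} \Omega^{n+1} X$. Using that truncation commutes with shifts, this simplifies to $\Omega \tau_{\geq 1} X$, which is exactly the loop functor $\Omega_{\Cscr_{\geq 0}}$ of the prestable $\infty$-category $\Cscr_{\geq 0}$. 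Consequently,
$$\Cscr \simeq \lim\left(\cdots \to \Cscr_{\geq 0} \xrightarrow{\Omega} \Cscr_{\geq 0} \xrightarrow{\Omega} \Cscr_{\geq 0}\right) = \Sp(\Cscr_{\geq 0}),$$
and similarly $\Dscr \simeq \Sp(\Dscr_{\geq 0})$.

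With both sides so identified, the hypothesis $\Cscr_{\geq 0} \simeq \Dscr_{\geq 0}$ induces, by functoriality of $\Sp$ on pointed $\infty$-categories with finite limits, an equivalence $\Sp(\Cscr_{\geq 0}) \simeq \Sp(\Dscr_{\geq 0})$; chaining gives $\Cscr \simeq \Dscr$. The only genuinely non-formal step is verifying that the reindexed transition map agrees with the prestable loop functor $\Omega_{\Cscr_{\geq 0}}$; this is straightforward bookkeeping with truncations and shifts, and is the only real content of the argument.
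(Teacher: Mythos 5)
Your argument is exactly the paper's: the proof there is the one-line observation that right completeness gives $\Cscr\simeq\Sp(\Cscr_{\geq 0})$ (and likewise for $\Dscr$), so the equivalence of connective parts yields $\Cscr\simeq\Dscr$. You have simply spelled out the reindexing of the right-completeness tower that identifies the transition maps with $\Omega_{\Cscr_{\geq 0}}=\tau_{\geq 0}\Omega$, and that verification is correct.
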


\begin{proof}
    In this case, right completeness implies that $\Cscr\we\Sp(\Cscr_{\geq 0})$
    and similarly for $\Dscr$.
\end{proof}

We use Proposition~\ref{prop:ba} and Lemma~\ref{lem:sp} to prove Theorem~\ref{mt:2}. 

\begin{theorem}\label{thm:2}
    Let $\Ascr$ be Grothendieck abelian. Then, the triangulated category
    $$\D(\Ascr)\hspace{10pt}$$
    admits a unique presentable $\infty$-categorical enhancement.
\end{theorem}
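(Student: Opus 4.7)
Given a stable presentable $\infty$-category $\Cscr$ equipped with a triangulated equivalence $h\Cscr\we\D(\Ascr)$, my plan is to show $\Cscr\we\Dscr(\Ascr)$ by combining Lemma~\ref{lem:sp} with the uniqueness characterization of $\Dscr(\Ascr)_{\geq 0}$ mentioned in the introduction: it is the unique separated, $0$-complicial Grothendieck prestable $\infty$-category with heart equivalent to $\Ascr$. The first step is to use Lemma~\ref{lem:t} to lift the standard $t$-structure on $\D(\Ascr)$ to $\Cscr$, obtaining $\Cscr^\heart\we\Ascr$. By Lemma~\ref{lem:detection}(a), $\Cscr$ is left and right separated, and a dualized version of the argument in Lemma~\ref{lem:detection}(b) shows that $\Cscr_{\leq 0}$ is closed under countable coproducts in $\Cscr$. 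Proposition~\ref{prop:sepcomplete}(1) then yields right completeness of $\Cscr$, so Lemma~\ref{lem:sp} reduces the problem to producing an equivalence of prestable $\infty$-categories $\Cscr_{\geq 0}\we\Dscr(\Ascr)_{\geq 0}$.

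To apply the uniqueness theorem I must verify three properties of $\Cscr_{\geq 0}$. Separation of $\Cscr_{\geq 0}$ is equivalent to left separation of $\Cscr$, already established. The $0$-complicial property transfers from the homotopy category: for any $Y\in\Cscr_{\geq 0}$, the classical fact that $\D(\Ascr)_{\geq 0}$ is $0$-complicial supplies a map $[X]\to [Y]$ with $X\in\Ascr$ inducing a surjection on $\pi_0$; this map lifts to $\Cscr$ since $\pi_0\Map_\Cscr(X,Y)\iso\Hom_{h\Cscr}(X,Y)$, and the $\pi_0$ condition persists since it is measured in the common heart.

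The remaining property is Grothendieck prestability, which requires the $t$-structure on $\Cscr$ to be both accessible and compatible with filtered colimits. Here Proposition~\ref{prop:ba} plays the essential role: since $\Ascr$ is Grothendieck abelian it has enough injectives, so Proposition~\ref{prop:ba} produces a canonical equivalence $\Dscr^+(\Ascr)\we\Cscr^+$. The standard $t$-structure on $\Dscr^+(\Ascr)$ is compatible with filtered colimits, so Lemma~\ref{lem:detection}(c) promotes this compatibility to the $t$-structure on $\Cscr$. For accessibility, I would pick a generator $G$ of $\Ascr$, lift it to $\Cscr^\heart\subseteq\Cscr_{\geq 0}$, and invoke Lemma~\ref{lem:access}: the full subcategory of $\Cscr_{\geq 0}$ generated by $G$ under colimits and extensions has $h\Cscr_{\geq 0}\we\D(\Ascr)_{\geq 0}$ as its homotopy category (by the classical generation statement), and since a full subcategory is determined by its objects this subcategory must equal $\Cscr_{\geq 0}$ itself. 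Combining separation, $0$-compliciality, and Grothendieck prestability, the uniqueness theorem provides $\Cscr_{\geq 0}\we\Dscr(\Ascr)_{\geq 0}$, and Lemma~\ref{lem:sp} finishes the proof.

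The main obstacle I anticipate is the accessibility step. Translating the classical statement ``$\D(\Ascr)_{\geq 0}$ is generated by $G$'' into the $\infty$-categorical hypothesis required by Lemma~\ref{lem:access} demands care, because ``closure under colimits'' has no intrinsic meaning in the triangulated category $h\Cscr$, so one must genuinely argue at the $\infty$-categorical level that $G$ generates $\Cscr_{\geq 0}$ under $\Cscr$-colimits and extensions. The rest of the proof is largely a matter of assembling the detection lemma and the separation/completion results already in the excerpt.
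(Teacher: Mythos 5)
Your overall architecture coincides with the paper's proof of Theorem~\ref{thm:2}: lift the $t$-structure along the equivalence (Lemma~\ref{lem:t}), get right completeness from Lemma~\ref{lem:detection} and Proposition~\ref{prop:sepcomplete}, identify $\Cscr^+\we\Dscr^+(\Ascr)$ via Proposition~\ref{prop:ba}, transfer compatibility with filtered colimits by Lemma~\ref{lem:detection}(c), check separatedness and $0$-compliciality on the homotopy category, invoke the uniqueness of the separated $0$-complicial Grothendieck prestable $\infty$-category with heart $\Ascr$ from \cite[C.5.4.5]{sag}, and finish with Lemma~\ref{lem:sp}. The one place you deviate is the accessibility step, and that is also where your argument has a gap --- one you flag yourself.

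As written, the assertion that the full subcategory of $\Cscr$ generated by $G$ under colimits and extensions ``has $h\Cscr_{\geq 0}\we\D(\Ascr)_{\geq 0}$ as its homotopy category by the classical generation statement'' is not justified: the classical statement concerns colimits in $\Dscr(\Ascr)$, and a triangulated equivalence $h\Cscr\we\D(\Ascr)$ does not allow you to lift diagrams and their colimits, so closure under $\Cscr$-colimits is not a priori visible in $h\Cscr$; this is exactly the kind of transfer the whole paper is careful about. The step can be repaired --- for instance, a full subcategory of a stable presentable $\infty$-category containing $0$ and closed under coproducts and cofibers is closed under all small colimits, and coproducts and cofiber sequences \emph{are} detected by the triangulated structure of $h\Cscr$; alternatively one can argue intrinsically in $\Cscr$, building every connective object from $G$ using $0$-compliciality, closure of each $\Cscr_{\geq n}$ under colimits, and left separatedness --- but you have not supplied either argument, so Lemma~\ref{lem:access} is not yet applicable. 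The paper sidesteps this entirely: once Proposition~\ref{prop:ba} gives $\Cscr^+\we\Dscr^+(\Ascr)$, the subcategory $\Cscr_{\leq 0}\we\Dscr(\Ascr)_{\leq 0}$ is presentable, and \cite[1.4.4.13]{ha} (cf.\ Definition~\ref{def:tomnibus}(d)) then yields presentability of $\Cscr_{\geq 0}$; this is precisely where the presentability hypothesis on $\Cscr$ enters. Since you already have $\Cscr^+\we\Dscr^+(\Ascr)$ in hand for the filtered-colimit step, that is the shorter, fully rigorous route to accessibility, and with that substitution your proof agrees with the paper's.
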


\begin{proof}
    That $\D(\Ascr)$ admits a presentable $\infty$-categorical model
    $\Dscr(\Ascr)$ is~\cite[1.3.5.21]{ha}. Lurie proves that the $t$-structure is
    accessible, right complete, left separated, and compatible with filtered
    colimits.
    Let $\Cscr$ be a stable presentable enhancement of $\D(\Ascr)$.
    Then, $\Cscr$ admits a $t$-structure, which is right complete by
    Proposition~\ref{prop:sepcomplete} and Lemma~\ref{lem:detection}, and we find that the full subcategory
    $\Cscr^+\subseteq\Cscr$ of bounded above objects is equivalent to
    $$\Dscr^+(\Ascr)$$
    using Proposition~\ref{prop:ba}. In particular, $\Cscr_{\leq 0}$ is presentable. It
    follows from~\cite[1.4.4.13]{ha} that $\Cscr_{\geq 0}$ is presentable (this is where we
    use presentability of $\Cscr$). Moreover, by
    Lemma~\ref{lem:detection}, the $t$-structure on $\Cscr$ is compatible with filtered
    colimits since this is true for $\Dscr(\Ascr)$. It follows that $\Cscr_{\geq 0}$ is Grothendieck prestable. It is
    $0$-complicial since this may be checked on the homotopy category. Finally,
    it is also left separated by Lemma~\ref{lem:detection} again. 
    But, by~\cite[C.5.4.5]{sag}, $\Dscr(\Ascr)_{\geq 0}$ is the unique $0$-complicial separated
    Grothendieck prestable $\infty$-category with heart $\Ascr$. So, we have
    $\Dscr(\Ascr)_{\geq 0}\we\Cscr_{\geq 0}$ which finishes the proof by
    Lemma~\ref{lem:sp}.
\end{proof}

A weaker version of this theorem appears as~\cite[C.5.4.11]{sag}. We just check
that on any presentable enhancement of $\Cscr$, the induced
$t$-structure shares the nice $\infty$-categorical properties of the $t$-structure on
$\Dscr(\Ascr)$.

\begin{corollary}
    If $\Ascr$ is Grothendieck abelian and $\D(\Ascr)$ is compactly generated, then
    $\D(\Ascr)$ admits a unique $\infty$-categorical enhancement.
\end{corollary}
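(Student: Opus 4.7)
The plan is to reduce this to Theorem~\ref{thm:2}, which has just been proved, by showing that under the compact generation hypothesis every $\infty$-categorical enhancement is automatically presentable. Both ingredients are already in hand in the excerpt.

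More precisely, suppose $\Cscr$ is any stable $\infty$-categorical enhancement of $\D(\Ascr)$, so that $h\Cscr\we\D(\Ascr)$ as triangulated categories. Since $\D(\Ascr)$ is assumed compactly generated, Proposition~\ref{prop:presentable} (quoting~\cite[1.4.4.2, 1.4.4.3]{ha}) applies verbatim to $\Cscr$ and yields that $\Cscr$ is presentable. Thus the class of $\infty$-categorical enhancements of $\D(\Ascr)$ coincides with the class of presentable $\infty$-categorical enhancements.

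Now apply Theorem~\ref{thm:2}: since $\Ascr$ is Grothendieck abelian, $\D(\Ascr)$ admits a unique presentable $\infty$-categorical enhancement, namely $\Dscr(\Ascr)$. Combined with the previous paragraph this shows that $\Cscr\we\Dscr(\Ascr)$ for any enhancement $\Cscr$, giving the desired uniqueness.

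There is no real obstacle here, since the entire argument is a bookkeeping combination of Proposition~\ref{prop:presentable} with Theorem~\ref{thm:2}; the only content is noticing that compact generation of the homotopy category upgrades an a priori non-presentable enhancement to a presentable one, which is exactly the point of Proposition~\ref{prop:presentable}. (The Warning following Proposition~\ref{prop:presentable} indicates why an analogous statement with ``compactly generated'' replaced by ``well generated'' is not available, so this really is the natural setting in which Theorem~\ref{mt:2} immediately implies the stronger uniqueness statement.)
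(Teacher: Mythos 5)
Your proposal is correct and coincides with the paper's own argument: the paper's proof is precisely the observation that Proposition~\ref{prop:presentable} forces any enhancement of the compactly generated category $\D(\Ascr)$ to be presentable, after which Theorem~\ref{thm:2} gives uniqueness. Nothing further is needed.
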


\begin{proof}
    In this case, any $\infty$-categorical enhancement is presentable by
    Proposition~\ref{prop:presentable}, so the statement follows from Theorem~\ref{thm:2}.
\end{proof}

Now, we prove Theorem~\ref{mt:1} and Corollary~\ref{maincor:1}

\begin{theorem}\label{thm:1}
    If $\Ascr$ is a locally coherent Grothendieck abelian category, then the unseparated derived category $\check{\D}(\Ascr)$ admits a unique
    $\infty$-categorical enhancement.
\end{theorem}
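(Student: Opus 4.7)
The plan is to imitate the proof of Theorem~\ref{thm:2}, with the crucial replacement of the uniqueness statement \cite[C.5.4.5]{sag} (which characterizes $\Dscr(\Ascr)_{\geq 0}$ as the unique $0$-complicial \emph{separated} Grothendieck prestable $\infty$-category with heart $\Ascr$) by an analogous characterization of $\cDscr(\Ascr)_{\geq 0}$ valid in the locally coherent setting. First, because $\Ascr$ is locally coherent, $\cD(\Ascr) \simeq \K(\Inj_\Ascr)$ is compactly generated, with compact generators coming from $\Ascr^\omega$ via injective resolutions (see \cite{krause-noetherian}). Hence any $\infty$-categorical enhancement $\Cscr$ of $\cD(\Ascr)$ is automatically presentable by Proposition~\ref{prop:presentable}. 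By Lemma~\ref{lem:t}, lift the standard $t$-structure on $\cD(\Ascr)$ to a $t$-structure on $\Cscr$ with heart $\Cscr^\heart \simeq \Ascr$.

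Next, Proposition~\ref{prop:ba} identifies the bounded above part $\Cscr^+ \simeq \Dscr^+(\Ascr)$, and exactly as in the proof of Theorem~\ref{thm:2}, this forces $\Cscr_{\geq 0}$ to be Grothendieck prestable: $\Cscr_{\leq 0}$ is presentable as the coconnective part of $\Dscr^+(\Ascr)$, so $\Cscr_{\geq 0}$ is presentable by \cite[1.4.4.13]{ha}; Lemma~\ref{lem:detection}(c) combined with the known compatibility of $\Dscr(\Ascr)$ with filtered colimits gives the corresponding compatibility for $\Cscr$. Moreover, $0$-compliciality is detected on the homotopy category, so $\Cscr_{\geq 0}$ is $0$-complicial.

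The decisive step is to invoke a characterization of $\cDscr(\Ascr)_{\geq 0}$ from \cite[Section~C.6]{sag}: when $\Ascr$ is locally coherent, $\cDscr(\Ascr)_{\geq 0}$ is the unique $0$-complicial Grothendieck prestable $\infty$-category with heart $\Ascr$ which is itself locally coherent in the prestable sense (compactly generated with compact objects of the heart recovering $\Ascr^\omega$). To verify this property for $\Cscr_{\geq 0}$, one uses that compactness in a presentable stable $\infty$-category is detected on its homotopy category, so $\Cscr^\omega \simeq \cD(\Ascr)^\omega$; the compact objects of $\cD(\Ascr)$ lying in the heart are precisely $\Ascr^\omega$ by local coherence. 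This identifies $\Cscr_{\geq 0} \simeq \cDscr(\Ascr)_{\geq 0}$.

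Finally, we conclude via Lemma~\ref{lem:sp}. The category $\Cscr$ is right separated because $\cD(\Ascr)$ is (Lemma~\ref{lem:detection}(a)), and $\Cscr_{\leq 0}$ is closed under countable coproducts since these are filtered colimits of finite direct sums and both operations preserve the $t$-structure; Proposition~\ref{prop:sepcomplete}(1) therefore gives right completeness of $\Cscr$, and similarly for $\cDscr(\Ascr)$. Lemma~\ref{lem:sp} then yields $\Cscr \simeq \cDscr(\Ascr)$. The main obstacle is pinpointing the correct uniqueness statement inside \cite{sag}: the heart $\Ascr$ alone does not distinguish $\cDscr(\Ascr)_{\geq 0}$ from $\Dscr(\Ascr)_{\geq 0}$ or $\wDscr(\Ascr)_{\geq 0}$, and the locally coherent hypothesis is precisely what supplies a compact-generation criterion singling out $\cDscr(\Ascr)_{\geq 0}$.
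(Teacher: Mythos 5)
The overall skeleton of your argument (compact generation of $\cD(\Ascr)$ forces presentability of any enhancement $\Cscr$; lifting the $t$-structure; Proposition~\ref{prop:ba} and Lemma~\ref{lem:detection} to make $\Cscr_{\geq 0}$ Grothendieck prestable and $0$-complicial; Lemma~\ref{lem:sp} to reduce to the connective parts) is exactly the paper's. But your ``decisive step'' rests on a uniqueness statement that you neither prove nor cite precisely, and it is not the one available in \cite{sag}. The characterizations Lurie actually provides are: separated $+$ $0$-complicial (\cite[C.5.4.5]{sag}, which picks out $\Dscr(\Ascr)_{\geq 0}$, not $\cDscr(\Ascr)_{\geq 0}$), complete $+$ weakly $0$-complicial (picking out $\wDscr(\Ascr)_{\geq 0}$), and \emph{anticomplete} $+$ $0$-complicial (\cite[C.5.5.19, C.5.5.20, C.5.8.8]{sag}, which is what singles out $\cDscr(\Ascr)_{\geq 0}$). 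Your proposed criterion --- ``compactly generated with compact objects of the heart recovering $\Ascr^\omega$'' --- is an assertion whose proof would itself be the mathematical content of the theorem; as stated it is not a result you can quote, and you give no argument for it.

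Even granting some such criterion, your verification for $\Cscr_{\geq 0}$ is incomplete. Detecting compactness on the homotopy category only identifies $h\Cscr^\omega$ with $\cD(\Ascr)^\omega$ as triangulated categories; it does not by itself show that the \emph{prestable} $\infty$-category $\Cscr_{\geq 0}$ is compactly generated with the right compact objects, i.e.\ that $\Ind(\Cscr^\omega_{\geq 0})\we\Cscr_{\geq 0}$. (Be careful also not to smuggle in an identification $\Cscr^\omega\we\Dscr^b(\Ascr^\omega)$ as $\infty$-categories: that is the uniqueness of enhancements of $\D^b(\Ascr^\omega)$, Corollary~\ref{cor:1}, which the paper deduces \emph{from} this theorem.) The way the paper closes this gap is to use local coherence via \cite[C.6.7.3]{sag}, giving $\cDscr(\Ascr)^\omega\we\Dscr^b(\Ascr^\omega)$, so that $h\Cscr^\omega$ carries a \emph{bounded} $t$-structure; this bounded $t$-structure lifts to $\Cscr^\omega$, one gets $\Ind(\Cscr^\omega_{\geq 0})\we\Cscr_{\geq 0}$, and then \cite[C.5.5.5]{sag} shows $\Cscr_{\geq 0}$ is anticomplete, at which point \cite[C.5.5.20]{sag} applies. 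Your proof needs either this anticompleteness argument or a genuine proof of the ``locally coherent prestable'' uniqueness statement you invoke; as written, the central step is missing. (Your final appeal to Proposition~\ref{prop:sepcomplete} for right completeness before using Lemma~\ref{lem:sp} is fine.)
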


\begin{proof}
    Lurie proves that there is an $\infty$-categorical enhancement
    in~\cite[Section~C.5.8]{sag}, but see also~\cite{krause-noetherian,krause-deriving}.
    Suppose that $\Cscr$ is an enhancement of $\cD(\Ascr)$. Since $\cD(\Ascr)$
    is compactly generated, $\Cscr$
    is presentable by Proposition~\ref{prop:presentable}. Using
    Proposition~\ref{prop:ba} we see that $\Cscr^+\we\cDscr^+(\Ascr)$ and in
    particular $\Cscr_{\leq 0}$ is presentable by~\cite[1.4.4.13]{ha}. Thus, $\Cscr_{\geq 0}$ is
    presentable. By Lemma~\ref{lem:detection}, the $t$-structure is compatible
    with filtered colimits. Thus, $\Cscr_{\geq 0}$ is Grothendieck prestable.
    Lurie proved in~\cite[C.5.5.20]{sag} that there is a unique anticomplete $0$-complicial
    Grothendieck prestable $\infty$-category $\cDscr(\Ascr)_{\geq 0}$ with heart $\Ascr$. Thus, to finish the
    proof, by Lemma~\ref{lem:sp}, it is enough to prove that $\Cscr_{\geq 0}$ is
    anticomplete and $0$-complicial. That $\Cscr_{\geq 0}$ is $0$-complicial follows
    immediately since we can detect this on the homotopy category.
    For anticompleteness, we argue as follows. Since $\Ascr^\omega$ is abelian, there is a
    natural equivalence
    $\cDscr(\Ascr)^\omega\we\Dscr^b(\Ascr^\omega)$ by~\cite[C.6.7.3]{sag}. As
    $h\Cscr^\omega\we h\cDscr(\Ascr)^\omega$, it follows that $\Cscr^\omega$ admits a
    bounded $t$-structure and that $\Ind(\Cscr^\omega_{\geq 0})\we\Cscr_{\geq 0}$. Since
    $\Ind$-completions of bounded $t$-structures have anticomplete Grothendieck connective
    parts, by~\cite[C.5.5.5]{sag}, we see that $\Cscr_{\geq 0}$ is anticomplete, as desired.
\end{proof}

\begin{corollary}\label{cor:1}
    If $\Ascr$ is a small abelian category, then $\D^b(\Ascr)$ admits a unique
    $\infty$-categorical enhancement.
\end{corollary}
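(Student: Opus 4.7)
My plan is to deduce Corollary~\ref{cor:1} from Theorem~\ref{thm:1} by passing to the Ind-completion. Since $\Ascr$ is a small abelian category, its Ind-completion $\Ind(\Ascr)$ is a locally coherent Grothendieck abelian category with $\Ind(\Ascr)^\omega \simeq \Ascr$, so Theorem~\ref{thm:1} furnishes a unique $\infty$-categorical enhancement $\cDscr(\Ind(\Ascr))$ of $\cD(\Ind(\Ascr))$. The equivalence $\cDscr(\Ind(\Ascr))^\omega \simeq \Dscr^b(\Ascr)$ of~\cite[C.6.7.3]{sag} already exhibits $\Dscr^b(\Ascr)$ as one enhancement of $\D^b(\Ascr)$.

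For uniqueness, let $\Cscr^b$ be an arbitrary $\infty$-categorical enhancement of $\D^b(\Ascr)$. By Lemma~\ref{lem:t}, the standard bounded $t$-structure on $\D^b(\Ascr)$ lifts to $\Cscr^b$, and the vanishing of negative Ext groups between objects of $\Ascr$ inside $\D^b(\Ascr)$ forces the heart $\Cscr^{b,\heart}$ to be a $1$-category equivalent to $\Ascr$. I would then form the Ind-completion $\Ind(\Cscr^b_{\geq 0})$ of the connective part, which is a Grothendieck prestable $\infty$-category with heart $\Ind(\Ascr)$. Exactly as at the end of the proof of Theorem~\ref{thm:1}, the boundedness of the $t$-structure on $\Cscr^b$ combined with~\cite[C.5.5.5]{sag} shows that $\Ind(\Cscr^b_{\geq 0})$ is anticomplete, and the $0$-complicial condition is detected on the homotopy category, where it holds for $\D^b(\Ascr)_{\geq 0}$.

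The key step is then Lurie's uniqueness result~\cite[C.5.5.20]{sag}: an anticomplete $0$-complicial Grothendieck prestable $\infty$-category is determined by its heart. Applied with heart $\Ind(\Ascr)$, this yields an equivalence $\Ind(\Cscr^b_{\geq 0}) \simeq \cDscr(\Ind(\Ascr))_{\geq 0}$. Passing to compact objects on both sides, and using that $\cDscr(\Ind(\Ascr))_{\geq 0}^\omega \simeq \Dscr^b(\Ascr)_{\geq 0}$ by~\cite[C.6.7.3]{sag}, gives $\Cscr^b_{\geq 0} \simeq \Dscr^b(\Ascr)_{\geq 0}$. Finally, since $\Cscr^b$ carries a bounded $t$-structure, every object lies in $\Sigma^{-n}\Cscr^b_{\geq 0}$ for some $n$, so $\Cscr^b$ is recovered as the Spanier--Whitehead category $\SWscr(\Cscr^b_{\geq 0})$, yielding $\Cscr^b \simeq \SWscr(\Dscr^b(\Ascr)_{\geq 0}) \simeq \Dscr^b(\Ascr)$.

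The main obstacle I foresee is arranging the heart identification $\Ind(\Cscr^b_{\geq 0})^\heart \simeq \Ind(\Ascr)$ with sufficient naturality for~\cite[C.5.5.20]{sag} to apply; once this compatibility is in place, the argument is essentially a reprise of the proof of Theorem~\ref{thm:1} in the bounded setting.
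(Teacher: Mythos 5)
Your proposal follows essentially the same route as the paper's own proof: lift the bounded $t$-structure to the enhancement, pass to $\Ind(\Cscr^b_{\geq 0})$, establish anticompleteness from boundedness via \cite[C.5.5.5]{sag} and $0$-compliciality, identify the heart with $\Ind(\Ascr)$, invoke Lurie's uniqueness of anticomplete $0$-complicial Grothendieck prestable $\infty$-categories with heart $\Ind(\Ascr)$, and then recover $\Cscr^b\we\Dscr^b(\Ascr)$ by passing to compact objects. The two points you flag are exactly the steps the paper fills in: the heart of $\Ind(\Cscr^b_{\geq 0})$ is identified with $\Ind(\Ascr)$ by an argument with the induced $t$-structure on $\Ind(\Cscr^b)$ (or by an adjoint-functor argument), and $0$-compliciality of the Ind-category is not literally ``detected on the homotopy category'' but is proved by writing any object as a $\pi_0$-surjective target of a coproduct of objects of $\Cscr^b_{\geq 0}$ and then covering each of those by heart objects using brutal truncations checked in $\D^b(\Ascr)$.
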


\begin{proof}
    To see that there is an $\infty$-categorical model, take $\cDscr(\Ind(\Ascr))^\omega$,
    which has homotopy category $\Dscr^b(\Ascr)$ by~\cite[Theorem~4.9]{krause-deriving}.
    Let $\Cscr$ be a stable $\infty$-categorical enhancement of $\D^b(\Ascr)$.
    Then, $\Cscr$ admits a bounded $t$-structure. Thus, $\Ind(\Cscr)$ is a
    compactly generated stable presentable $\infty$-category with a
    $t$-structure $(\Ind(\Cscr_{\geq 0}),\Ind(\Cscr_{\leq 0}))$
    (see~\cite[Proposition~2.13]{agh} or~\cite[C.2.4.3]{sag}). Necessarily,
    $\Ind(\Cscr_{\geq 0})$ is anticomplete
    by~\cite[C.5.5.5]{sag}. We claim that $\Ind(\Cscr_{\geq 0})$ is also
    $0$-complicial and that $\Ind(\Cscr_{\geq 0})^\heart\we\Ind(\Ascr)$, which
    is enough to show that $\Ind(\Cscr_{\geq 0})\we\cDscr(\Ind(\Ascr))_{\geq
    0}$ by~\cite[C.5.5.19]{sag} (recalling that $\cDscr(\Ind(\Ascr))_{\geq 0}$
    is shown in~\cite[C.5.8.8]{sag} to be the unique anticomplete $0$-complicial
    Grothendieck prestable $\infty$-category with heart $\Ind(\Ascr)$). Given
    the claim, Lemma~\ref{lem:sp} implies that
    $\Ind(\Cscr)\we\cDscr(\Ind(\Ascr))$ and hence that
    $\Cscr\we\Ind(\Dscr)^\omega\we\cDscr(\Ind(\Ascr))^\omega\we\Dscr^b(\Ascr)$.
    Thus, let $X$ in $\Ind(\Cscr_{\geq 0})$ be an object. As $\Ind(\Cscr_{\geq 0})$ is compactly generated,
    there is a set of objects $\{Y_i\}$ of $\Cscr_{\geq 0}$ and morphism
    $\oplus Y_i\rightarrow X$ inducing a surjection on $\pi_0$. For each $Y_i$
    we can choose $Z_i\in\Cscr^\heart\we\Ascr$ and a map $Z_i\rightarrow Y_i$ inducing
    a surjection on $\pi_0$ (this can be checked in $\D^b(\Ascr)$ where it is
    clear by using brutal truncations). Thus, take the composition $\oplus
    Z_i\rightarrow\oplus Y_i\rightarrow X$. This proves that $\Ind(\Cscr_{\geq
    0})$ is $0$-complicial. The proof
    of~\cite[Proposition~2.13]{agh} proves that the $t$-structure on
    $\Ind(\Cscr)$ has connective part $\Ind(\Cscr_{\geq 0})$ and coconnective
    part $\Ind(\Cscr_{\leq 0})$. It follows that the heart of $\Ind(\Cscr_{\geq
    0})$ is $\Ind(\Cscr^\heart)\we\Ind(\Ascr)$, as desired. Here is another argument.
    We have a fully faithful colimit preserving exact functor
    $F\colon\Ind(\Ascr)\rightarrow\Ind(\Cscr_{\geq 0})^\heart$ and
    moreover every object of $\Ind(\Cscr_{\geq 0})^\heart$ receives a surjective map from
    an object in the essential image by the $0$-compliciality argument above.
    Let $U$ denote the right adjoint to $F$ (which exists by the adjoint
    functor theorem) and let $Y\in\Ind(\Cscr_{\geq 0})^\heart$. It is enough to prove
    that $UFY\rightarrow Y$ is an isomorphism. The fact that there exists a
    surjection $FX\rightarrow Y$ for some $X$ in $\Ind(\Ascr)$ implies that
    $UFY\rightarrow Y$ is surjective. Let $K$ be the kernel, so we have an exact sequence
    $$0\rightarrow K\rightarrow FUY\rightarrow Y\rightarrow 0$$ in
    $\Ind(\Cscr_{\geq 0})^\heart$. Applying $U$ and using that it is left exact, we get
    an exact sequence $$0\rightarrow UK\rightarrow UFUY\rightarrow UY.$$
    Since $UFUY\iso UY$, we see that $UK\we 0$. Let $FZ\rightarrow K$ be a
    surjection. It factors through maps $FZ\rightarrow FUK\rightarrow K$. Since
    $UK=0$, we see that the surjection factors through $0$ so that $K\we 0$.
    This is what we wanted to show.
\end{proof}

Finally, we prove Theorem~\ref{mt:3} and its corollaries.
Let $\Cscr$ be a prestable $\infty$-category.
Recall that we say that
$\Cscr$ is {\bf $0$-complicial} if
every for every object $X\in\Cscr$ there is an object
$Y\in\Cscr^\heart$ and a map $Y\xrightarrow{u} X$ such that the cofiber of $u$, computed in
$\SWscr(\Cscr)$, is in $\Cscr_{\geq 1}\we\Cscr[1]\subseteq\SWscr(\Cscr)$.

\begin{theorem}\label{thm:3}
    Let $\Cscr$ be a small idempotent complete prestable $\infty$-category. If $\Cscr$ is
    $0$-complicial, then the triangulated category $h\SWscr(\Cscr)$ admits a unique $\infty$-categorical
    enhancement.
\end{theorem}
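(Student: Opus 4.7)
The plan is to imitate the strategy of Corollary~\ref{cor:1} and reduce to Lurie's uniqueness theorem for anticomplete $0$-complicial Grothendieck prestable $\infty$-categories,~\cite[C.5.5.20]{sag}. The idea is to pass from small $\infty$-categories to their Ind-completions, where a presentable uniqueness statement applies, and then descend back to compact objects using idempotent completeness.

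First I would identify $\SWscr(\Cscr)$ as a known $\infty$-category. Form the Grothendieck prestable $\infty$-category $\Ind(\Cscr)$; since $\Cscr$ is idempotent complete we have $\Ind(\Cscr)^\omega \simeq \Cscr$, and by~\cite[C.5.5.5]{sag} the Ind-completion is anticomplete. The hypothesis that $\Cscr$ is $0$-complicial implies the same for $\Ind(\Cscr)$: given $X \simeq \colim_i X_i \in \Ind(\Cscr)$, choose $0$-complicial approximations $Y_i \to X_i$ with $Y_i \in \Cscr^\heart$ and take the induced map $\bigoplus_i Y_i \to X$. The heart of $\Ind(\Cscr)$ is $\Ind(\Cscr^\heart)$, a Grothendieck abelian category. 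By~\cite[C.5.5.20]{sag} we conclude $\Ind(\Cscr) \simeq \cDscr(\Ind(\Cscr^\heart))_{\geq 0}$, and hence $\SWscr(\Cscr) \simeq \cDscr(\Ind(\Cscr^\heart))^\omega$ upon applying $\SWscr(-)$ and restricting to compact objects.

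Next, let $\Dscr'$ be any $\infty$-categorical enhancement of $h\SWscr(\Cscr)$; it is idempotent complete since $\SWscr(\Cscr)$ is. Transport $h\Cscr \subseteq h\SWscr(\Cscr)$ across $h\Dscr' \simeq h\SWscr(\Cscr)$ to a full subcategory $\Dscr'_{\geq 0} \subseteq \Dscr'$. I would verify in turn: that $\Dscr'_{\geq 0}$ is a prestable $\infty$-category (using that $h\Cscr$ is closed under finite colimits and suspension in $h\SWscr(\Cscr)$, and that fibers of maps to suspensions in $\Cscr$ are preserved by the inclusion into $\SWscr(\Cscr)$); that $(\Dscr'_{\geq 0})^\heart \simeq \Cscr^\heart$ (since $0$-truncation in a prestable category is detected by vanishing $\Hom$ in positive degrees, a triangulated condition); that $\Dscr' \simeq \SWscr(\Dscr'_{\geq 0})$ (via the universal property and the fact that every object of $h\Dscr' \simeq h\SWscr(\Cscr)$ is bounded above); and finally that $\Dscr'_{\geq 0}$ is $0$-complicial (a condition on cofibers in $\SWscr(\Dscr'_{\geq 0}) \simeq \Dscr'$, detectable on the homotopy category via the equivalence with $h\SWscr(\Cscr)$).

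Then $\Ind(\Dscr'_{\geq 0})$ is an anticomplete $0$-complicial Grothendieck prestable $\infty$-category with heart $\Ind(\Cscr^\heart)$, so~\cite[C.5.5.20]{sag} gives $\Ind(\Dscr'_{\geq 0}) \simeq \cDscr(\Ind(\Cscr^\heart))_{\geq 0} \simeq \Ind(\Cscr)$; applying Lemma~\ref{lem:sp} yields $\Ind(\Dscr') \simeq \Ind(\SWscr(\Cscr))$, and passing to compact objects gives $\Dscr' \simeq \SWscr(\Cscr)$. The main obstacle I anticipate is the third step above: verifying that $\Dscr'_{\geq 0}$, obtained only from triangulated data, really is a prestable $\infty$-category with the correct heart and $0$-complicial property. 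Since $\Cscr$ is not in general the connective part of a $t$-structure on $\SWscr(\Cscr)$ (it may lack finite limits), the standard detection lemmas for $t$-structures, such as Lemma~\ref{lem:detection}, do not apply directly, and one must instead use $0$-compliciality essentially to bootstrap from the triangulated equivalence $h\Dscr' \simeq h\SWscr(\Cscr)$ up to the required $\infty$-categorical control.
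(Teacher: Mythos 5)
Your outline agrees with the paper's in its setup (transport $\Cscr$ across the equivalence to a full subcategory $\Dscr'_{\geq 0}\subseteq\Dscr'$, check it is prestable and $0$-complicial with $(\Dscr'_{\geq 0})^\heart\we\Cscr^\heart$ and $\Dscr'\we\SWscr(\Dscr'_{\geq 0})$, and reduce to comparing Ind-completions), and those detection steps are indeed fine. The gap is the assertion that the heart of $\Ind(\Cscr)$ is $\Ind(\Cscr^\heart)$, ``a Grothendieck abelian category.'' In the generality of Theorem~\ref{thm:3} the heart $\Cscr^\heart$ is merely additive, $\Ind(\Cscr^\heart)$ need not even be abelian, and the identification is false. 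Concretely, take $\Cscr=\Perfscr(R)_{\geq 0}$ with $R=k[x]/(x^2)$: this is a small, idempotent complete, $0$-complicial prestable $\infty$-category, $\Ind(\Cscr)\we\Dscr(R)_{\geq 0}$ has heart all of $\Mod_R$, but $\Cscr^\heart$ is the category of finitely generated projective $R$-modules, so $\Ind(\Cscr^\heart)$ is the category of flat ($=$ free) $R$-modules --- a proper, non-abelian subcategory; in particular $\cDscr(\Ind(\Cscr^\heart))$ does not even make sense as written. The analogous identification does hold in the situation of Corollary~\ref{cor:1}, where the heart is abelian and every object is a finite extension of shifted heart objects, and the paper proves it there by a separate argument; it is exactly what breaks for general $0$-complicial prestable $\Cscr$. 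Nor can you repair this by arguing directly that $\Ind(\Cscr)^\heart\we\Ind(\Dscr'_{\geq 0})^\heart$ and then quoting \cite[C.5.5.20]{sag}: the heart of an Ind-completion is computed from mapping spaces between ind-objects (limits over pro-systems of full mapping spaces), and there is no evident way to read it off from the triangulated equivalence $h\Dscr'\we h\SWscr(\Cscr)$ --- this is precisely the delicate point of the theorem, and is where the paper's own first attempt had a gap.

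The paper's actual proof sidesteps any identification of $\Ind(\Cscr)^\heart$. Using $0$-compliciality, $\Cscr^\heart$ (respectively $\Dscr'^{\heart}_{\geq 0}$) is a set of generators of $\Ind(\Cscr)$ (respectively $\Ind(\Dscr'_{\geq 0})$), so the $\infty$-categorical Gabriel--Popescu theorem \cite[C.2.1.6]{sag} exhibits both as left exact localizations of the same category $\Pscr_\Sigma(\Cscr^\heart)$; the theorem then follows by showing the two strongly saturated classes of inverted morphisms coincide. The key point making this possible is that the cofiber of each unit map $U_Y[n]\rightarrow U_{Y[n]}$ is a finite iterated extension of the functors $X\mapsto\Hom_{h\Cscr}(X,Y[n])$, which are determined by the triangulated data and hence lie in both kernels; closure of the relevant class under extensions and filtered colimits, together with a filtration of an arbitrary object of $\Pscr_\Sigma(\Cscr^\heart)$ with graded pieces in $\Ind(\Cscr^\heart)[i]$, then propagates this to all unit maps. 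If you want to keep your outline, this localization comparison is the missing ingredient that must replace the heart identification and the appeal to \cite[C.5.5.20]{sag}.
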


\begin{proof}
    Let $\Escr$ be a stable $\infty$-category with an equivalence $h\Escr\we
    h\SWscr(\Cscr)$ and let $\Dscr\subseteq\Escr$ be the full subcategory of
    objects which correspond to the objects of $\Cscr$ under the equivalence.
    Then, $\Escr\we\SWscr(\Dscr)$. The equivalence $h\Escr\we h\SWscr(\Cscr)$ induces an equivalence
    $F\colon h\Cscr\we h\Dscr$. It will be enough to prove that
    $\Ind(\Cscr)\we\Ind(\Dscr)$. In that case,
    $\Cscr\we\Ind(\Cscr)^\omega\we\Ind(\Dscr)^\omega\we\Dscr$ and hence
    $\SWscr(\Cscr)\we\SWscr(\Dscr)\we\Escr$.

    Let $\Cscr^\heart$ be the full subcategory of
    $0$-truncated objects of $\Cscr$ and similarly for $\Dscr$. We
    evidently have an equivalence $F^\heart\colon\Cscr^\heart\we\Dscr^\heart$
    induced by $F$. We claim that $\Cscr^\heart$ is a set of generators for
    $\Ind(\Cscr)$, which also implies that $\Ind(\Cscr)$ is $0$-complicial.
    Fix $Z\in\Ind(\Cscr)$. We have to prove that there is a set $\{X_i\}$ of
    objects of $\Cscr^\heart$ together with a map $\bigoplus X_i\rightarrow Z$
    which induces a surjection on $\pi_0$ in $\Ind(\Cscr)^\heart$.
    Since $\Ind(\Cscr)$ is the ind-completion of a small prestable
    $\infty$-category, there is a map $\bigoplus Y_i\rightarrow Z$ inducing a
    surjection on $\pi_0$ for some
    collection of objects $\{Y_i\}\subseteq\Cscr$. Now, since $\Cscr$ is
    $0$-complicial, for each $Y_i$ there is a map $X_i\rightarrow Y_i$ which is
    a surjection on $\pi_0$ and where $X_i\in\Cscr^\heart$. The composition
    $\bigoplus X_i\rightarrow\bigoplus Y_i\rightarrow Z$ is the desired map.
    The same argument works to show that $\Dscr^\heart$ forms a set of
    generators for $\Ind(\Dscr)$ and that $\Dscr$ is $0$-complicial.

    Since $\Cscr^\heart\we\Dscr^\heart$,
    the $\infty$-categorical Gabriel--Popescu theorem~\cite[C.2.1.6]{sag} implies
    that both $\Ind(\Cscr)$ and $\Ind(\Dscr)$ are left exact localizations of
    $\Pscr_\Sigma(\Cscr^\heart)\we\Fun^{\pi}(\Cscr^{\heart,\op},\Sscr)$,
    the $\infty$-category of finite product preserving functors
    from $\Cscr^{\heart,\op}$ to the $\infty$-category of spaces (see also
            Example~\ref{ex:prestable}(d)). Let
    $$L_\Cscr\colon\Pscr_\Sigma(\Cscr^\heart)\rightleftarrows\Ind(\Cscr)\colon
    U$$ and $$L_\Dscr\colon\Pscr_\Sigma(\Cscr^\heart)\rightleftarrows\Ind(\Dscr)\colon
    V$$ by the two adjunctions.
    Let $\Kscr_\Cscr$ be the kernel of $L_\Cscr$.
    Let $u\colon W\rightarrow Z$ be a morphism in $\Pscr_\Sigma(\Cscr^\heart)$.
    Then, $L_\Cscr(u)$ is an equivalence if and only if $L_\Cscr(\cofib(u))\we
    0$ (see~\cite[C.2.3.2]{sag}). Let $S_\Cscr$ be the class of morphisms $u$ of
    $\Pscr_\Sigma(\Cscr^\heart)$ such that $L_\Cscr(u)$ is an equivalence.
    Define $\Kscr_\Dscr$ and $S_\Dscr$ similarly. We will be done if we show
    that $S_\Cscr=S_\Dscr$ since in that case $\Ind(\Cscr)$ and $\Ind(\Dscr)$
    are the same localization of $\Pscr_\Sigma(\Cscr^\heart)$.

    The class $S_\Cscr$ is the strongly saturated class of
    morphisms generated (in the sense
    of~\cite[5.5.4.7]{htt}) by the unit maps $Z\rightarrow UF_\Cscr Z$ as $Z$
    ranges over the objects of $\Pscr_\Sigma(\Cscr^\heart)$.
    Thus, to see that $S_\Cscr=S_\Dscr$, it is enough to prove that
    $Z\rightarrow UF_\Cscr Z$ is in $S_\Dscr$ for all
    $Z\in\Pscr_\Sigma(\Cscr^\heart)$. The opposite inclusion will follow by
    symmetry.
   
    For $Y\in\Cscr^\heart$, the object
    $U_{Y[n]}=U(Y[n])\colon\Cscr^{\heart,\op}\rightarrow\Sp_{\geq 0}$ of
    $\Pscr_\Sigma(\Cscr^\op)$ is the functor
    $$X\mapsto\tau_{\geq 0}\MapSp_\Cscr(X,Y[n]).$$ Here we use that any
    prestable $\infty$-category is naturally enriched in spectra to obtain
    $\MapSp_\Cscr(X,Y[n])$ and then we take the connective cover.
    Note that $U_Y[n]$ is in $\Pscr_\Sigma(\Cscr^\heart)_{[n,n]}$ while
    $U_{Y[n]}$ is in $\Pscr_\Sigma(\Cscr^\heart)_{[0,n]}$. The unit map
    $$U_Y[n]\rightarrow UL_\Cscr(U_Y[n])\we U_{Y[n]}$$ induces an isomorphism on
    degree $n$ homotopy objects:
    $$\pi_n
    U_{Y[n]}(X)\iso\pi_n\Map_{\Cscr}(X,Y[n])\iso\pi_0\Map_\Cscr(X[n],Y[n])\iso\pi_nU_Y[n](X).$$
    By construction, $L_\Cscr U_Y[n]\rightarrow L_\Cscr U_{Y[n]}$ is an
    equivalence and hence $U_Y[n]\rightarrow U_{Y[n]}$ is in $S_\Cscr$ for each
    $n\geq 0$ and $Y\in\Cscr^\heart$. In the $n=1$ case, the cofiber of $U_Y[1]\rightarrow
    U_{Y[1]}$ is $\pi_0U_{Y[1]}$ in $\Pscr_\Sigma(\Cscr^\op)$ and is given by
    $$X\mapsto\Hom_{h\Cscr}(X,Y[1]).$$ In particular, $\pi_0U_{Y[1]}$ is in
    $\Kscr_\Cscr$. Observe that the cofiber of
    $V_Y[1]\rightarrow V_{Y[1]}$ is in $\Kscr_\Dscr$ and is equivalent to 
    $$X\mapsto\Hom_{h\Dscr}(X,Y[1]).$$ By using $F$, we see that
    $\Hom_{h\Cscr}(-,Y[1])\we\Hom_{h\Dscr}(-,Y[1])$ as functors on
    $\Cscr^{\heart,\op}$. It follows that 
    $\pi_0U_{Y[1]}$ is in $\Kscr_\Dscr$. This implies that $U_Y[1]\rightarrow U_{Y[1]}$ is in
    $S_\Dscr$. Continuing in this way, we see that for each $n>0$ the functor
    $$X\mapsto\Hom_{h\Cscr}(X,Y[n])$$ is in $\Kscr_\Cscr$ and in $\Kscr_\Dscr$.
    This implies that each $U_Y[n]\rightarrow U_{Y[n]}$ is in $S_\Dscr$ for
    $n\geq 0$ and $Y\in\Cscr^\heart$: indeed the cofiber is a finite iterated
    extension of the functors $$X\mapsto\Hom_{h\Cscr}(X,Y[n])$$ for $n>0$.

    To summarize the argument of the previous section, we saw that for each
    $Y\in\Cscr^\heart$ and each $n\geq 1$, the functor $X\mapsto\Hom_{h\Cscr}(X,Y[n])$ is an
    object of $\Kscr_\Cscr$. In fact, it is in the heart $\Kscr_\Cscr^\heart$. Then, we
    argued that it is also in $\Kscr_\Dscr^\heart$ using $F$. Finally, the cofiber $C$ of
    $U_Y[n]\rightarrow U_{Y[n]}$ has $$\pi_iC\iso\begin{cases}
    \Hom_{h\Cscr}(-,Y[n-i]) &\text{for $0\leq i<n$,}\\0&\text{otherwise.}\end{cases}$$
    Thus, the cofiber is in $\Kscr_\Dscr$.

    To complete the proof, we show now that for a general object
    $Z\in\Pscr_\Sigma(\Cscr^\heart)$, the unit map $u_Z\colon Z\rightarrow UL_\Cscr Z$ is
    in $S_\Dscr$. Let $\Rscr$ be the full subcategory of
    $\Pscr_\Sigma(\Cscr^\heart)$ on objects $Z$ such that $u_{Z[n]}$
    is in $S_\Dscr$ for all $n\geq 0$.  Since $U$ and $L_\Cscr$ commute with
    finite limits, $\Rscr$ is closed under taking certain fibers. Specifically,
    suppose that $W\xrightarrow{f}X[1]$ is a map where $W$ and $X$ are in
    $\Pscr_\Sigma(\Cscr^\heart)$. If $W$ and $X$ are in $\Rscr$, then so is the
    fiber $Z$ of $f$. Indeed, in this case, the fiber of $f[n]$ is equivalent
    to $Z[n]$ for $n\geq 0$ so the unit map of $Z[n]$ is an equivalence for all $n\geq 0$. This implies that $\Rscr$ is
    closed under extensions: given a cofiber sequence $X\rightarrow
    Z\rightarrow W$ where $W$ and $X$ are in $\Rscr$, we find
    that $Z$ is the fiber of a map $W\rightarrow X[1]$. Moreover, by
    construction, $\Pscr_\Sigma(\Cscr^\heart)\rightarrow\Ind(\Cscr)$ preserves
    compact objects and hence the right adjoint $U$ commutes with
    filtered colimits (see for example~\cite[5.5.7.2]{htt}). It follows that
    $\Rscr$ is closed under filtered colimits in $\Pscr_\Sigma(\Cscr^\heart)$.

    We know by the arguments above that $U_Y[n]\in\Rscr$ for all
    $n\geq 0$ and all $Y\in\Cscr^\heart$. By closure under filtered colimits,
    this implies that $U_Y[n]\in\Rscr$ for $Y\in\Ind(\Cscr^\heart)$ and all
    $n\geq 0$. Now, note that $\Pscr_\Sigma(\Cscr^\heart)$ is $0$-complicial
    and separated and has heart given by
    $\Mod_\Cscr=\Fun^\pi(\Cscr^{\heart,\op},\Mod_\ZZ)$, the abelian category of
    product preserving functors $\Cscr^{\heart,\op}\rightarrow\ZZ$. Hence,
    $\Pscr_\Sigma(\Cscr^\heart)\we\Dscr(\Mod_\Cscr)_{\geq 0}$.
    Since $\Mod_\Cscr$ has enough projective objects (given by the
    representable functors $U_Y$), 
    every object $Z\in\Pscr_\Sigma(\Cscr^\heart)$ admits an increasing
    exhaustive filtration $F_\star Z$ where $F_iZ=0$ for $i<0$ and
    $\gr_i^FZ\in\Ind(\Cscr^\heart)[i]$ for $i\geq 0$. We could even take a filtration with
    graded pieces shifted projective, but we do not need this here. (See also
    \cite[5.5.8.14]{htt}.) Write $\gr_i^FZ\we
    U_{Y_i}[i]$ for some $Y_i\in\Ind(\Cscr^\heart)$. By closure under
    extensions, inducting on $i\geq 0$, we see that each $F_iZ$ is in $\Rscr$ for finite $i$. Finally, by closure
    under filtered colimits, $\colim_iF_iZ\we Z$ is in $\Rscr$. This completes
    the proof.
\end{proof}

If $\Ascr$ is a small abelian category, then $\Dscr^b(\Ascr)_{\geq 0}$ is
$0$-complicial, so Corollary~\ref{cor:1} also follows from Theorem~\ref{thm:3}.
Corollary~\ref{maincor:3} follows immediately from Theorem~\ref{thm:3}.

\begin{corollary}\label{cor:3}
    If $X$ is quasi-compact, quasi-separated, and $0$-complicial, then $\Perf(X)$ admits a unique
    $\infty$-categorical enhancement.
\end{corollary}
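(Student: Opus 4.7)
The plan is to apply Theorem~\ref{thm:3} directly with $\Cscr=\Perfscr(X)_{\geq 0}=\Perfscr(X)\cap\Dscr_\qc(X)_{\geq 0}$. First I would record that this $\Cscr$ satisfies the hypotheses of Theorem~\ref{thm:3}: it is a small prestable $\infty$-category (since $\Perfscr(X)$ is essentially small for $X$ quasi-compact and quasi-separated, and $\Cscr$ is closed under finite colimits and extensions inside the stable $\infty$-category $\Perfscr(X)$, as noted in the discussion preceding Theorem~\ref{mt:3}); it is idempotent complete, inherited from the idempotent completeness of $\Perfscr(X)$ together with the fact that a retract of a connective object in $\Dscr_\qc(X)$ is connective; and it is $0$-complicial by the hypothesis that $X$ is $0$-complicial.

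Next I would identify $\SWscr(\Cscr)$ with $\Perfscr(X)$. This is the content of the remark in the introduction that, for $X$ quasi-compact and quasi-separated, every perfect complex is bounded below, so that the colimit
\[
\colim\bigl(\Perfscr(X)_{\geq 0}\xrightarrow{\Sigma}\Perfscr(X)_{\geq 0}\xrightarrow{\Sigma}\cdots\bigr)\we\Perfscr(X).
\]
In particular, $h\SWscr(\Cscr)\we\Perf(X)$ as triangulated categories.

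With these identifications in place, Theorem~\ref{thm:3} directly asserts that $\Perf(X)\we h\SWscr(\Cscr)$ admits a unique $\infty$-categorical enhancement, which is the desired conclusion. There is really no substantive obstacle here: the only small point to verify is the essential smallness and idempotent completeness of $\Perfscr(X)_{\geq 0}$, both of which are standard for qcqs schemes; every other ingredient is either part of the hypotheses or was already recorded in the preceding discussion.
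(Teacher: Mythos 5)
Your proposal is correct and matches the paper, which simply observes that Corollary~\ref{cor:3} follows immediately from Theorem~\ref{thm:3} applied to $\Perfscr(X)_{\geq 0}$, using that $\SWscr(\Perfscr(X)_{\geq 0})\we\Perfscr(X)$ since perfect complexes on a quasi-compact quasi-separated scheme are bounded below. The extra details you supply (smallness, idempotent completeness, prestability of $\Perfscr(X)_{\geq 0}$) are exactly the routine verifications the paper leaves implicit.
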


To find $X$ which is not separated but where $\Perfscr(X)_{\geq 0}$ is
$0$-complicial, consider the case of a regular but not separated scheme as in
the next example.

\begin{example}
    Let $X=\widetilde{\AA}^2$ denote the affine plane with the origin doubled. The scheme
    $X$ is quasi-compact and quasi-separated but is not semi-separated. It certainly does
    not have enough locally free sheaves. In fact, the category of locally free sheaves on
    $X$ is equivalent to the category of locally free sheaves on $\AA^2$ via pullback along the collapse
    map $X\rightarrow\AA^2$. On the other hand, $X$ is smooth, so that
    $\Perf(X)\we\D^b(\Coh(X))$. It follows that $X$ is $0$-complicial. In this
    case, uniqueness of enhancements of $\Perf(X)$ follows from
    Corollary~\ref{cor:1}.
\end{example}

Corollary~\ref{maincor:4} also follows directly from Theorem~\ref{thm:3}.

\begin{corollary}\label{cor:4}
    If $\Ascr$ is a Grothendieck abelian category such that $\D(\Ascr)$ is compactly
    generated and $\Dscr(\Ascr)_{\geq 0}\cap\Dscr(\Ascr)^\omega$ is $0$-complicial, then
    $\Dscr(\Ascr)^\omega$ admits a unique $\infty$-categorical enhancement.
\end{corollary}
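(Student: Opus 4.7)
The plan is to apply Theorem~\ref{thm:3} with $\Cscr=\Dscr(\Ascr)_{\geq 0}\cap\Dscr(\Ascr)^\omega$, the connective part of the compact objects. First I would verify that $\Cscr$ meets the hypotheses of Theorem~\ref{thm:3}: smallness and idempotent completeness are inherited from $\Dscr(\Ascr)^\omega$, and prestability follows because compact connective objects are closed under finite colimits and extensions inside the Grothendieck prestable $\infty$-category $\Dscr(\Ascr)_{\geq 0}$. The $0$-complicial hypothesis is given. Theorem~\ref{thm:3} then produces a unique $\infty$-categorical enhancement of $h\SWscr(\Cscr)$.

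The remaining step is to identify $h\SWscr(\Cscr)$ with $\Dscr(\Ascr)^\omega$ as triangulated categories. Since suspension is invertible on $\Dscr(\Ascr)$, the colimit defining $\SWscr(\Cscr)$ unfolds inside $\Dscr(\Ascr)$ to the union $\bigcup_n\Cscr[-n]\subseteq\Dscr(\Ascr)^\omega$, which is precisely the full subcategory of compact objects that become connective after finitely many suspensions, i.e.\ the compact objects that are bounded below in the $t$-structure. So the entire problem reduces to showing that every compact object of $\Dscr(\Ascr)$ is bounded below.

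The hard part will be this boundedness statement. I would establish it by showing that $\Cscr^\heart$, a set of compact objects contained in the abelian heart $\Ascr$, is already a set of compact generators for the triangulated category $\Dscr(\Ascr)$. Iterating the $0$-complicial $\pi_0$-surjections together with fibers produces for each $Z\in\Cscr$ a resolution by objects of $\Cscr^\heart$, so any $Y\in\Dscr(\Ascr)$ right-orthogonal to $\Cscr^\heart$ in all degrees is also right-orthogonal to $\Cscr$. Combining this with the identification $\Dscr(\Ascr)_{\geq 0}\simeq\Ind(\Cscr)$ and with the fact that every $V\in\Dscr(\Ascr)$ is a filtered colimit $V\simeq\colim_n\tau_{\geq -n}V$ of its bounded-below truncations (which follows from left separation plus compatibility of the $t$-structure with filtered colimits), the orthogonality extends to all of $\Dscr(\Ascr)$, forcing $Y=0$. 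Neeman's identification of the compact objects of a compactly generated triangulated category with the thick closure of a generating set then gives $\Dscr(\Ascr)^\omega$ as the thick closure of $\Cscr^\heart$ in $\Dscr(\Ascr)$, and since triangles and retracts shift homotopy amplitudes only by finite amounts, this thick closure lies in $\Dscr^b(\Ascr)$. In particular every compact object is bounded below, so $\SWscr(\Cscr)\simeq\Dscr(\Ascr)^\omega$ and the proof is complete.
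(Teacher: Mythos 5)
Your reduction is exactly the paper's own: apply Theorem~\ref{thm:3} to $\Cscr=\Dscr(\Ascr)_{\geq 0}\cap\Dscr(\Ascr)^\omega$ (your verification that $\Cscr$ is small, idempotent complete, prestable, and $0$-complicial is fine) and then identify $h\SWscr(\Cscr)$ with $\D(\Ascr)^\omega$, which comes down to every compact object of $\Dscr(\Ascr)$ being bounded below. That statement is true, but your argument for it has two genuine gaps. First, $\Cscr^\heart$ is the category of $0$-truncated objects of the prestable $\infty$-category $\Cscr$; these need not lie in $\Ascr$ nor be bounded above --- the paper explicitly warns, in the analogous case of $\Perfscr(X)_{\geq 0}$, that the inclusion $\Perfscr(X)_{\geq 0}\cap\Dscr_{\qc}(X)^\heart\subseteq\Perfscr(X)_{\geq 0}^\heart$ is expected to be strict and that the extra objects have higher homology. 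So the claim that the thick closure of $\Cscr^\heart$ lies in $\Dscr^b(\Ascr)$ is unjustified (for your purposes only boundedness below is needed, which does hold since $\Cscr^\heart\subseteq\Dscr(\Ascr)_{\geq 0}$ and the bounded-below objects form a thick subcategory, so this slip is repairable). Second, and more seriously, the identification $\Dscr(\Ascr)_{\geq 0}\we\Ind(\Cscr)$ is asserted without proof; it says precisely that the connective part of the standard $t$-structure is generated under filtered colimits by compact connective objects, which is the sort of statement the paper treats as unknown and likely false in general (compare the discussion of the perfect $t$-structure, where it is not known whether $\Ind(\Perfscr(X)_{\geq 0})\subseteq\Dscr_{\qc}(X)_{\geq 0}$ is an equivalence). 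Since your generation argument passes from orthogonality to $\Cscr$ to orthogonality to all of $\Dscr(\Ascr)_{\geq 0}$ through this equivalence, the argument as written does not close.

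Fortunately the boundedness statement needs none of this. For any $X\in\Dscr(\Ascr)$ one has $X\we\colim_n\tau_{\geq -n}X$: the $t$-structure is compatible with filtered colimits, so the cofiber of $\colim_n\tau_{\geq -n}X\rightarrow X$ has vanishing homology and is therefore zero in $\Dscr(\Ascr)$. If $X$ is compact, $\id_X$ then factors through some $\tau_{\geq -n}X$, so $X$ is a retract of $\tau_{\geq -n}X$ and hence lies in $\Dscr(\Ascr)_{\geq -n}$. Thus $\Dscr(\Ascr)^\omega=\bigcup_n\Cscr[-n]\we\SWscr(\Cscr)$, and Theorem~\ref{thm:3} applies directly, which is all that the paper's one-line deduction intends.
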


Finally, we prove Corollary~\ref{maincor:5}.

\begin{corollary}\label{cor:5}
    If $\Ascr$ is a small abelian category, then
    $\D^-(\Ascr)$ and $\D^+(\Ascr)$ admit unique $\infty$-categorical enhancements.
\end{corollary}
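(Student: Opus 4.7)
The plan is to deduce both statements from Theorem~\ref{thm:3} applied to a suitable prestable $\infty$-category. First I would reduce the $\D^+(\Ascr)$ case to the $\D^-(\Ascr)$ case: $\Ascr^{\op}$ is again a small abelian category and $\D^+(\Ascr) \simeq \D^-(\Ascr^{\op})^{\op}$, so passing to opposite $\infty$-categories turns a stable $\infty$-categorical enhancement of $\D^+(\Ascr)$ into one of $\D^-(\Ascr^{\op})$ and conversely. It therefore suffices to treat $\D^-(\Ascr)$.

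For $\D^-(\Ascr)$, I would take $\Cscr := \Dscr^-(\Ascr)_{\geq 0}$, the connective part of the standard dg-nerve enhancement. This is a small prestable $\infty$-category whose heart is $\Ascr$ and whose Spanier--Whitehead category is $\Dscr^-(\Ascr)$. The key structural input is that $\Cscr$ is $0$-complicial: given $X \in \Cscr$, I would choose a chain complex representative $[\cdots \to X_1 \to X_0]$ supported in nonnegative degrees and observe that the chain map $X_0 \to X_\bullet$ defined by the identity in degree $0$ exhibits $X_0 \in \Ascr$ as an object of the heart mapping to $X$ via the canonical surjection $X_0 \twoheadrightarrow H_0(X)$ on $\pi_0$. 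Passing to the idempotent completion $\Cscr^{\mathrm{ic}}$ preserves smallness, prestability, and $0$-compliciality (retracting a surjection is a surjection), and $\SWscr(\Cscr^{\mathrm{ic}}) \simeq \Dscr^-(\Ascr)^{\mathrm{ic}}$, so Theorem~\ref{thm:3} gives a unique $\infty$-categorical enhancement for $\D^-(\Ascr)^{\mathrm{ic}}$.

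Given two enhancements $\Escr_1, \Escr_2$ of $\D^-(\Ascr)$, I would idempotent-complete to produce enhancements $\Escr_i^{\mathrm{ic}}$ of $\D^-(\Ascr)^{\mathrm{ic}}$, which by the previous paragraph are equivalent. The equivalence $\Escr_1^{\mathrm{ic}} \simeq \Escr_2^{\mathrm{ic}}$ produced in the proof of Theorem~\ref{thm:3} is built via Gabriel--Popescu localization starting from the heart $\Escr_i^\heart \simeq \Ascr$, and the heart identification is the one supplied by the enhancement structures. Consequently this equivalence should respect the prescribed triangulated equivalences $h\Escr_i^{\mathrm{ic}} \simeq \D^-(\Ascr)^{\mathrm{ic}}$, and so restrict to an equivalence between the full subcategories corresponding to $\D^-(\Ascr) \subseteq \D^-(\Ascr)^{\mathrm{ic}}$, which are precisely $\Escr_1$ and $\Escr_2$.

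The hard part will be this last compatibility check: that the equivalence of idempotent completions from Theorem~\ref{thm:3} can be arranged to preserve the original non-completed subcategories. This requires inspecting how the equivalence there depends on the heart identification and verifying that it is compatible with the given enhancement equivalences on the level of homotopy categories. An alternative route that would avoid the subtlety altogether is to show directly that $\Dscr^-(\Ascr)_{\geq 0}$ is idempotent complete for any small abelian $\Ascr$, in which case Theorem~\ref{thm:3} applies without the intermediate step.
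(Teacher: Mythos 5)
Your core argument is the paper's: reduce $\D^+(\Ascr)$ to $\D^-(\Ascr)$ via $\Dscr^+(\Ascr)\we\left(\Dscr^-(\Ascr^\op)\right)^\op$, observe that $\Dscr^-(\Ascr)_{\geq 0}$ is $0$-complicial by brutal truncation, identify $\SWscr(\Dscr^-(\Ascr)_{\geq 0})\we\Dscr^-(\Ascr)$ (every object is bounded below, so some shift is connective), and invoke Theorem~\ref{thm:3}. The only place you deviate is the idempotent-completion detour, and that is exactly where the gap sits, as you yourself flag. Theorem~\ref{thm:3} is a bare uniqueness statement: applied to the completions it produces \emph{some} equivalence $\Escr_1^{\mathrm{ic}}\we\Escr_2^{\mathrm{ic}}$, but it asserts nothing about compatibility with the chosen triangulated identifications $h\Escr_i^{\mathrm{ic}}\we\D^-(\Ascr)^{\mathrm{ic}}$, so nothing in the statement forces the equivalence to carry the full subcategory $\Escr_1$ onto $\Escr_2$. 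Your remark that it ``should respect the prescribed triangulated equivalences'' would require reopening and strengthening the proof of Theorem~\ref{thm:3}, not merely quoting it; as written, the descent step does not follow.

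Fortunately, the ``alternative route'' you mention at the end is the correct completion of the argument, and it is true, so the detour can simply be deleted. A retract of a connective object is connective (its homology objects are direct summands), so it suffices to see that $\Dscr^-(\Ascr)$ is idempotent complete; moreover idempotent completeness of a small stable $\infty$-category can be tested on its homotopy category, since every object of the idempotent completion splits a homotopy idempotent on an object of the original category and splittings of a given homotopy idempotent in an additive category are unique up to isomorphism. Finally, the triangulated category $\D^-(\Ascr)$ is idempotent complete because it carries a $t$-structure for which every object is bounded on one side and whose heart $\Ascr$ is abelian, hence idempotent complete (a result of Le and Chen on Karoubianness of triangulated categories with one-sided bounded $t$-structures; the $\D^+$ case follows by passing to opposite categories). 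Thus $\Dscr^-(\Ascr)_{\geq 0}$ is a small idempotent complete $0$-complicial prestable $\infty$-category and Theorem~\ref{thm:3} applies directly --- which is precisely what the paper's one-line proof does, leaving the idempotent completeness implicit (note that the hypothesis is even omitted from the statement of Theorem~\ref{mt:3}). With that lemma supplied, your argument coincides with the paper's.
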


\begin{proof}
    If $\Ascr$ is a small abelian category, then $\Dscr^-(\Ascr)_{\geq 0}$ is
    $0$-complicial by using brutal truncations. Since
    $\Dscr^-(\Ascr)\we\SWscr(\Dscr^-(\Ascr)_{\geq 0})$, it follows from
    Theorem~\ref{thm:3} that $\D^-(\Ascr)$ admits a unique $\infty$-categorical
    enhancement. As $\Dscr^+(\Ascr)\we\left(\Dscr^-(\Ascr^\op)\right)^\op$, we
    see that $\Dscr^+(\Ascr)$ admits a unique $\infty$-categorical enhancement.
\end{proof}

\section{Discussion of the meta theorem}\label{sec:meta}

We briefly discuss Meta Theorem~\ref{meta:dg}. In general, one wants to simply
say the words ``all of our proofs now work $k$-linearly for any commutative
connective $\EE_\infty$-algebra $k$''. Applying this to the case where $k=\ZZ$,
we would obtain the previous results on the uniqueness of dg enhancements,
since pretriangulated dg categories over $\ZZ$ are equivalent to $\ZZ$-linear
stable $\infty$-categories.

However, we need to be more careful. Indeed, the heart of any stable
$\infty$-category with a $t$-structure or any prestable $\infty$-category is
automatically an additive category and is hence $\ZZ$-linear. Our proofs in many
places construct functors from this $1$-categorical information. If we want to
check that those functors are themselves $\ZZ$-linear, we need to do a little
more work.

\begin{theorem}\label{thm:canonical}
    If $\Cscr$ is an anticomplete or separated $0$-complicial Grothendieck prestable $\infty$-category,
    then the stable $\infty$-category $\Sp(\Cscr)$ admits a canonical
    $\ZZ$-linear structure.
\end{theorem}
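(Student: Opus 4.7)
The plan is to reduce to Lurie's uniqueness theorems and then exhibit a canonical $\Dscr(\ZZ)$-action in each case. Let $\Ascr=\Cscr^\heart$, which is a Grothendieck abelian category. If $\Cscr$ is separated $0$-complicial Grothendieck prestable, then \cite[C.5.4.5]{sag} provides a canonical equivalence $\Cscr\we\Dscr(\Ascr)_{\geq 0}$; if $\Cscr$ is anticomplete $0$-complicial Grothendieck prestable, then the combination of \cite[C.5.5.20]{sag} and \cite[C.5.8.8]{sag} yields a canonical equivalence $\Cscr\we\cDscr(\Ascr)_{\geq 0}$. Passing to spectrum objects, we therefore have a canonical equivalence $\Sp(\Cscr)\we\Dscr(\Ascr)$ or $\Sp(\Cscr)\we\cDscr(\Ascr)$, in either case natural in the data of the heart.

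The next step is to produce a canonical $\Dscr(\ZZ)$-action, in $\PrL$, on $\Dscr(\Ascr)$ and on $\cDscr(\Ascr)$ for any Grothendieck abelian $\Ascr$. The key observation is that $\Ascr$ is cocomplete and $\Ab$-enriched, so it admits a canonical copower action $\Ab\times\Ascr\rightarrow\Ascr$: for $M\in\Ab$ and $X\in\Ascr$ choose a free presentation $\bigoplus_I\ZZ\rightarrow\bigoplus_J\ZZ\rightarrow M\rightarrow 0$ and set $M\otimes X:=\coker\bigl(\bigoplus_I X\rightarrow\bigoplus_J X\bigr)$. This construction preserves colimits in each variable. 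By the universal properties of $\Dscr(\Ascr)_{\geq 0}$ and $\cDscr(\Ascr)_{\geq 0}$ as the unique separated (respectively anticomplete) $0$-complicial Grothendieck prestable $\infty$-categories with heart $\Ascr$, applied to each functor $M\otimes(-)$ for $M\in\Dscr(\ZZ)_{\geq 0}$, this copower structure lifts uniquely to a $\Dscr(\ZZ)_{\geq 0}$-action on $\Dscr(\Ascr)_{\geq 0}$ or $\cDscr(\Ascr)_{\geq 0}$, which passes through spectrum objects to the desired $\Dscr(\ZZ)$-linear structure on $\Sp(\Cscr)$.

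Canonicity follows because each step is determined up to contractible choice: the $\Ab$-enrichment is intrinsic to any additive category; the extension to a $\Dscr(\ZZ)$-action is controlled by the universal properties above; and the transport back to $\Sp(\Cscr)$ uses the uniqueness of the equivalence from step one. In particular, if $F\colon\Cscr\we\Cscr'$ is any equivalence of separated or anticomplete $0$-complicial Grothendieck prestable $\infty$-categories, then $F$ induces an equivalence of hearts $\Ascr\we\Ascr'$ and both $\ZZ$-linear structures on $\Sp(\Cscr)\we\Sp(\Cscr')$ coincide.

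The main obstacle is the coherence of the $\EE_\infty$-action at the $\infty$-categorical level: the copower is only a priori a $1$-categorical construction, and promoting it to a symmetric monoidal action of $\Dscr(\ZZ)$ requires that the assignment $M\mapsto M\otimes(-)$ be monoidal in $M$ at the level of $\infty$-categories, not just at the level of the homotopy category. This is where one must carefully invoke the functoriality of Lurie's universal property and the symmetric monoidal structure on $\PrL$; concretely, one shows that the composition $\Dscr(\ZZ)_{\geq 0}\otimes\Dscr(\Ascr)_{\geq 0}\xrightarrow{\text{action}}\Dscr(\Ascr)_{\geq 0}$ and its iterate factored through $\Dscr(\ZZ)_{\geq 0}^{\otimes 2}\otimes\Dscr(\Ascr)_{\geq 0}$ agree, again using uniqueness of the extension from $\Ab$ to $\Dscr(\ZZ)_{\geq 0}$.
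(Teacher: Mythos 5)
Your reduction to $\Cscr\we\Dscr(\Ascr)_{\geq 0}$ (resp.\ $\cDscr(\Ascr)_{\geq 0}$) with $\Ascr=\Cscr^\heart$ is fine, but the central step --- lifting the copower action of $\Ab$ on $\Ascr$ to a $\Dscr(\ZZ)_{\geq 0}$-action ``by the universal properties'' --- has a genuine gap. The universal property you invoke is the adjunction $\Dscr(-)_{\geq 0}\colon\Groth_0^\lex\rightleftarrows\Groth_\infty^{\lex,\sep}\colon(-)^\heart$ of \cite[C.5.4.5]{sag} (and its anticomplete analogue), and it lives entirely in the world of \emph{left exact} colimit-preserving functors: it produces, and gives uniqueness for, only left exact left adjoints out of $\Dscr(\Ascr)_{\geq 0}$ from left exact data on hearts. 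The functors you need to lift are not left exact: already $M\otimes(-)$ on $\Ascr$ fails to be left exact for non-flat $M$ (e.g.\ $M=\ZZ/p$), and the would-be action map $\Dscr(\ZZ)_{\geq 0}\otimes\Dscr(\Ascr)_{\geq 0}\rightarrow\Dscr(\Ascr)_{\geq 0}$ is not a morphism of $\Groth_\infty^{\lex,\sep}$ at all: for $\Ascr=\Mod_\ZZ$ it carries the heart object $\ZZ$ of $\Dscr(\ZZ\otimes_\SS\ZZ)_{\geq 0}\we\Dscr(\ZZ)_{\geq 0}\otimes\Dscr(\ZZ)_{\geq 0}$ to $\THH(\ZZ)$, which has homotopy in arbitrarily high degrees. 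So neither the existence of the lift nor the ``uniqueness of the extension from $\Ab$ to $\Dscr(\ZZ)_{\geq 0}$'' that your coherence argument leans on is supplied by that universal property; and since colimit-preserving functors out of $\Dscr(\Ascr)_{\geq 0}$ are not determined by their restrictions to the heart, there is no cheap substitute. This is precisely the ``most naive argument'' that the actual proof explains cannot work.

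The way around it is to dualize. Because $(-)^\heart$ is symmetric monoidal, the left adjoint $\Dscr(-)_{\geq 0}$ is only oplax symmetric monoidal, so $\Dscr(\ZZ)_{\geq 0}$ is an $\EE_\infty$-\emph{coalgebra} in $\Groth_\infty^{\lex,\sep}$ and $\Dscr(\Ascr)_{\geq 0}$ a comodule over it; the comodule structure maps $\Dscr(\Ascr)_{\geq 0}\rightarrow\Dscr(\ZZ)_{\geq 0}^{\otimes n}\otimes\Dscr(\Ascr)_{\geq 0}$ \emph{are} left exact (for $\Dscr(R)_{\geq 0}$ they are restriction of scalars along $\ZZ\otimes_\SS R\rightarrow R$) and hence are produced by the adjunction you quote. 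The substantive work is then to show that these maps preserve all limits --- via the Gabriel--Popescu presentation $\Dscr(\Ascr)_{\geq 0}\hookrightarrow\Dscr(R)_{\geq 0}$ and a right-adjointability check --- so that they admit left adjoints which assemble into a $\Dscr(\ZZ)_{\geq 0}$-module structure in $\Groth_\infty$ (whose morphisms need not be left exact); passing to spectrum objects then yields the $\Dscr(\ZZ)$-linear structure on $\Sp(\Cscr)$, and the anticomplete case is treated by exhibiting $\Cscr$ as a left exact localization of a separated one via \cite[C.5.8.12, C.5.8.13]{sag}. Until you replace your appeal to the universal property by an argument of this kind (or some other construction of the non-left-exact action functors together with their $\EE_\infty$-coherences), the proposal does not establish the theorem.
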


Theorem~\ref{thm:canonical} implies Meta Theorem~\ref{meta:dg}
because it shows that that a $\ZZ$-linear structure on a stable presentable
$\infty$-category with a separated or anticomplete $0$-complicial $t$-structure is not extra structure.
It also applies to the results in the cases of the small categories, as in
Corollary~\ref{maincor:1} or~\ref{maincor:3}, since the proofs pass through anticomplete
$0$-complicial or separated $0$-complicial Grothendieck prestable $\infty$-categories.

\begin{proof}[Proof of Theorem~\ref{thm:canonical}]
    We will first give the proof in the separated case.
    There is an adjunction
    $$\Dscr(-)_{\geq
    0}\colon\Groth_0^\lex\leftrightarrows\Groth_\infty^{\lex,\sep}\colon(-)^\heart,$$
    where the right adjoint is fully faithful and has essential image the full subcategory of
    $\Groth_\infty^\lex$ on the separated $0$-complicial Grothendieck prestable
    $\infty$-categories (see~\cite[C.5.4.5]{sag}).

    The category $\Groth_0^\lex$ is a symmetric monoidal category with unit
    $\Mod_\ZZ$. To see this, one uses~\cite[C.5.4.16]{sag}, which implies that if
    $\Ascr$ and $\Bscr$ are Grothendieck abelian categories, then so is
    $\Ascr\otimes\Bscr$, where the tensor product is computed in $\PrL$, the
    $\infty$-category of presentable $\infty$-categories and left adjoint functors.
    This gives a symmetric monoidal structure on $\Groth_0$ and it can be
    restricted to the subcategory $\Groth_0^\lex$ by the argument of~\cite[C.4.4.2]{sag}.
    With respect to the symmetric monoidal structures, $(-)^\heart$ is
    symmetric monoidal and the left adjoint $\Dscr(-)_{\geq 0}$
    is then naturally oplax symmetric monoidal. This presents some
    problems and means that we cannot use the most naive argument to give the
    proof of Theorem~\ref{thm:canonical}.

    The fact that $\Dscr(-)_{\geq 0}$ is oplax symmetric monoidal implies that
    $\Dscr(\ZZ)_{\geq 0}$ is not a commutative algebra
    object in $\Groth_\infty^{\lex,\sep}$ but rather an $\EE_\infty$-coalgebra object.
    This may seem a little strange, but consider the fact that the natural
    multiplication map
    $$\Dscr(\ZZ\otimes_\SS\ZZ)_{\geq 0}\we\Dscr(\ZZ)_{\geq 0}\otimes\Dscr(\ZZ)_{\geq 0}\rightarrow\Dscr(\ZZ)_{\geq
    0}$$
    is not in $\Groth_\infty^{\lex,\sep}$ as it is not left exact. Indeed, it takes
    $\ZZ$ in the heart of the left hand side to
    $\THH(\ZZ)\we\ZZ\otimes_{\ZZ\otimes_\SS\ZZ}\ZZ$ on the right hand
    side. Since $\THH(\ZZ)$ has non-zero homotopy groups in arbitrarily high
    degrees by~\cite{bokstedt}, it is not in the heart.

    The fact that the left adjoint is oplax implies that for any Grothendieck abelian
    category $\Ascr$, the Grothendieck prestable $\infty$-category $\Dscr(\Ascr)_{\geq 0}$
    is a comodule for $\Dscr(\ZZ)_{\geq 0}$ in $\Groth_\infty^{\lex,\sep}$. To see this,
    note that there is a natural equivalence $$\Ascr\we\left(\Dscr(\ZZ)_{\geq 0}^{\otimes
    n}\otimes\Dscr(\Ascr)_{\geq 0}\right)^\heart$$
    and hence by adjunction a natural left exact left adjoint functor $$\Dscr(\Ascr)_{\geq 0}\rightarrow\Dscr(\ZZ)_{\geq
    0}^{\otimes n}\otimes\Dscr(\Ascr)_{\geq 0}$$
    for all $n$. It is not hard to see that these functors assemble into the structure of a
    $\Dscr(\ZZ)_{\geq 0}$-comodule on $\Dscr(\Ascr)_{\geq 0}$.
    We will prove that the comodule structure is naturally right adjoint to a
    module structure in $\Groth_\infty$.

    Consider for simplicity for a
    moment the case of $\Dscr(R)_{\geq 0}$ where $R$ is some ring. Then, the functor
    $\Dscr(R)_{\geq 0}\rightarrow\Dscr(\ZZ)_{\geq 0}\otimes\Dscr(R)_{\geq
    0}\we\Dscr(\ZZ\otimes_\SS R)_{\geq 0}$ is the left exact functor given by
    restriction of scalars along the map $\ZZ\otimes_\SS R\rightarrow R$. 
    In particular, it admits a left adjoint itself $$\Dscr(\ZZ)_{\geq
    0}\otimes\Dscr(R)_{\geq 0}\rightarrow\Dscr(R)_{\geq 0}.$$ This left adjoint is typically not left
    exact. It is easy to see using the functoriality of adjoints that this makes $\Dscr(R)_{\geq 0}$ into a
    $\Dscr(\ZZ)_{\geq 0}$-module in $\Groth_\infty$ and hence by taking
    spectrum objects we obtain a canonical $\Dscr(\ZZ)$ action on $\Dscr(R)$.
    (Note that technically we should also discuss the left adjoints to the
    maps $\Dscr(R)_{\geq 0}\rightarrow\Dscr(\ZZ)_{\geq 0}^{\otimes
    n}\otimes\Dscr(R)_{\geq 0}$. The argument is the same as the $n=1$ case
    here and in the next paragraph,
    so we omit it.)

    Now, suppose that $\Cscr$ is a general separated $0$-complicial Grothendieck prestable
    $\infty$-category. The important thing is to check that
    $H\colon\Cscr\rightarrow\Dscr(\ZZ)_{\geq 0}\otimes\Cscr$ preserves all limits so that
    it admits a left adjoint. Choose a generator $X\in\Cscr^\heart$ and let
    $R=\Hom_\Cscr(X,X)$. By the $\infty$-categorical Gabriel--Popescu
    theorem~\cite[C.2.1.6]{sag}, we have that the natural fully faithful functor
    $V=\Map_\Cscr(X,-)\colon\Dscr(R)_{\geq 0}$ admits a left exact left adjoint
    $E\colon\Dscr(R)_{\geq 0}\rightarrow\Cscr$. We claim that the following
    diagram
    $$\xymatrix{\Dscr(R)_{\geq 0}\ar[r]^<<<<<{G}\ar[d]^E&\Dscr(\ZZ)_{\geq
    0}\otimes\Dscr(R)_{\geq 0}\we\Dscr(\ZZ\otimes_\SS R)_{\geq 0}\ar[d]^F\\
    \Cscr\ar[r]^H&\Dscr(\ZZ)_{\geq 0}\otimes\Cscr}$$
    is right adjointable. In other words, if we let $V$ be the fully faithful right adjoint to $E$
    and $U$ be right adjoint to $F$, then there is an equivalence of functors
    $G\circ V\we U\circ H$. Note that $U\we\Map_{\Dscr(\ZZ)_{\geq
    0}\otimes\Cscr_{\geq 0}}(\ZZ\otimes X,-)$ is fully faithful. Pick $Y\in\Cscr$.
    There are natural equivalences
    \begin{align*}
        UHY&\we UHEVY\we UFGVY\\
        &\we GVY,
    \end{align*}
    which is what we wanted to show.

    In particular, the adjointability of the diagram together with the
    conservativity of $U$ implies that $H$ preserves limits, as desired.
    It follows that $\Cscr$ is a canonically a $\Dscr(\ZZ)_{\geq 0}$-module in
    $\Groth_\infty$ and hence that $\Sp(\Cscr)$ is canonically a
    $\Dscr(\ZZ)$-module in $\mathrm{Pr}^{\mathrm{L}}_{\mathrm{st}}$, the $\infty$-category
    of stable presentable $\infty$-categories and left adjoint functors.

    The proof is the same in the anticomplete case, but where we use~\cite[C.5.8.12, C.5.8.13]{sag} to
    write a general anticomplete $0$-complicial Grothendieck prestable $\infty$-category as
    a left exact localization of a separated $0$-complicial Grothendieck prestable
    $\infty$-category.
\end{proof}

\section{(Counter)examples, questions, and conjectures}\label{sec:examples}

We discuss a wide range of ideas in this section.
Section~\ref{sub:completeness} discusses the question of when $\Dscr(\Ascr)$ is
left complete and of when $\widehat{\D}(\Ascr)$ admits a unique enhancement.
Section~\ref{sub:qcoh} is about what is not known for derived categories of
quasi-coherent sheaves. In Section~\ref{sub:sing}, we relate our work to
singularity categories. Section~\ref{sub:nstable} is about the conjectural
theory of stable $n$-categories.
In the spirit of all papers on triangulated categories and dg categories, Section~\ref{sub:conjecture} discusses a foolishly optimistic
conjecture. Finally, the brief Section~\ref{sub:categories} is about some
categorical questions which would make all of our proofs easier and strengthen
our results.

We include several updates due to the work~\cite{cns} of Canonaco, Stellari, and Neeman.

\subsection{Completeness and products}\label{sub:completeness}

\begin{question}
    Let $\Ascr$ be a Grothendieck abelian category.
    Is it true that every (possibly presentable) enhancement $\Cscr$ of $\widehat{\D}(\Ascr)$ is
    equivalent to $\wDscr(\Ascr)$?
\end{question}

\begin{remark}
    This has now been proved by Canonaco, Stellari, and Neeman
    in~\cite[Theorem~A]{cns}.
\end{remark}

Lurie effectively proved that when countable products are exact in $\Ascr$, then
$\D(\Ascr)\we\wD(\Ascr)$, so in that case a positive answer in the presentable
case is given by Theorem~\ref{mt:2} and in general by Theorem~\ref{thm:appendix}.

\begin{definition}
    Let $\Ascr$ be a Grothendieck abelian category.
    \begin{enumerate}
        \item[(a)] We say that $\Ascr$ is AB4* if products in $\Ascr$ are right
            exact.
        \item[(b)] We say that $\Ascr$ is AB4*($\omega$) if countable products
            in $\Ascr$ are right exact.
        \item[(c)] We say that $\Ascr$ is AB4*$n$ if the derived functors
            $\prod_I^i$ vanish for $i>n$ and all indexing sets $I$.
        \item[(d)] We say that $\Ascr$ is AB4*$n(\omega)$ if the derived
            product
            functors $\prod_I^i$ vanish for $i>n$ and all countable indexing
            sets $I$.
    \end{enumerate}
\end{definition}

Condition AB4* is satisfied for example by $\Mod_A$ where $A$ is any
associative ring. It is definitely not true in general, as examples below
illustrate.

\begin{lemma}
    If $\Ascr$ is a Grothendieck abelian category that satisfies {\em
    AB4*($\omega$)}, then $\Dscr(\Ascr)$ is left complete.
\end{lemma}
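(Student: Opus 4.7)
The plan is to reduce to Proposition~\ref{prop:sepcomplete}(2). Since $\Dscr(\Ascr)$ is presentable (by Example~(iii)), it admits all small products, and its standard $t$-structure is left separated. By Proposition~\ref{prop:sepcomplete}(2), to conclude left completeness it suffices to verify that the $t$-structure is compatible with countable products, i.e., that $\Dscr(\Ascr)_{\geq 0}$ is closed under countable products inside $\Dscr(\Ascr)$.

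Let $\{X_i\}_{i\in\NN}$ be a countable family of objects of $\Dscr(\Ascr)_{\geq 0}$. What must be shown is that $\pi_n\left(\prod_i X_i\right)=0$ for every $n<0$, where the product is computed in $\Dscr(\Ascr)$. The natural vehicle here is the (convergent) Grothendieck spectral sequence
$$E_2^{p,q}\;=\;\Bigl(R^p\!\prod_i\Bigr)\pi_q X_i\;\Longrightarrow\;\pi_{q-p}\prod_i X_i,$$
obtained either directly from the total derived functor of the product functor $\prod_i\colon\Ascr^{\NN}\to\Ascr$ (which is left exact, so has right derived functors) or, more concretely, by choosing K-injective resolutions $I_i^\bullet$ of each $X_i$ and observing that the termwise product $\prod_i I_i^\bullet$ is again K-injective (since products in the homotopy category of chain complexes are degreewise, and K-injectivity is preserved) and hence computes $\prod_i X_i$. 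Under the AB4*($\omega$) hypothesis, $R^p\prod_i=0$ for all $p>0$ on countable families, so the spectral sequence degenerates at $E_2$ and yields the natural isomorphism
$$\pi_n\prod_i X_i\;\cong\;\prod_i \pi_n X_i$$
for every $n\in\ZZ$. In particular, for $n<0$ each factor $\pi_n X_i$ vanishes, so $\prod_i X_i\in\Dscr(\Ascr)_{\geq 0}$.

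Having established that countable products preserve connectivity, Proposition~\ref{prop:sepcomplete}(2) applied to the left separated $t$-structure on $\Dscr(\Ascr)$ immediately gives left completeness. The main obstacle is the middle step: justifying the identification $\pi_n\prod_i X_i\cong\prod_i\pi_n X_i$ in the requisite generality. The spectral sequence approach is standard when $\Ascr$ has enough injectives, but for a general Grothendieck abelian category it is cleaner to run the argument through K-injective resolutions, invoking their existence in any Grothendieck abelian category together with the fact that the degreewise product of K-injectives remains K-injective; degreewise exactness of countable products (AB4*($\omega$)) then transports cohomology through the product termwise, which is all that is needed.
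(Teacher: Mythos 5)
Your proposal is correct and follows essentially the same route as the paper: reduce via Proposition~\ref{prop:sepcomplete}(2) to showing $\Dscr(\Ascr)_{\geq 0}$ is closed under countable products, compute the product termwise on fibrant (K-injective) representatives, and use AB4*($\omega$) to identify the homology of the product with the product of the homologies. The only difference is that you spell out the justification of that last identification (via K-injective resolutions or the degenerate derived-product spectral sequence), which the paper simply asserts.
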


\begin{proof}
    It suffices by Proposition~\ref{prop:sepcomplete} to check $\Dscr(\Ascr)_{\geq 0}$ is
    closed under countable products in $\Dscr(\Ascr)$. Let $\{X(i)\}$ be a countable collection of
    objects in $\Dscr(\Ascr)_{\geq 0}$, which we represent as $X(i)_\bullet$ for some fibrant
    complexes in $\Ascr$. The product is represented by $\prod_i X(i)_\bullet$. We have to
    prove that $\H_i(\prod_i X(i)_\bullet)=0$ for $i<0$. But, by AB4*($\omega$), the
    homology of a product is the product of the homologies, so this is clear.
\end{proof}

\begin{example}\label{ex:almost}
    Categories of almost modules are AB4*. If $A$ is an associative
    ring with a $2$-sided ideal $I$ such that $I^2=I$, then the almost category $\Mod_A^{a}$
    satisfies AB4* by work of Roos. It follows that $\Dscr(\Mod_A^{a})$ is left complete.
    Additionally, $\D(\Mod_A^a)$ admits a unique presentable $\infty$-categorical
    enhancement by Theorem~\ref{mt:2}.
\end{example}

\begin{proposition}
    In the situation of Example~\ref{ex:almost}, $\D(\Mod_A^a)$ admits a unique
    $\infty$-categorical enhancement.
\end{proposition}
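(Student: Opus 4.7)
The plan is to bootstrap from the presentable uniqueness of Example~\ref{ex:almost} by showing that any enhancement of $\D(\Mod_A^a)$ is automatically left complete, and hence equivalent to $\Dscr(\Mod_A^a)$.

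Let $\Cscr$ be an arbitrary stable $\infty$-categorical enhancement of $\D(\Mod_A^a)$, equipped with the $t$-structure lifted from its homotopy category via Lemma~\ref{lem:t}. By Proposition~\ref{prop:ba}, applicable because $\Mod_A^a$ is Grothendieck abelian and thus has enough injectives, there is a canonical equivalence $\Cscr^+ \we \Dscr^+(\Mod_A^a)$, and so $\Cscr_{\leq n} \we \Dscr(\Mod_A^a)_{\leq n}$ as $\infty$-categories for every integer $n$.

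The key preliminary observation is that $\Cscr$ admits countable products. Given a collection $\{Y_i\}$ in $\Cscr$, the product $\prod_i Y_i$ exists in $h\Cscr \we \D(\Mod_A^a)$ since $\Mod_A^a$ is Grothendieck abelian. Choose an object $X \in \Cscr$ representing this product together with lifts $p_i\colon X \to Y_i$ of the projections. Because translation is an auto-equivalence of $h\Cscr$, it preserves products, so $X[k] \iso \prod_i Y_i[k]$ in $h\Cscr$ for every $k \in \ZZ$. Hence the induced map of mapping spectra $\MapSp_\Cscr(W, X) \to \prod_i \MapSp_\Cscr(W, Y_i)$ is an isomorphism on every homotopy group and therefore an equivalence of spectra for every $W \in \Cscr$, identifying $X$ as the $\infty$-categorical product in $\Cscr$.

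With countable products in $\Cscr$ in hand, Lemma~\ref{lem:detection}(a) transfers left separatedness from $\D(\Mod_A^a)$, and Lemma~\ref{lem:detection}(b) transfers closure of $\Cscr_{\geq 0}$ under countable products, which holds in $\D(\Mod_A^a)$ by the AB4* hypothesis on $\Mod_A^a$. Proposition~\ref{prop:sepcomplete}(2) then yields that $\Cscr$ is left complete. Combined with the left completeness of $\Dscr(\Mod_A^a)$ established in the preceding lemma, we conclude
\[
\Cscr \we \lim_n \Cscr_{\leq n} \we \lim_n \Dscr(\Mod_A^a)_{\leq n} \we \Dscr(\Mod_A^a).
\]
The main technical point is the third paragraph's lift of the homotopy-category product to an $\infty$-categorical one; after that, the AB4* input drives the rest of the argument and supplies the crucial completeness properties on both sides.
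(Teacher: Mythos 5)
Your proof is correct and follows essentially the same route as the paper: transfer left separatedness and compatibility with countable products (via the AB4* property of almost modules) through the detection lemma, deduce left completeness from Proposition~\ref{prop:sepcomplete}, identify the coconnective parts via Proposition~\ref{prop:ba}, and conclude by taking the limit of the truncation towers. The only addition is your explicit verification that homotopy-category products lift to $\infty$-categorical products in $\Cscr$, a point the paper leaves implicit, and your argument for it is sound.
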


This case is interesting because typically $\Mod_A^a$ is typically not compactly generated and
in fact does not even admit any non-zero projective objects!
See~\cite[Theorem~4.1]{roos-inverse} and Lemma~\ref{lem:compacts} below.

\begin{proof}
    We can see from Lemma~\ref{lem:detection} that if $\Cscr$ is a model for $\D(\Mod_A^a)$,
    then the $t$-structure on $\Cscr$ is compatible with countable products. It is also
    left separated so it is in fact left complete. However, using Proposition~\ref{prop:ba},
    we find that $\Cscr_{\leq 0}\we\Dscr(\Mod_A^a)_{\leq 0}$. Completeness of $\Cscr$ and
    $\Dscr(\Mod_A^a)$ now implies that $\Cscr\we\Dscr(\Mod_A^a)$.
\end{proof}

\begin{remark}
    What we see more generally is that if $\Dscr(\Ascr)$, then Theorem~\ref{mt:2} can be strengthened to say
    that there is a unique $\infty$-categorical enhancement of $\D(\Ascr)$.
\end{remark}

\begin{lemma}\label{lem:compacts}
    Suppose that $R$ is a local ring and $I\subseteq R$ a proper flat ideal $I$ such that $I^2=I$.
    Then, the only compact object of $\Mod_R^a$ is the zero object. 
    Similarly, the only compact object of $\D(\Mod_R^a)$ is the zero object.
\end{lemma}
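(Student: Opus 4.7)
The plan is to derive both statements from an iterated Nakayama argument built on the presentation $R^a = I^a = \colim_\alpha I_\alpha^a$ in $\Mod_R^a$, where $I_\alpha$ ranges over the finitely generated subideals of $I$. (The first equality uses $(R/I)^a = 0$, which holds since $I(R/I)=0$, and the second uses that $(-)^a$ preserves filtered colimits.)

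For $\Mod_R^a$, suppose $X = M^a$ is compact. Tensoring the presentation with $X$ gives $X = \colim_\alpha (M \otimes_R I_\alpha)^a$, so compactness exhibits $X$ as a retract of some $(M \otimes_R I_\alpha)^a$, with the retract map being the multiplication $(M \otimes_R I_\alpha)^a \to M^a$, whose image is $(I_\alpha M)^a$. Writing $M$ as a filtered colimit of its finitely generated submodules and reapplying compactness lets us assume $M$ itself is finitely generated. The retract then forces $(I_\alpha M)^a = M^a$, i.e.\ $IM = I_\alpha M$, which is therefore finitely generated. Since $(IM)^a = M^a = X$ is still compact, applying the same argument with $IM$ in place of $M$ yields a finitely generated $I_\beta \subseteq I$ with $I \cdot IM = I_\beta \cdot IM$; using $I^2 = I$, this collapses to $IM = I_\beta \cdot IM$. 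Nakayama's lemma (with $IM$ finitely generated and $I_\beta \subseteq I \subseteq \mathfrak{m}$) forces $IM = 0$, so $X = 0$.

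For $\D(\Mod_R^a)$, the plan is to reduce to the abelian case via the $t$-structure. The standard $t$-structure on $\D(\Mod_R^a)$ is compatible with filtered colimits (since $\Mod_R^a$ is Grothendieck abelian) and separated, so the natural map $\colim_n \tau_{\geq -n} X \to X$ is an isomorphism on every $\pi_k$ and hence an equivalence; compactness of $X$ then exhibits it as a retract of some $\tau_{\geq -n} X$, so $X$ is bounded below. Shifting, assume $X \in \Dscr_{\geq 0}$. Since $\Dscr_{\geq 0} \subseteq \Dscr$ is closed under filtered colimits, $X$ is also compact in $\Dscr_{\geq 0}$. The truncation $\tau_{\leq 0} \colon \Dscr_{\geq 0} \to \Mod_R^a$ is left adjoint to the inclusion of the heart, and the inclusion preserves filtered colimits (by AB5 for $\Mod_R^a$), so $\tau_{\leq 0}$ preserves compact objects. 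Hence $\pi_0 X$ is compact in $\Mod_R^a$, so $\pi_0 X = 0$ by the abelian case. Consequently $X \in \Dscr_{\geq 1}$; shifting down and repeating shows $\pi_k X = 0$ for all $k \geq 0$, and separation then yields $X = 0$.

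The main subtlety is the derived case, where we must thread compactness through the $t$-structure in two steps: first using separation to conclude that compacts are bounded below, and then using the adjointness of $\tau_{\leq 0}$ to transport compactness from $\Dscr_{\geq 0}$ into the heart, so that the abelian Nakayama argument applies verbatim.
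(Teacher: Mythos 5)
Your proof is correct, but it takes a genuinely different route from the paper's. The paper transports the problem out of the almost category: since the localization $j^*\colon\Mod_R\rightarrow\Mod_R^a$ preserves filtered colimits, its left adjoint $j_!$ preserves compact objects, and $j_!M$ is an $R$-module with $Ij_!M=j_!M$; compacts of $\Mod_R$ are finitely presented, so Nakayama (with $I$ in the Jacobson radical) gives $j_!M=0$ and hence $M=0$, and the derived case is handled the same way, using flatness of $I$ and the fact that the bottom homotopy group of a perfect complex is finitely presented. You instead argue entirely inside $\Mod_R^a$ and $\Dscr(\Mod_R^a)$: in the abelian case you run compactness against the filtered presentation $R^a\simeq I^a=\colim_\alpha I_\alpha^a$ to force finite generation and then apply Nakayama, and in the derived case you reduce to the abelian one through the $t$-structure (compacts are bounded below because $X\simeq\colim_n\tau_{\geq -n}X$, and $\pi_0$ preserves compacts because the heart inclusion preserves filtered colimits), finishing by induction and separatedness. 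What each buys: the paper's argument is shorter and leans on standard identifications of compacts in $\Mod_R$ and $\D(R)$, but its derived half genuinely uses the flatness of $I$; yours never invokes $j_!$ and in fact uses only that $I$ is a proper idempotent ideal of a local ring, so flatness is not needed, at the cost of a longer argument. One small streamlining: once you know $M$ is finitely generated and $IM=I_\alpha M$ is finitely generated, Nakayama applies directly to $IM$ since $I\cdot IM=I^2M=IM$ and $I\subseteq\mathfrak{m}$; the second pass producing $I_\beta$ is unnecessary, as Nakayama does not require the ideal to be finitely generated.
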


\begin{proof}
    Since $j^*\colon\Mod_R\rightarrow\Mod_R^a$ preserves filtered colimits, the left
    adjoint $j_!$ preserves compact objects. But, by definition, $j_!M$ is an $R$-module
    such that $Ij_!M=j_!M$. Since $j_!M$ is compact, we see that it is finitely presented. But,
    $I$ is contained in the Jacobson radical of $R$, so $Ij_!M=j_!M$ implies that $j_!M=0$.
    The proof in the derived category case is the same, using that $I$ is flat
    and that the bottom homotopy group of a perfect complex of $R$-modules is finitely presented.
\end{proof}

The axiom AB4* is not satisfied in general. The original example is due to
Grothendieck~\cite{tohoku}.

\begin{example}
    Let $X$ be a topological space and let $\Shv(X)$ be the abelian category of sheaves of
    abelian groups on $X$. Then, $\Shv(X)$ is Grothendieck abelian, but it typically does
    not satisfy AB4* or even AB4*($\omega$). The reason is that products are computed on stalks, but the
    restriction functors do not generally preserve products. Write $\PShv(X)$ for the
    category $\Fun(\Op(X)^{\op},\Mod_\ZZ)$ of presheaves of abelian groups. Then, $\Shv(X)$
    is a left exact localization of $\PShv(X)$. In particular, the inclusion functor
    preserves arbitrary products. But, it is not right exact in general. Thus, consider a
    collection $\{0\rightarrow\Fscr_i\rightarrow\Gscr_i\rightarrow\Hscr_i\rightarrow
    0\}_{i\in I}$ of exact sequences of sheaves of abelian groups. We can compute the product sequence
    $$0\rightarrow\prod_i\Fscr_i\rightarrow\prod_i\Gscr_i\rightarrow\prod_i\Hscr_i,$$
    which is exact on the left since products are always left exact.
    Each $\prod_i\Fscr$ is the sheaf with values $(\prod_i\Fscr_i)(U)\iso\prod_i\Fscr_i(U)$,
    where the latter term is computed as the product in abelian groups. The question is
    whether or not the sequence above is exact on the right, or simply whether in this case
    $\prod_i\Gscr_i\rightarrow\prod_i\Hscr_i$ is surjective as a map of sheaves. Note
    however, that the maps $\Gscr_i(U)\rightarrow\Hscr_i(U)$ are typically not surjective
    for any given $U$. Let $X$ be a space and $x\in X$ a point with a strictly decreasing family of open neighborhoods
    $\cdots\subset U_2\subset U_1$ with intersection $\{x\}$. Write $j(k)$ for
    the inclusion of $U_k$ in $X$ and $i$ for the inclusion of $\{x\}$ in $X$.
    Consider the natural transformations $j(k)_!\ZZ_{U_k}\rightarrow i_*\ZZ_{x}$,
    where $\ZZ_{U_k}$ is the constant sheaf associated to $\ZZ$ on $U_k$ and $\ZZ_{x}$ is
    the constant sheaf $\ZZ$ on $\{x\}$.
    Each of these maps is surjective. Now, consider the map
    $\prod_k j(k)_!\ZZ_{U_k}\rightarrow\prod_k i_*\ZZ_x$. The right hand term is evidently
    non-zero. But, the product on the left is actually the zero sheaf if $\{x\}$ is not open!
\end{example}

Similarly, the Grothendieck abelian category of quasi-coherent sheaves on a scheme $X$ is typically not AB4*($\omega$). Indeed, Roos has
shown that if $U=\Spec R-\{\mathfrak{m}\}$ where $R$ is a noetherian local ring of Krull
dimension $d$ and $\mathfrak{m}$ is the maximal ideal, then products in $\QCoh(U)$ are
not exact if $d\geq 2$. A more precise statement can be made.

\begin{example}
    Roos showed in~\cite[Theorem~1.5]{roos-inverse} that
    if $U=\Spec R-\{\mathfrak{m}\}$ is the punctured spectrum of a local ring as above, then
    $\QCoh(U)$ is AB4*$(d-1)$ and this is the best possible, meaning that $\QCoh(U)$ is not
    AB4*$n$ for any $n<d-1$.
\end{example}

\begin{example}
    For a specific example, let $R=k[x,y]_{(0,0)}$ with $\mathfrak{m}=(x,y)$ and $d=2$. Let
    $X=\Spec R$ and $U=X-\{\mathfrak{m}\}$.
    Consider the countable product $\prod_\NN\Oscr_U$ in $\Dscr(\QCoh(U))$. Since $j_*$
    preserves products, we have that $$j^*\prod_\NN j_*\Oscr_U\we
    j^*j_*\prod_\NN\Oscr_U\we\prod_\NN\Oscr_U.$$ We can compute the homology groups of
    $\prod_\NN j_*\Oscr_U$ as $$\H_i\left(\prod_\NN j_*\Oscr_U\right)\iso\prod_\NN
    \R^{-i}j_*\Oscr_U\iso\begin{cases}
    \prod_\NN R&\text{if $i=0$,}\\
    \prod_\NN\H^{-i-1}_{\mathfrak{m}}(R)&\text{if $i\neq 0$.}
    \end{cases}$$
    The local cohomology group $\H^{n}_{\mathfrak{m}}(R)$ in this case is $K/R$ if
    $n=2$ and zero otherwise. In particular, we see that $$\H_{-1}\left(\prod_\NN
            j_*\Oscr_U\right)\iso\prod_\NN K/R.$$ Each element of $K/R$ is killed by some
    power of $\mathfrak{m}$. However, this is not true of $\prod_{\NN}K/R$.
    Thus, the localization
    $j^*\prod_\NN K/R$ is non-zero. Since $\Dscr(\QCoh(X))\rightarrow\Dscr(\QCoh(U))$ is
    $t$-exact we see that $\H_{-1}\left(\prod_\NN\Oscr_U\right)$ is non-zero in $\Dscr(\QCoh(U))$ and
    that the $t$-structure on $\Dscr(\QCoh(U))$ is not compatible with countable products.
    Similarly, $\QCoh(U)$ is AB4*$1(\omega)$, but not AB4*$0(\omega)$.
\end{example}

\begin{remark}
    Kanda has recently shown in~\cite{kanda} that for a noetherian scheme $X$ with an ample family of
    line bundles, $X$ satisfies AB4* if and only if $X$ is affine.
\end{remark}

We see however that any such punctured spectrum $X$ is quasi-compact and separated. In
particular, we have that $$\Dscr(\QCoh(X))\we\wDscr(\QCoh(X))\we\Dscr_{\qc}(X),$$
so that $\Dscr(\QCoh(X))$ is left complete. In particular, we see that separated plus left
complete does not imply AB4*.

\begin{question}\label{q:ab4}
    Let $X$ be a quasi-compact and quasi-separated scheme. Does $\QCoh(X)$ satisfy AB4*$n$ for some
    finite $n$?
\end{question}

\begin{proposition}\label{prop:complete}
    Suppose that $\Ascr$ is {\em AB4*$n(\omega)$} for some $n$. Then, $\Dscr(\Ascr)$ is left complete.
\end{proposition}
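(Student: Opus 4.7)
The plan is to show directly that the canonical map $X \to \lim_n \tau_{\leq n} X$ is an equivalence in $\Dscr(\Ascr)$ for every $X$. Since $\Dscr(\Ascr)$ is left separated by~\cite[1.3.5.21]{ha}, it suffices to show that the fiber $F \simeq \lim_n \tau_{\geq n+1} X$ of this map---obtained by taking the limit of the fiber sequences $\tau_{\geq n+1} X \to X \to \tau_{\leq n} X$---satisfies $F \in \Dscr(\Ascr)_{\geq k}$ for every $k \in \ZZ$, whence $F \simeq 0$.

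The key technical lemma I would prove first is that the countable product functor on $\Dscr(\Ascr)$ has amplitude at most $m$: if $Y_i \in \Dscr(\Ascr)_{\geq a}$ for all $i \in \NN$, then $\prod_i Y_i \in \Dscr(\Ascr)_{\geq a - m}$. To see this, observe that $\tau_{\leq b}$ is a right adjoint and hence commutes with countable products, giving $\tau_{\leq b} \prod_i Y_i \simeq \prod_i \tau_{\leq b} Y_i$. Each factor $\tau_{\leq b} Y_i$ sits in $\Dscr(\Ascr)_{[a, b]}$ and therefore admits a finite Postnikov filtration whose graded pieces are shifts $\H_j(Y_i)[j]$ of objects of $\Ascr$. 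The AB4*$m(\omega)$ hypothesis places each $\prod_i \H_j(Y_i)[j]$ in $\Dscr(\Ascr)_{[j-m, j]}$, and a short induction on the length of the filtration (using that products, being limits, respect the filtration) yields $\prod_i \tau_{\leq b} Y_i \in \Dscr(\Ascr)_{[a - m, b]}$. Letting $b$ grow shows $\H_j(\prod_i Y_i) = 0$ for $j < a - m$.

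To finish, fix $k$ and set $n_0 = k + m$. The subtower $\{n \geq n_0\}$ is cofinal in $\NN$, so $F \simeq \lim_{n \geq n_0} \tau_{\geq n+1} X$, and presenting this sequential limit as the fiber of $1 - s$ on a countable product gives the fiber sequence
$$F \longrightarrow \prod_{n \geq n_0} \tau_{\geq n+1} X \xrightarrow{1 - s} \prod_{n \geq n_0} \tau_{\geq n+1} X.$$
For $n \geq n_0$ we have $\tau_{\geq n+1} X \in \Dscr(\Ascr)_{\geq k + m + 1}$, so by the key lemma both products live in $\Dscr(\Ascr)_{\geq k + 1}$, and therefore $F \in \Dscr(\Ascr)_{\geq k}$. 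Since $k$ was arbitrary, left separatedness forces $F \simeq 0$.

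The main obstacle is the key lemma bounding the amplitude of countable products, because AB4*$m(\omega)$ is stated only for products of objects of the heart $\Ascr$, whereas we need amplitude control for products of arbitrary (possibly unbounded) objects of $\Dscr(\Ascr)$. The trick---exploiting that $\tau_{\leq b}$ is a right adjoint and commutes with products---reduces the unbounded case to the bounded case, where the finite Postnikov filtration makes the hypothesis directly applicable. Note that in contrast to the earlier lemma treating AB4*$(\omega)$, we cannot invoke Proposition~\ref{prop:sepcomplete}(2) directly, since for $m > 0$ countable products of connective objects need not be connective.
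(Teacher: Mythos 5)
Your key lemma on the amplitude of countable products is correct, and your Milnor-sequence argument does establish that every object $X$ satisfies $X\we\lim_n\tau_{\leq n}X$; this reproduces, with the product estimate made explicit, the first half of the paper's proof. The gap is that this is not yet left completeness in the sense of Definition~\ref{def:tomnibus}(c): left completeness asserts that the canonical functor $\Dscr(\Ascr)\rightarrow\lim_n\Dscr(\Ascr)_{\leq n}$ is an equivalence. Pointwise convergence of Postnikov towers only says that the unit of the adjunction between this functor and the limit functor $\{X(n)\}\mapsto\lim_n X(n)$ is an equivalence, i.e.\ that the comparison functor is fully faithful. Essential surjectivity is a separate assertion: for every compatible tower $\{X(n)\}$ with $X(n)\in\Dscr(\Ascr)_{\leq n}$ and $\tau_{\leq n-1}X(n)\we X(n-1)$, one must show that $\tau_{\leq n}\lim_m X(m)\we X(n)$ for all $n$, i.e.\ that every tower is the Postnikov tower of its limit. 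This does not follow formally from convergence of Postnikov towers (it would if the limit functor on towers were known to be conservative, but that is precisely the same issue), so as written your argument proves less than the proposition claims.

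Fortunately the missing half is exactly one more application of your key lemma, and it is what the second paragraph of the paper's proof supplies: given a tower as above and a fixed $r$, the fibers $\mathrm{fib}\bigl(X(m)\rightarrow X(r)\bigr)$ lie in $\Dscr(\Ascr)_{\geq r+1}$, so by your product estimate (and the $1-s$ fiber sequence) their sequential limit is $(r-n)$-connective; hence $\pi_i\lim_m X(m)\iso\pi_i X(r)$ for $i<r-n$, and letting $r$ grow identifies $\tau_{\leq k}\lim_m X(m)$ with $X(k)$ for every $k$. Adding this paragraph makes your proof complete, and then it coincides in substance with the paper's argument.
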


Akhil Mathew pointed out a gap in our original proof, which we have fixed.
Since then, Jack Hall pointed out that the new reasoning is quite similar to the
argument in the proof of~\cite[\href{https://stacks.math.columbia.edu/tag/07K7}{Tag
07K7}]{stacks}.

Note that~\cite[Remark~1.2]{neeman-left} implies that the abelian category $\Ascr=\QCoh(\B\Ga)$
is not AB4*$n(\omega)$ for any finite $n$. See also Example~\ref{ex:neeman}.

\begin{proof}
We begin by proving the following in $\D(\Ascr)$: if $\{X(i)\}$ is a countable
collection of objects of $\D(\Ascr)_{\geq 0}$, then $\prod_i X(i)$ is
$(-n)$-connective. If this is the case, then a countable cofiltered limit of
connective objects will be $(-n-1)$-connective.

To compute the product, we first find a fibrant replacement $F(i)_*$ for each
$X(i)$. Typically, $F(i)_*$ is typically neither bounded above or
below, but it is some chain complex of injective objects of A. The product
$\prod_i X(i)$ is `computed’ by the chain complex $\prod_i F(i)_*$ by using the
injective model category structure on $\D(\Ascr)^I$.

There is a tiny subtlety here: in general it is difficult to control the
fibrant objects of functor categories in the injective model category
structure. But, since $I$ is discrete, the fibrant objects of the injective
model category structure on $\D(\Ascr)^I$ are those which are pointwise fibrant in
$\D(\Ascr)$.

Now, let $\sigma_{\leq 0}$ denote the bad truncation at $0$, so that there is a
    natural map $\sigma_{\leq 0}F(i)_* \rightarrow F(i)_*$ for each i. Since
    $\sigma_{\leq 0}F(i)_*$ is a bounded above complex of injectives, it is
    fibrant. Moreover, the map $\sigma_{\leq 0}F(i)_* \rightarrow F(i)_*$ is a $0$-equivalence.

To prove that $\prod_i X(i)$ is $(-n)$-connective, it is now enough to prove
    that $\sigma_{\leq 0}(\prod_i F(i)_*)$ is $(-n)$-connective.

However, $\sigma_{\leq 0}(\prod_i F(i)_*)=\prod_i (\sigma_{\leq 0} F(i)_*)$. This is an
isomorphism of chain complexes of injective objects of $A$. By our hypothesis
on the $X(i)$, $\sigma_{\leq 0}F(i)_*$ is an injective resolution of its top
homology (which sits in degree $0$). It follows that the homology of $\prod_i
(\sigma_{\leq 0} F(i)_*)$ computes the right derived functors of the functor
``product over I'' of $\{\H_0(\sigma_{\leq 0}F(i)_*)\}$ and in particular, since we
assume AB4*$n(\omega)$, it follows that $\H_m(\prod_i (\sigma_{\leq 0} F(i)_*))=0$
for $m<-n$. But, this means that $\H_m(\sigma_{\leq 0}(\prod_i F(i)_*))=0$ for
$m<-n$, which is what we wanted to show.

    Now we show that Postnikov towers converge. Fix
    $X\in\Dscr(\Ascr)_{\geq 0}$ and consider for each $m\geq 0$ the fiber
    sequence $\tau_{\geq m+1}X\rightarrow X\rightarrow\tau_{\leq m}X$.
    Taking the limit over $m$ we get a fiber sequence $$\lim_m\tau_{\geq
    m+1}X\rightarrow X\rightarrow\lim\tau_{\leq m}X.$$ To show that the
    Postnikov tower converges, it is enough to prove that $$\lim_m\tau_{\geq
    m+1}X\we 0.$$ We can start this limit at any point we want and thus assume
    it is a limit of $r$-connective objects for $r$ any given integer. Thus,
    the limit is $(r-n)$-connective for any $r$ by the argument above, and
    hence the limit vanishes since $\D(A)$ is left separated. The same argument will show that
    every tower is a Postnikov tower. Consider a tower $\{X(m)\}$ where
    $X(m)\in\Dscr(\Ascr)_{[0,m]}$ and $X(m)\rightarrow X(m-1)$ induces an
    equivalence $\tau_{\leq m-1}X(m)\we X(m-1)$. Fix $r\in\NN$. Then, we have fiber
    sequences $\tau_{\geq r+1}X(m)\rightarrow X(m)\rightarrow X(r)$ and $$\lim_m\tau_{\geq r+1}X(m)\rightarrow \lim_m X(m)\rightarrow X(r)$$ is a
    fiber sequence. We see from the argument above that the leftmost term is
    $(r+1-n)$-connective. Hence, $\pi_i\lim_m X(m)\iso\pi_iX(r)$ for $i<r-n$.
    Thus, $\tau_{\leq r-n-1}\lim_m X(m)\we \tau_{\leq r-n-1}X(r)\we X(r-n-1)$. Since $r$ was again
    chosen to be arbitrary, we see that the Postnikov tower associated to
    $\lim_n\tau_{\leq n}X(m)$ is again the tower $\{X(m)\}$.
\end{proof}

Akhil Mathew asked the following question.

\begin{question}
    Suppose that $\Cscr_{\geq 0}$ is a Grothendieck prestable $\infty$-category
    which is $m$-complicial for some finite $m$ and such that $\Cscr_{\geq
    0}^\heart$ satisfies AB4*$n(\omega)$ for some $n$. Is $\Cscr_{\geq 0}$ complete?
    Proposition~\ref{prop:complete} says that the answer is `yes' for $m=0$.
\end{question}

\subsection{Quasi-coherent sheaves}\label{sub:qcoh}

Let $X$ be a quasi-compact and quasi-separated scheme. Then, $\Dscr_{\qc}(X)$ is
a compactly generated stable presentable $\infty$-category with an accessible,
left and right complete $t$-structure which is additionally compatible with
filtered colimits. The heart is $\QCoh(X)$, but in general the natural map
$\Dscr(\QCoh(X))\rightarrow\Dscr_{\qc}(X)$ is not an equivalence or even fully
faithful when applied to bounded objects. See~\cite[Expos\'e~II,
Appendice~I]{sga6} for a counterexample of Verdier.

So, one open problem is whether or not $\D_{\qc}(X)$ admits a unique
$\infty$-categorical enhancement. Because of compact generation, any such will
be presentable.

\begin{question}\label{q:omni} Let $X$ be a quasi-compact and quasi-separated scheme.
    \begin{enumerate}
        \item[(i)]   Does $\D(\QCoh(X))$ admit a unique $\infty$-categorical model?
        \item[(ii)]   Does $\wD(\QCoh(X))$ admit a unique $\infty$-categorical model?
        \item[(iii)]   Does $\wD(\QCoh(X))$ admit a unique presentable
            $\infty$-categorical model?
        \item[(iv)]   Is the natural map $\D(\QCoh(X))\rightarrow\wD(\QCoh(X))$ an
            equivalence?
        \item[(v)] Does $\D_\qc(X)$ admit a unique $\infty$-categorical model?
        \item[(vi)] Does $\Perf(X)$ admit a unique $\infty$-categorical model?
    \end{enumerate}
\end{question}

\begin{remark}
We asked in Question~\ref{q:ab4} if $\QCoh(X)$ satisfies AB4*$n(\omega)$ for some
$n$ when $X$ is quasi-compact and quasi-separated. If so,
$\D(\QCoh(X))\we\wD(\QCoh(X))$.
\end{remark}

The answer to these questions is ``yes'' if $X$ has affine diagonal, in which
case $\D(\QCoh(X))\we\wD(\QCoh(X))\we\D_\qc(X)$. When $X$ is
instead a stack, then there are known cases where $\D(\QCoh(X))$ differs from
$\wD(\QCoh(X))$. Indeed, it is noted in~\cite[Remark~C.4]{hall-neeman-rydh} that if $X$ is
quasi-compact with affine diagonal, then $\D^+(\QCoh(X))\rightarrow\D_\qc^+(X)$
is fully faithful and extends to an equivalence
$\wD(\QCoh(X))\we\D_\qc(X)$. This applies in particular to $\B\Ga$;
Neeman proved in~\cite{neeman-left} that $\Dscr(\QCoh(\B\Ga))$ is not left
complete. So, in this particular case,
$\D(\QCoh(\B\Ga))\rightarrow\wD(\QCoh(\B\Ga))$ is not an equivalence.

Several more cases of Question~\ref{q:omni} have now been settled in~\cite{cns}
by Canonaco, Neeman, and Stellari. Positive answers to (i), (ii), and (iii)
follow from~\cite[Theorem~A]{cns} together with the Meta-Theorem. Additionally,
Canonaco, Neeman, and Stellari prove that there are unique dg
enhancements of $\Perf(X)$ and $\D_\qc(X)$ (settling the dg analogue of (v) and
(vi)). As far as we can see the case of
unique $\infty$-categorical enhancements in (v) and (vi) remains open because
$\D_\qc(X)$ need not be $0$-complicial in general (see
Example~\ref{ex:not0complicial}) and so the Meta-Theorem
does not apply.

In the remainder of this section, we will explore one possible route
to prove that $\D_\qc(X)$ admits a unique $\infty$-categorical
enhancement.

Let $\Perfscr(X)_{\geq 0}=\Perfscr(X)\cap\Dscr_{\qc}(X)_{\geq 0}$.
Unless $X$ satisfies some kind of regularity hypotheses,
$\Perfscr(X)_{\geq 0}$ will not be part of a $t$-structure on $\Perfscr(X)$.
By~\cite[Lemma~2.7]{bhatt-tannaka}, $$\Ind(\Perfscr(X)_{\geq
0})\we\Dscr(X)_{\geq 0}.$$
Now, consider Questions~\ref{q:omni}(v) and (vi). One might be tempted to argue as we did
for Theorem~\ref{mt:3} to prove (vi) and then deduce (v) from this via the
following straightforward argument.

\begin{lemma}
    Let $X$ be a quasi-compact and quasi-separated scheme. If $\Perf(X)$ admits
    a unique $\infty$-categorical enhancement, so does $\D_\qc(X)$.
\end{lemma}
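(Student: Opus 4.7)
The plan is to bootstrap uniqueness of $\Perf(X)$'s enhancement up to $\D_\qc(X)$ by passing through compact objects, using that for a quasi-compact, quasi-separated scheme we have $\Dscr_\qc(X)\simeq\Ind(\Perfscr(X))$.

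First, suppose $\Cscr$ is a stable $\infty$-categorical enhancement of $\D_\qc(X)$. By Bondal--Van den Bergh, $\D_\qc(X)$ is compactly generated with $\D_\qc(X)^\omega=\Perf(X)$, so Proposition~\ref{prop:presentable} implies that $\Cscr$ is presentable. Next, I would invoke~\cite[1.4.4.1-1.4.4.2]{ha}, which identifies the compact objects of a presentable stable $\infty$-category with the compact objects of its homotopy category. This gives a stable full subcategory $\Cscr^\omega\subseteq\Cscr$ whose homotopy category is canonically identified with $h\Cscr^\omega\simeq\D_\qc(X)^\omega=\Perf(X)$. Thus $\Cscr^\omega$ is an $\infty$-categorical enhancement of $\Perf(X)$.

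Now I apply the hypothesis: $\Perf(X)$ has a unique $\infty$-categorical enhancement, so $\Cscr^\omega\simeq\Perfscr(X)$ as $\infty$-categories. Since $\Cscr$ is presentable with compact generators $\Cscr^\omega$, the standard identification $\Cscr\simeq\Ind(\Cscr^\omega)$ (see~\cite[5.5.7.8]{htt}) yields
$$\Cscr\simeq\Ind(\Cscr^\omega)\simeq\Ind(\Perfscr(X))\simeq\Dscr_\qc(X),$$
where the last equivalence is the compact generation statement at the $\infty$-categorical level. This completes the argument.

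There is no serious obstacle here; the only point requiring care is the passage between compact objects in $\Cscr$ and in $h\Cscr$, which is handled by Lurie's results cited above. Note that we do not need to track the compatibility of these equivalences with the originally chosen triangulated equivalence $h\Cscr\simeq\D_\qc(X)$, since the conclusion only asserts equivalence of $\infty$-categories; the hypothesis on $\Perf(X)$ is used in exactly this form.
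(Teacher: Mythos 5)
Your proposal is correct and follows essentially the same route as the paper: pass to compact objects using compact generation of $\D_\qc(X)$ and Proposition~\ref{prop:presentable}, identify $\Cscr^\omega$ as an enhancement of $\Perf(X)$, apply the uniqueness hypothesis to get $\Cscr^\omega\we\Perfscr(X)$, and conclude via $\Cscr\we\Ind(\Cscr^\omega)\we\Ind(\Perfscr(X))\we\Dscr_\qc(X)$. The extra care you take in matching compact objects of $\Cscr$ with those of $h\Cscr$ is exactly what the paper's citation of Lurie handles, so there is nothing to add.
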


\begin{proof}
    Suppose that $\Cscr$ is an $\infty$-categorical model for $\D_\qc(X)$.
    Then, $\Cscr$ is presentable, compactly generated, and $\Cscr^\omega$ is an
    $\infty$-categorical model for $\Perf(X)$. Hence, by hypothesis,
    $\Cscr^\omega\we\Perfscr(X)$, whence
    $\Cscr\we\Ind(\Cscr^\omega)\we\Ind(\Perfscr(X))\we\Dscr_\qc(X)$, as
    desired.
\end{proof}

Unfortunately, proving that $\Perf(X)$ admits a unique $\infty$-categorical model
is out of our reach at the moment since we do not know if $\Perfscr(X)_{\geq 0}$ is $0$-complicial
in general, so we cannot appeal to Theorem~\ref{mt:3}. Note that there are a
priori more
objects of $\Perfscr(X)_{\geq 0}^\heart$ than of $\Perfscr(X)_{\geq
0}\cap\Dscr_\qc(X)^\heart$. In particular, even if $X$ does not have enough
perfect quasi-coherent sheaves, it might still be $0$-complicial. We hope to
return to this and the next question and conjecture in future work.

\begin{question}
    For which quasi-compact and quasi-separated schemes $X$ is
    $\Perfscr(X)_{\geq 0}$ $0$-complicial?
\end{question}

\begin{conjecture}
    Let $X$ be quasi-compact and quasi-separated. Then, $\Perfscr(X)_{\geq 0}$
    is $n$-complicial for some $n$.
\end{conjecture}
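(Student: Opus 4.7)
The plan is to proceed by induction on the complexity of an affine cover of $X$. The base case is $X = \Spec R$ affine, where $\Perfscr(X)_{\geq 0}$ is in fact $0$-complicial: every connective perfect $R$-complex $P$ admits a surjection on $\pi_0$ from a finitely generated free $R$-module placed in degree zero. For the inductive step in the semi-separated case, write $X = U \cup V$ with $V$ affine and $U$ semi-separated, quasi-compact, covered by strictly fewer affines. Then $W := U \cap V$ is again semi-separated with strictly fewer affines, so the inductive hypothesis applies to both $U$ (giving a bound $n_U$) and $W$ (giving $n_W$). Zariski descent yields a cartesian square
$$\xymatrix{\Perfscr(X)\ar[r]\ar[d] & \Perfscr(V)\ar[d] \\ \Perfscr(U)\ar[r] & \Perfscr(W).}$$

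Given $P \in \Perfscr(X)_{\geq 0}$, the inductive hypothesis supplies $Y_U \to P|_U$ with $Y_U \in \Perfscr(U)_{[0,n_U]}$ and $Y_V \to P|_V$ with $Y_V \in \Perfscr(V)^\heart$, each inducing a surjection on $\pi_0$. The strategy is to assemble $Y_U$ and $Y_V$, together with a compatibility on $W$, into a perfect complex $Y$ on $X$ with $Y \to P$ still surjective on $\pi_0$ and with amplitude bounded by an integer $n = n(X)$ determined by $n_U$, $n_W$, and a cohomological correction on $W$. This is plausible because $Y_U|_W$ and $Y_V|_W$ both surject onto $P|_W$ on $\pi_0$, so after enlarging both slightly (e.g.\ by pulling back $Y_V$ along $Y_U|_W \oplus Y_V|_W \to P|_W$, or by applying $0$-compliciality to the fiber product) one can force a common refinement living in bounded amplitude.

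The principal obstacle is controlling the amplitude of the glued complex $Y$. Lifting a $\pi_0$-level agreement between $Y_U|_W$ and $Y_V|_W$ to a genuine equivalence of restrictions (as required to produce an object in the fiber product $\Perfscr(X)$) means climbing a Postnikov tower whose obstructions live in higher Ext-groups on $W$, so the amplitude of $Y$ picks up a defect intrinsic to $W$ on top of $\max(n_U, n_V)$. In the noetherian finite Krull-dimensional setting this defect is controlled by Rouquier's dimension of $\Dscr_\qc(W)$, and the recursion closes to give a finite $n(X)$. For general qcqs $X$ one must replace Rouquier dimension by an analogous finite generation statement — most plausibly extracted from the Bondal-Van den Bergh and Hall-Rydh theorems producing compact generators of $\Dscr_\qc(W)$ together with some control on the number of cones, shifts, and retracts needed to build a given perfect complex out of the generator. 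Establishing such a bound uniformly over the induction, and removing the semi-separated hypothesis (where $W$ may fail to be affine and the naive induction on the cover size breaks down), is where the main difficulty lies and is what currently prevents a complete proof.
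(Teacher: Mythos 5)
This statement is one of the paper's open conjectures: no proof is given, and the text only records a one--sentence suggestion, namely to use \cite[Proposition~B.11]{thomason-trobaugh}, which provides a single integer $n$ with $\H^i(X,\Fscr)=0$ for $i>n$ and all quasi-coherent $\Fscr$ on a quasi-compact quasi-separated $X$, as a uniform global bound. Your proposal does not close this gap either, and you say so yourself; so what you have is a strategy with a genuine hole rather than a proof. The hole is concentrated in the gluing step. Zariski descent does give the cartesian square you write, but to produce an object of $\Perfscr(X)$ from it you need an actual equivalence $Y_U|_W \we Y_V|_W$ in $\Perfscr(W)$ together with a homotopy making the two maps to $P|_W$ agree; knowing only that $Y_U|_W$ and $Y_V|_W$ each surject onto $\pi_0(P|_W)$ gives no such datum, and ``enlarging both slightly'' by pulling back or applying $0$-compliciality to a fiber product produces at best an object over $W$, which must then be extended across the open immersion $W\hookrightarrow U$ (respectively $W \hookrightarrow V$). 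Extension of perfect complexes along open immersions is exactly the delicate point of Thomason--Trobaugh: it works only up to adding summands and cones, and each such correction can increase the amplitude, which is the quantity you are trying to bound. Moreover the proposed control via Rouquier dimension is not available for a general qcqs $W$ (and even in the noetherian case it is not obviously uniform over the recursion), and the reduction fails outright once $X$ is not semi-separated, since then $W=U\cap V$ need not be affine or covered by fewer affines in any useful sense. Note also a definitional subtlety your base-case-and-glue scheme must respect: ``$n$-truncated'' here means $n$-truncated internally to the prestable $\infty$-category $\Perfscr(X)_{\geq 0}$, whose heart can be strictly larger than $\Perfscr(X)_{\geq 0}\cap\Dscr_\qc(X)^\heart$, so bounding the Tor-amplitude of a glued complex is not literally the same as verifying $n$-compliciality. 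A more promising route, and the one the paper gestures at, is to exploit the uniform cohomological-dimension bound of Thomason--Trobaugh directly (one global $n$ for all of $X$), rather than trying to assemble local bounds through an induction on an affine cover whose recursion you cannot currently close.
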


The idea would be to use~\cite[Proposition~B.11]{thomason-trobaugh}, which says
that there exists an integer $n$ such that $\H^i(X,\Fscr)=0$ for $i>n$ and all quasi-coherent sheaves $\Fscr$ on $X$.
We prove in the following remark that if $X$ is quasi-compact and
quasi-separated, then $\Dscr_\qc(X)_{\geq 0}$ is typically not $0$-complicial.

\begin{remark}
    Let $\Groth_\infty^{\lex,\sep}$ denote the $\infty$-category of separated
    Grothendieck prestable $\infty$-categories and left exact left adjoint
    functors.
    We also have $\Groth_0^{\lex}$,
    the category of Grothendieck abelian categories and left exact colimit
    preserving functors between them. Finally, we have
    $\Groth_\infty^{\lex,\comp}$, the $\infty$-category of complete
    Grothendieck prestable $\infty$-categories and left exact left adjoint
    functors. There is a tripod of fully faithful left adjoint functors
    $$\xymatrix{&\Groth_0^\lex\ar[dr]^{\wDscr(-)_{\geq 0}}\ar[dl]_{\Dscr(-)_{\geq 0}}&\\
    \Groth_\infty^{\lex,\sep}&&\Groth_\infty^{\lex,\comp}.}$$
    The essential image of $\Groth_0^\lex$ in $\Groth_\infty^{\lex,\sep}$ is
    the full subcategory of $0$-complicial separated Grothendieck prestable
    $\infty$-categories. The essential image of $\Groth_0^\lex$ in
    $\Groth_\infty^{\lex,\comp}$ is the full subcategory of weakly
    $0$-complicial complete Grothendieck prestable $\infty$-categories.
    Finally, the right adjoints are given by taking hearts.
    For details, see~\cite[C.5.4.5, C.5.9.3]{sag}.

    The failure of the displayed left adjoint functors to preserve limits is
    behind the proliferation of derived categories attached to a single scheme
    $X$. Starting with $\QCoh(X)$, we can go to the left or right to obtain
    $\Dscr(\QCoh(X))_{\geq 0}$ and $\wDscr(\QCoh(X))_{\geq 0}$. We know that
    $\Dscr_\qc(X)_{\geq 0}$ is complete and separated. If it is weakly
    $0$-complicial, then this implies that it is equivalent to
    $\wDscr(\QCoh(X))_{\geq 0}$. If it is $0$-complicial, then it is also
    weakly $0$-complicial and it is equivalent to
    $\Dscr(\QCoh(X))_{\geq 0}$ and to $\wDscr(\QCoh(X))_{\geq 0}$. Thus, we see that
    $\Dscr_\qc(X)_{\geq 0}$ is $0$-complicial if and only if $$\Dscr(\QCoh(X))_{\geq
    0}\we\wDscr(\QCoh(X))_{\geq 0}\we\Dscr_\qc(X)_{\geq 0}.$$ The next example,
    of Verdier, shows that this does not always happen.
\end{remark}

\begin{example}\label{ex:not0complicial}
    Let $Z$ be the Verdier example~\cite[Expos\'e~II, Appendix I]{sga6} of a quasi-compact quasi-separated scheme
    obtained by gluing two copies of a specific affine scheme $X=\Spec R$ together
    along a specific quasi-compact open. Then, $\Dscr(\QCoh(Z))_{\geq 0}$ is not
    equivalent to $\Dscr_{\qc}(Z)_{\geq 0}$. So, we see that
    $\Dscr_{\qc}(Z)_{\geq 0}$ is not $0$-complicial.
\end{example}

%
%

\subsection{The singularity category}\label{sub:sing}

Here we discuss the connection between the unseparated and separated derived
categories and singularity categories.

Consider a general Grothendieck abelian category. There is a localization
sequence
$$\Kscr(\mathrm{AcInj}_\Ascr)\xrightarrow{I}\cDscr(\Ascr)\xrightarrow{Q}\Dscr(\Ascr),$$
where $\Kscr(\mathrm{AcInj}_\Ascr)$ is obtained as the dg nerve of the full dg
subcategory of $\mathrm{Inj}_\Ascr$ on those unbounded complexes of injectives
which are additionally acyclic (quasi-isomorphic to zero). In particular, $I$
and $Q$ admit {\em right} adjoints $I_\rho$ and $Q_\rho$, respectively.
See~\cite[Proposition~3.6]{krause-noetherian}; this also follows from the
localization theory of~\cite[Section~C.5.2]{sag}.

When $\Ascr$ is locally noetherian and $\Dscr(\Ascr)$ is compactly generated,
Krause shows in~\cite[Theorem~1.1]{krause-noetherian} that
$I$ and $Q$ admit additional {\em left}
adjoints $I_\lambda$ and $Q_\lambda$, forming a r\'ecollement
$$\Kscr(\mathrm{AcInj}_\Ascr)\lrlarrows\cDscr(\Ascr)\lrlarrows\Dscr(\Ascr).$$
In particular, since $Q$ preserves colimits, we see that $Q_\lambda$ preserves
compact objects, that $\Kscr(\mathrm{AcInj}_\Ascr)$ is compactly generated, and
that there is an exact sequence
$$\Dscr(\Ascr)^\omega\rightarrow\cDscr(\Ascr)^\omega\rightarrow\Kscr(\mathrm{AcInj}_\Ascr)^\omega$$
of small idempotent complete stable $\infty$-categories. In this setting,
$\cDscr(\Ascr)^\omega\we\Dscr^b(\Ascr^\omega)$.

\begin{example}
    If $X$ is a noetherian scheme with affine diagonal, then this gives the
    familiar exact sequence
    $$\Perfscr(X)\rightarrow\Dscr^b(\Coh(X))\rightarrow\Dscr_\sing(X),$$
    where $\Dscr_\sing(X)$ denotes the natural $\infty$-categorical enhancement
    of the {\bf singularity category} of $X$.
\end{example}

We have seen in this paper that $\Perfscr(X)$ and $\Dscr^b(X)$ both admit
unique $\infty$-categorical enhancements when $X$ is noetherian with affine
diagonal. It is natural to ask about $\Dscr_\sing(X)$.

\begin{example}\label{ex:sds}
    The work of Schlichting~\cite{schlichting} and
    Dugger--Shipley~\cite{dugger-shipley-curious} (in the $p>3$ case) and
    Muro--Raptis~\cite{muro-raptis} (in the $p=2,3$ case)
    shows that $\D_\sing(\ZZ/p^2)\we\D_\sing(\FF_p[\epsilon]/(\epsilon^2))$
    while $\Dscr_\sing(\ZZ/p^2)$ is not equivalent to
    $\Dscr_\sing(\FF_p[\epsilon]/(\epsilon^2))$. Thus, we see that even in the
    best possible case, where $\Perfscr(X)$ and $\Dscr^b(X)$ admit unique
    $\infty$-categorical enhancements, the singularity category can admit
    non-unique enhancements.
\end{example}

\begin{remark}
    The Schlichting and Dugger--Shipley work also implies that the large
    triangulated category $\K(\mathrm{AcInj}_\Ascr)$ admits multiple
    non-equivalent
    presentable $\infty$-categorical enhancements, when $\Ascr=\Mod_{\ZZ/p^2}$.
    Indeed, it is equivalent to the homotopy category of
    $\Kscr(\mathrm{AcInj}_\Bscr)$, where
    $\Bscr=\Mod_{\FF_p[\epsilon]/(\epsilon)^2}$, but
    $\Kscr(\mathrm{AcInj}_\Bscr)$ is not equivalent to
    $\Kscr(\mathrm{AcInj}_\Ascr)$.
\end{remark}

\begin{remark}
    Passing to the singularity category (either its big or small version)
    destroys the presence of $t$-structures, which is why our methods (or those
    of~\cite{lunts-orlov,cs-uniqueness}) do not apply.
    As an example, let $R=\FF_p[C_p]$, the group ring of the cyclic group of
    order $p$ over $\FF_p$. The singularity category $\Dscr_\sing(R)$ is
    generated by the image of the trivial $C_p$-module $\FF_p$. The
    endomorphism ring of $\FF_p$ in $\Dscr_\sing(R)$ computes the Tate-cohomology $\widehat{\H}^*(C_p,\FF_p)$,
    which is $2$-periodic. In particular, there cannot be a left and right
    separated $t$-structure on $\D_\sing(R)$.
\end{remark}

\subsection{Stable $n$-categories}\label{sub:nstable}

In this section we attempt to outline a story which will eventually clarify the
power of triangulated categories at determining the underlying stable
$\infty$-category, despite losing a great deal of information.
Most of this section is speculative. But, Raptis has recently made some
progress in the directions outlined here (and much more) in~\cite{raptis}.

If $n\geq 1$, an $n$-category for us is an $\infty$-category $\Cscr$ such that
$\Map_\Cscr(X,Y)$ is $(n-1)$-truncated for all objects $X,Y\in\Cscr$. Recall
that this means that $\pi_i\Map_\Cscr(X,Y)=0$ for all $i\geq n$ (and every
choice of basepoint). We let
$\Cat_{n-1}\subseteq\Cat_\infty$ be the full subcategory on
the $n$-categories. In general, $\Cat_{n-1}$ is
itself a large $n$-category. In particular, $\Cat_0$ is
equivalent to the category of small categories and functors between them. By
Gepner--Haugseng~\cite{gepner-haugseng}, we
can also view $\Cat_{n-1}$ as an $(n,2)$-category as $\Cat_{n-1}$ is enriched
over itself: given $n$-categories $\Cscr$ and $\Dscr$, the functor
$\infty$-category $\Fun(\Cscr,\Dscr)$ is an $n$-category by~\cite[2.3.4.8]{htt}.

\begin{remark}
    The indexing comes from higher topos theory. Given an $\infty$-topos
    $\Cscr$, the full subcategory $\Cscr_{\leq 0}$ of $0$-truncated objects is
    a topos. More generally, the full subcategory $\Cscr_{\leq n-1}$ is an
    $n$-topos.
\end{remark}

The inclusion
$\Cat_{n-1}\subseteq\Cat_\infty$ admits a left adjoint
$h_{n-1}$. Write $h_{n-1}\Cscr$ for the {\bf $n$-homotopy category} of an
$\infty$-category $\Cscr$. The $n$-category $h_{n-1}\Cscr$ has the same
objects as $\Cscr$, but $\Map_{h_{n-1}\Cscr}(X,Y)\we\tau_{\leq
n-1}\Map_\Cscr(X,Y)$. For details, see~\cite[Section~2.3.4]{htt}.

\begin{conjecture}\label{conj:nstable}
    For $1\leq n\leq \infty$, there exists a good theory of {\bf stable
    $n$-categories} and exact functors
    between them. This theory should fit into the following picture.
    \begin{enumerate}
        \item[{\rm (i)}] Stable $n$-categories and exact functors form an
            $(n,2)$-category $\Cat_{n-1}^\ex$ which is equipped with a
            forgetful functor
            $u_{n-1}\colon\Cat_{n-1}^\ex\rightarrow\Cat_{n-1}$.
            In particular, given stable $n$-categories $\Cscr$ and $\Dscr$,
            there should be an $n$-category $\Fun^\ex(\Cscr,\Dscr)$ of exact functors.
        \item[{\rm (ii)}] For $n\geq k$, there is a $k$-homotopy category
            functor $h_{k-1}\colon\Cat_{n-1}^\ex\rightarrow\Cat_{k-1}^\ex$ which fits into a
            commutative square
            $$\xymatrix{
            \Cat_{n-1}^\ex\ar[r]^{h_{k-1}}\ar[d]^{u_{n-1}}&\Cat_{k-1}^\ex\ar[d]^{u_{k-1}}\\
            \Cat_{n-1}\ar[r]^{h_{k-1}}&\Cat_{k-1}}$$
            of $n$-categories.
        \item[{\rm (iii)}] The $(\infty,2)$-category $\Cat_\infty^\ex$ is
            equivalent to the usual $\infty$-category of
            stable $\infty$-categories, exact functors, and natural
            transformations.
        \item[{\rm (iv)}] The $(1,2)$-category $\Cat_0^\ex$ is equivalent to
            the category of triangulated categories, exact functors, and
            natural transformations.
    \end{enumerate}
\end{conjecture}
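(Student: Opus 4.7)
\textbf{Proof proposal for Conjecture~\ref{conj:nstable}.} The plan is to proceed by axiomatization, producing an intrinsic definition of stable $n$-category that interpolates between the Verdier axioms ($n=1$) and the pushout-equals-pullback condition ($n=\infty$). A stable $n$-category should be a pointed $n$-category $\Cscr$ equipped with an autoequivalence $\Sigma\colon\Cscr\we\Cscr$ and a collection of distinguished ``$n$-triangles'' --- sequences $X\to Y\to Z\to \Sigma X$ together with $(n-1)$-categorical fillers encoding the coherence of fiber sequences up to level $n-1$ --- satisfying axioms $\mathrm{TR}_n1$--$\mathrm{TR}_n4$ that specialize to the Verdier axioms for $n=1$ and become automatic for $n=\infty$. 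The $(n,2)$-categorical enrichment in (i) then comes from the self-enrichment of $\Cat_{n-1}$, with the subcategory $\Cat_{n-1}^\ex$ cut out by the condition that $1$-morphisms preserve distinguished $n$-triangles and that higher morphisms are arbitrary.

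Given the definition, the plan is to verify (iv) first: when $n=1$, an $(n-1)$-category is just a set, so the ``higher coherence fillers'' collapse and the data reduce to a collection of diagrams $X\to Y\to Z\to \Sigma X$ in an ordinary category. One arranges $\mathrm{TR}_1 1$--$\mathrm{TR}_1 4$ so that these reproduce exactly the Verdier axioms, which is bookkeeping once the formulation is correct. Part (iii) for $n=\infty$ should follow from Lurie's characterization of stability: if $\Sigma$ is fully faithful and every map has a fiber in a pointed $\infty$-category with finite colimits, then the structure of distinguished fiber sequences is automatic and recovered from the cofiber functor (see~\cite[1.1.2.14]{ha} and its neighbors). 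The functors $h_{k-1}\colon \Cat_{n-1}^\ex\to\Cat_{k-1}^\ex$ in (ii) would be constructed by applying $h_{k-1}$ to the underlying $n$-category and truncating the $(n-1)$-categorical fillers to $(k-1)$-categorical fillers; the compatibility square with $u_{k-1}$ would then be tautological, and the axioms $\mathrm{TR}_k i$ would follow from their level-$n$ analogues by applying $\tau_{\leq k-1}$ to each space of fillers.

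The main obstacle is identifying the correct notion of coherence data at intermediate $n$. The octahedral axiom is the level-$1$ shadow of a homotopy coherent $3$-simplex of cofiber sequences, and higher analogues would have to record what amounts to a partial (level-$(n-1)$) stable derivator or Toda bracket structure; concretely the axioms should encode composition of mapping spaces modulo $\pi_{n-1}$, so the complexity of $\mathrm{TR}_n 4$ grows sharply with $n$. The substantive content of the conjecture is that this tower of axioms is consistent and that the resulting intrinsic $(n,2)$-category $\Cat_{n-1}^\ex$ is precisely the essential image of $h_{n-1}\colon \Cat_\infty^\ex\to\Cat_{n-1}$ equipped with its inherited triangulated data. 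An extrinsic shortcut --- defining $\Cat_{n-1}^\ex$ as that essential image --- makes (iii) and (iv) tautologies but abdicates the central point; the hardest and most interesting step is therefore to write down $\mathrm{TR}_n 4$ intrinsically and to prove that every stable $n$-category in the axiomatic sense lifts (non-uniquely in general) to a stable $\infty$-category, with an accompanying analogue of Theorem~\ref{mt:2} guaranteeing uniqueness of the lift under complicial hypotheses on the heart for $n\geq 2$.
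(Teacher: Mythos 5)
The statement you are addressing is not a theorem of the paper but an explicitly open conjecture: the author writes that no attempt is made to prove it, and indeed the surrounding section identifies the key missing ingredient as precisely the problem of defining, intrinsically to $n$-categories, what a stable $n$-category should be via a list of axioms analogous to the Verdier axioms. Your proposal is therefore not a proof but a research outline, and its load-bearing step --- writing down the intermediate coherence axioms $\mathrm{TR}_n1$--$\mathrm{TR}_n4$ (in particular the level-$(n-1)$ octahedral data), proving their mutual consistency, and establishing the comparison with $k$-homotopy categories --- is exactly the open content of the conjecture and is left untouched. The parts you do carry out (collapsing to the Verdier axioms at $n=1$, and invoking Lurie's characterization of stability at $n=\infty$) are the parts that were never in doubt; the construction of $h_{k-1}\colon\Cat_{n-1}^\ex\rightarrow\Cat_{k-1}^\ex$ by ``truncating the fillers'' presupposes the very definition that is missing, so (i) and (ii) for intermediate $n$ and $k$ remain unestablished.

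There is also a concrete internal tension in your closing paragraph. You propose that the intrinsic $(n,2)$-category $\Cat_{n-1}^\ex$ should be ``precisely the essential image of $h_{n-1}$'' on stable $\infty$-categories, and that every axiomatic stable $n$-category should lift to a stable $\infty$-category. At $n=1$ this is false and contradicts item (iv) of the conjecture: there exist triangulated categories admitting no stable $\infty$-categorical (indeed no topological) enhancement, such as the example of Muro, Schwede and Strickland, so $h_0$ is not essentially surjective onto triangulated categories, and the paper's own Section on enhancements (and the Rizzardo--van den Bergh examples cited there) makes clear that enhancement existence and uniqueness is a subtle question rather than a formal consequence of the axiomatics. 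So the correct shape of the program must allow stable $n$-categories that are not homotopy $n$-categories of stable $\infty$-categories (at least for small $n$), and the ``uniqueness of the lift under complicial hypotheses'' you append at the end is itself a further conjecture of the paper (its $n$-complicial uniqueness conjecture), not something your outline supplies. In short: no gap can be charged against the paper, since it proves nothing here, but your proposal should not be mistaken for progress on the conjecture beyond a plausible restatement of its difficulties.
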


\begin{remark}
    In particular, if $\Cscr$ is a stable $\infty$-category, then
    $h_{n-1}\Cscr$ is a stable $n$-category. As a special case, $h_0\Cscr$ is
    the usual triangulated homotopy category of $\Cscr$.
\end{remark}

\begin{remark}
    Given a stable $\infty$-category $\Cscr$, the suspension functor
    $\Sigma\colon\Cscr\rightarrow\Cscr$ induces an automorphism
    $\Sigma\colon h_{n-1}\Cscr\rightarrow h_{n-1}\Cscr$ of each associated
    stable $n$-category. Thus, a stable $n$-category should in particular be an
    $n$-category equipped with a fixed automorphism and exact functors should
    preserve these.
\end{remark}

\begin{remark}
    Fix $k\geq 1$ an integer and $p$ a prime.
    Motivated on the work of
    Barthel--Schlank--Stapleton~\cite{barthel-schlank-stapleton} on the asymptotic
    algebraicity of chromatic homotopy theory and of Patchkoria~\cite{patchkoria} on exotic equivalences, Piotr
    Pstr{\k{a}}gowski~\cite{pstragowski} has
    recently proved the remarkable theorem that if $E$ is a $p$-local Landweber
    exact homology theory of height $n$ such that $p>n^2+n+1+\tfrac{k}{2}$, then the
    stable $k$-homotopy categories $h_{k-1}\Sp_E$ and $h_{k-1}\Dscr(E_*E)$ are
    equivalent, where $\Sp_E$ denotes the $E$-local stable homotopy category
    and $\Dscr(E_*E)$ is the derived $\infty$-category of $E_*E$-comodules.
\end{remark}

We make no attempt here to prove this conjecture. However, we note that it
explains certain phenomena.

\begin{example}
    Consider the Schlichting and Dugger--Shipley examples as in~\ref{ex:sds}.
    The proof given in Dugger--Shipley that $\Dscr_\sing(\ZZ/p^2)$ is not
    equivalent to $\Dscr_\sing(\FF_p[\epsilon]/(\epsilon^2))$ works as follows.
    The class of $\FF_p$ itself (where either $p=0$ or $\epsilon=0$) generates
    the singularity category. Write $A$ for the endomorphism ring spectrum of
    $\FF_p$ in $\Dscr_\sing(\ZZ/p^2)$ and write $A_\epsilon$ for the
    endomorphism ring of $\FF_p$ in
    $\Dscr_\sing(\FF_p[\epsilon]/(\epsilon^2))$. The homotopy rings $\pi_*A$
    and $\pi_*A_\epsilon$ are isomorphic, but the connective covers $\tau_{\geq
    0}A$ and $\tau_{\geq 0}A_\epsilon$ are not equivalent (so that $A$ and
    $A_\epsilon$ are not equivalent). Let $B=\tau_{\leq 2}\tau_{\geq 0}A$
    and $B_\epsilon=\tau_{\leq 2}\tau_{\geq 0}A_\epsilon$. Dugger and
    Shipley show in fact that $B$ is not equivalent to $B_\epsilon$. What
    this means is that the stable $3$-categories $h_2\Dscr_\sing(\ZZ/p^2)$ and
    $h_2\Dscr_\sing(\FF_p[\epsilon]/(\epsilon^2))$ are not equivalent.
\end{example}

Schlichting proved that the algebraic $K$-theory of $A$ and $A_\epsilon$
differ. This motivates the following conjecture, which Schlichting effectively
established for the $n=1$ case of triangulated categories.

\begin{conjecture}\label{conj:non}
    There is no natural number $n$ such that for all small stable $\infty$-categories
    $\Cscr$ and $\Dscr$, if $h_{n-1}\Cscr\we h_{n-1}\Dscr$
    as stable $n$-categories,
    then $\K(\Cscr)\we\K(\Dscr)$, where $\K$ denotes now nonconnective
    algebraic $K$-theory as in~\cite{bgt1}.
\end{conjecture}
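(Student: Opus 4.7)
The plan is to argue the conjecture by producing, for each natural number $n \geq 1$, a pair of small stable $\infty$-categories $\Cscr_n$ and $\Dscr_n$ with $h_{n-1}\Cscr_n \we h_{n-1}\Dscr_n$ as stable $n$-categories but $\K(\Cscr_n) \not\simeq \K(\Dscr_n)$. The base case $n=1$ is handled by the Schlichting--Dugger--Shipley example recalled in Example~\ref{ex:sds}: the singularity categories of $\ZZ/p^2$ and $\FF_p[\epsilon]/(\epsilon^2)$ are equivalent as triangulated categories but have distinct nonconnective $K$-theories by Schlichting's theorem. The task is therefore to upgrade the phenomenon to arbitrary $n$.

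My strategy would be to work with $\Perfscr(R)$ for $\EE_1$-ring spectra $R$. If $R$ and $R'$ are $\EE_1$-rings whose Postnikov truncations $\tau_{\leq n-1}R$ and $\tau_{\leq n-1}R'$ are equivalent as $\EE_1$-rings, then one expects an equivalence $h_{n-1}\Perfscr(R) \we h_{n-1}\Perfscr(R')$ of stable $n$-categories: mapping spectra between fixed perfect modules are controlled by $R$, and their $(n-1)$-truncations retain only information depending on $\tau_{\leq n-1}R$. The first technical step is to verify this reduction carefully, perhaps by identifying $h_{n-1}\Perfscr(R)$ with an appropriate $n$-category built from the coherent module structure over $\tau_{\leq n-1}R$. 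Granting it, the problem becomes one of exhibiting, for each $n$, a pair of $\EE_1$-rings $R, R'$ with $\tau_{\leq n-1}R \we \tau_{\leq n-1}R'$ but $\K(R) \not\simeq \K(R')$.

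To produce such pairs, one natural route is Postnikov obstruction theory: fix a base $\EE_1$-ring and parametrize its $\EE_1$-extensions by a bimodule concentrated in degree $n$ using topological Andr\'e--Quillen cohomology. Different $k$-invariants give rings $R, R'$ with identical $(n-1)$-truncations, and one must then show the difference is detected by $\K$. An alternative source is the Pstr{\k{a}}gowski-type family of equivalences $h_{k-1}\Sp_E \we h_{k-1}\Dscr(E_*E)$ valid at arbitrarily large $k$ for sufficiently large primes $p$; passing to compact objects gives small stable $\infty$-categories whose $n$-homotopy categories agree but whose algebraic $K$-theories should differ. The hard part will be verifying the $K$-theoretic nonequivalence: there is no formal mechanism forcing a Postnikov $k$-invariant in degree $\geq n$ to survive in $\K$, and handling arbitrary $n$ will likely require trace-method input ($\THH$, $\TC$, or truncated variants) sensitive to high-degree Postnikov data, or detection in negative-degree Bass $K$-groups. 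This technical heart of the conjecture appears to require genuinely new input beyond the existing $n=1$ calculations, which is why we state it only as a conjecture.
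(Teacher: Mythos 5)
The statement you are addressing is stated in the paper only as a conjecture: the paper offers no proof, just the motivating $n=1$ evidence (Schlichting's $K$-theoretic separation of the Dugger--Shipley/Schlichting singularity categories recalled in Example~\ref{ex:sds}). So there is no argument in the paper to compare yours against, and your text is likewise not a proof: you yourself flag that the decisive step --- showing that nonconnective $\K$ actually distinguishes the proposed pairs $\Cscr_n$, $\Dscr_n$ --- is missing and would require new trace-method or obstruction-theoretic input. That missing step is the entire content of the conjecture, so the proposal cannot be accepted as a proof.

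Beyond that, the reduction you do commit to is incorrect as stated. You claim that if $\tau_{\leq n-1}R\we\tau_{\leq n-1}R'$ as $\EE_1$-rings then $h_{n-1}\Perfscr(R)\we h_{n-1}\Perfscr(R')$ as stable $n$-categories, on the grounds that $(n-1)$-truncated mapping spectra only see $\tau_{\leq n-1}R$. This already fails for $n=1$: $\tau_{\leq 0}\SS\we\ZZ$, yet $h_0\Perfscr(\SS)$ and $h_0\Perfscr(\ZZ)$ are not equivalent triangulated categories (for instance $2\cdot\id$ is nonzero on the mod-$2$ Moore spectrum $\SS/2$, while it vanishes on $\ZZ/2\in\Perfscr(\ZZ)$, reflecting $\eta\in\pi_1\SS$). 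The point is that homotopy classes of maps between perfect $R$-modules are computed through extensions involving arbitrarily high Postnikov data of $R$, so truncating the mapping spectra does not decouple them from $\tau_{\geq n}R$. The known situations in which truncated homotopy categories agree while the rings differ --- Patchkoria's $h\Dscr(\BP\langle 1\rangle)\we h\Dscr(\pi_*\BP\langle 1\rangle)$ and Pstr{\k{a}}gowski's $h_{k-1}\Sp_E\we h_{k-1}\Dscr(E_*E)$, both cited in the paper --- depend on sparseness of homotopy groups relative to a large prime, not merely on agreement of truncations. So even the ``easy half'' of your strategy needs a genuinely different mechanism (large-prime sparseness or a bespoke obstruction-theoretic comparison), after which the $K$-theoretic nonequivalence would still remain open.
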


\begin{remark}
    This conjecture is true. The following argument was explained to me by Niko
    Naumann and Irakli Patchkoria.
    Fix some $k$, $n$, and $p$ such that
    $p > n^2 + n +1 + \tfrac{k}{2}$. Then, $h_{k-1}\Sp_E \simeq h_{k-1}\Dscr(E_*E)$ where $E$
    is a $p$-local
    Landweber exact homology theory of height $n$. We have
    $L_{K(n)}\K(\text{compact objects in $\Sp_E$})$ is non-zero
    because we can apply the $E$-homology functor $$\text{compact objects in
    $\Sp_E$} \rightarrow \text{compact $E$-modules}$$ to get
    $\K(\text{compact objects in $\Sp_E$}) \rightarrow\K(\text{compact
    $E$-modules})\we\K(E)$ and then compose with the trace
    $\K(E)\rightarrow\THH(E)$ and then cap off the circle to get $\THH(E)
    \rightarrow E$. Here, we use that $L_{K(n)}E$ is non-zero. On the other
    hand, $L_{K(n)}\K(E_*E)=0$ for $n>1$ by Mitchell's theorem~\cite{mitchell}.
\end{remark}

The next conjecture is true for $n=1$ and $i=0$. We are not sure at the moment
what happens for $n=1$ in negative degrees.

\begin{conjecture}
    Let $\Cscr$ and $\Dscr$ be small stable $\infty$-categories.
    If $h_{n-1}\Cscr\we h_{n-1}\Dscr$ as stable $n$-categories, then
    $\K_i(\Cscr)\iso\K_i(\Dscr)$ for $i\leq n-1$. In fact, in this case, we
    guess that $\tau_{\leq n-1}\K(\Cscr)\we\tau_{\leq n-1}\K(\Dscr)$ as spectra.
\end{conjecture}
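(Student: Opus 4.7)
The plan is to lift algebraic $\K$-theory to a functor on stable $n$-categories and show that it recovers the $(n-1)$-truncated $\K$-theory of any $\infty$-categorical enhancement. Granting Conjecture~\ref{conj:nstable}, I would first construct a Waldhausen-style $\S$-construction on $\Cat_{n-1}^\ex$: for a stable $n$-category $\Escr$, one takes the $n$-category of biCartesian staircase diagrams indexed by $\mathrm{Ar}[k]$ which vanish on the diagonal, and calls this $\S_k^{(n)}\Escr$. Passing to the maximal subgroupoid levelwise yields a simplicial $(n-1)$-truncated space $\iota\S_\bullet^{(n)}\Escr$, and one defines the connective $\K$-theory as $\Omega|\iota\S_\bullet^{(n)}\Escr|$, iterating to produce a connective spectrum and, if possible, extending to negative degrees by the Verdier-quotient desuspension of Schlichting or Blumberg--Gepner--Tabuada~\cite{bgt1}. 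The compatibility clause~(ii) of Conjecture~\ref{conj:nstable} should yield levelwise equivalences $h_{n-1}\S_k\Cscr\we\S_k^{(n)}h_{n-1}\Cscr$ for every stable $\infty$-category $\Cscr$, since the $\S$-construction is built from finite (co)limits and exact sequences, and $h_{n-1}$ is by construction exact.

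The second step is to identify $\tau_{\leq n-1}\K(\Cscr)$ with $\K^{(n)}(h_{n-1}\Cscr)$. Taking the maximal subgroupoid commutes with $h_{n-1}$ in the sense that $\iota h_{n-1}\Dscr\we\tau_{\leq n-1}\iota\Dscr$ for any $\infty$-category $\Dscr$, because the mapping spaces in the groupoid core are precisely the subspaces of invertible morphisms inside the ambient mapping spaces. Combining with the previous step gives $\iota\S_\bullet^{(n)}h_{n-1}\Cscr\we\tau_{\leq n-1}\iota\S_\bullet\Cscr$ as simplicial spaces. It then remains to pass from levelwise $(n-1)$-truncations to $(n-1)$-truncations of realizations: the Bousfield--Kan spectral sequence
$$E^2_{p,q}=\pi_p\pi_q(\iota\S_\bullet\Cscr)\Rightarrow\pi_{p+q}|\iota\S_\bullet\Cscr|$$
shows that $\pi_i|\iota\S_\bullet\Cscr|$ for $i\leq n-1$ depends only on the simplicial abelian groups $\pi_q(\iota\S_\bullet\Cscr)$ with $q\leq n-1$, and hence only on the levelwise $(n-1)$-truncation. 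This would deliver $\tau_{\leq n-1}\K(\Cscr)\we\K^{(n)}(h_{n-1}\Cscr)$ on connective covers, and therefore the desired equivalence $\tau_{\leq n-1}\K(\Cscr)\we\tau_{\leq n-1}\K(\Dscr)$ whenever $h_{n-1}\Cscr\we h_{n-1}\Dscr$.

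The main obstacle is foundational: Conjecture~\ref{conj:nstable} itself is not yet available, and in particular it is not clear that a usable $\S$-construction can be carried out intrinsically in $\Cat_{n-1}^\ex$. One must produce the correct notion of biCartesian square in a stable $n$-category, verify that it is preserved under $h_{n-1}$ of genuinely biCartesian squares, and check that no additional equivalences are imposed on the simplicial space beyond the $(n-1)$-truncation detected above. Extending the argument to the full spectrum $\tau_{\leq n-1}\K$ (rather than its connective cover) requires showing that the Verdier-quotient delooping machine is itself intrinsic to the stable $n$-categorical data; this is where, as the author notes, even the $n=1$ case in negative degrees remains uncertain, and is presumably the deepest obstruction. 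Granting these foundations, the outline above resolves the isomorphism $\K_i(\Cscr)\iso\K_i(\Dscr)$ for $0\leq i\leq n-1$ unconditionally, and the spectrum-level guess down to the connective range.
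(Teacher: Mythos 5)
This statement is one of the speculative conjectures of Section~\ref{sub:nstable}: the paper offers no proof of it (and the author explicitly records uncertainty even about the $n=1$ case in negative degrees), so there is no argument of the paper to compare yours against. What you have written is a research plan, and its central step is asserted rather than proved. The claim that ``$h_{n-1}\S_k\Cscr\we\S_k^{(n)}h_{n-1}\Cscr$ because the $\S$-construction is built from finite (co)limits and $h_{n-1}$ is exact'' does not follow from clause (ii) of Conjecture~\ref{conj:nstable} or from exactness. The $\infty$-category $\S_k\Cscr$ is a full subcategory of $\Fun(\mathrm{Ar}[k],\Cscr)$, and mapping spaces in a diagram category are limits of mapping spaces in $\Cscr$ (already for $\Fun(\Delta^1,\Cscr)$ one has $\Map(f,g)\we\Map(X,X')\times_{\Map(X,Y')}\Map(Y,Y')$); the truncation $\tau_{\leq n-1}$ does not commute with such limits, so $h_{n-1}$ of a diagram category is not the diagram category of $h_{n-1}$'s, and diagrams in $h_{n-1}\Cscr$ need not lift coherently to $\Cscr$ --- this is exactly the non-functoriality of cones that makes $\K$-theory fail to be a triangulated invariant in the first place. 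Indeed, Example~\ref{ex:sds} (Schlichting, Dugger--Shipley) shows the unbounded analogue of your commutation claim is false, so any correct argument must isolate precisely how the $(n-1)$-truncation bound intervenes; that is the entire mathematical content of the conjecture, and your outline leaves it untouched. (A smaller indexing point: the core satisfies $\iota h_{n-1}\Dscr\we\tau_{\leq n}\iota\Dscr$, not $\tau_{\leq n-1}\iota\Dscr$, since passing to $h_{n-1}$ truncates the mapping spaces, hence the loop spaces of the core.)

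Two further gaps between your plan and the statement itself. First, $\K$ here is nonconnective $\K$-theory as in~\cite{bgt1}, so ``$\K_i(\Cscr)\iso\K_i(\Dscr)$ for $i\leq n-1$'' includes all negative degrees and the full Postnikov truncation $\tau_{\leq n-1}\K$; your argument, even granting its missing step, only addresses the connective range, and you acknowledge that making the Verdier-quotient delooping intrinsic to the stable $n$-categorical data is open. Second, everything is conditional on Conjecture~\ref{conj:nstable} and on an intrinsic $\S$-construction in $\Cat_{n-1}^{\ex}$ whose existence and basic properties (what a biCartesian square in a stable $n$-category is, and that $h_{n-1}$ sends biCartesian squares to them) you have not supplied. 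The reduction of $\tau_{\leq n-1}$ of the realization to the levelwise truncations is fine (truncation commutes with colimits up to further truncation, so the Bousfield--Kan argument can even be avoided), but the conclusion ``this resolves $\K_i(\Cscr)\iso\K_i(\Dscr)$ for $0\leq i\leq n-1$ unconditionally'' overstates what the outline delivers: the statement remains open, as it is in the paper.
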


\begin{remark}
    This conjecture is also true. See~\cite[Theorem~6.11]{raptis} of Raptis.
\end{remark}

Now, we turn to stable $n$-categories and uniqueness of enhancements.
Assuming the theory exists, we can make sense of a $t$-structure on a stable
$n$-category. In the case of $h_{n-1}\Cscr$ where $\Cscr$ is a stable
$\infty$-category, giving a $t$-structure on $\Cscr$ is equivalent to giving
one on $h_{n-1}\Cscr$. For $n=1$, this was Lemma~\ref{lem:t}.
Given a $t$-structure on $h_{n-1}\Cscr$, the heart is still an abelian
category. The next definition is due to Lurie~\cite[Section~C.5.4]{sag}.

\begin{definition}
    A Grothendieck abelian $n$-category is an $n$-category equivalent to
    $\tau_{\leq n-1}\Cscr$ for a Grothendieck prestable $\infty$-category
    $\Cscr$. We let $\mathrm{Groth}_{n-1}\subseteq\Pr^\L$ be the full subcategory of
    presentable $\infty$-categories on the Grothendieck abelian $n$-categories.
\end{definition}

\begin{example}
    The full subcategory $\Dscr(\ZZ)_{[0,n-1]}\subseteq\Dscr(\ZZ)$ of complexes $X$ with
    $\H_i(X)=0$ for $i\notin[0,n-1]$ is a Grothendieck abelian $n$-category.
\end{example}

The next proposition relates $n$-complicial Grothendieck prestable $\infty$-categories to Grothendieck abelian
$n$-categories and stable $n$-categories.

\begin{proposition}
    Suppose that $\Ascr$ and $\Bscr$ are $n$-complicial separated Grothendieck prestable
    $\infty$-categories. Assuming Conjecture~\ref{conj:nstable}, the following conditions are equivalent:
    \begin{enumerate}
        \item[{\rm (a)}] the stable $\infty$-categories $\Sp(\Ascr)$ and
            $\Sp(\Bscr)$ are equivalent;
        \item[{\rm (b)}] the abelian $n$-categories $\Ascr_{\leq n-1}$ and
            $\Bscr_{\leq n-1}$ are equivalent;
        \item[{\rm (c)}] the stable $n$-categories $h_{n-1}\Sp(\Ascr)$ and
            $h_{n-1}\Sp(\Bscr)$ are equivalent.
    \end{enumerate}
\end{proposition}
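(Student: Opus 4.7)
The plan is to establish the cycle of implications (b) $\Rightarrow$ (a) $\Rightarrow$ (c) $\Rightarrow$ (b), with the first two being formal and the last constituting the substantive content.

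For (b) $\Rightarrow$ (a), I would invoke a natural $n$-complicial generalization of \cite[C.5.4.5]{sag} identifying the $\infty$-category of separated $n$-complicial Grothendieck prestable $\infty$-categories with a suitable full subcategory of $\Groth_{n-1}$ via the heart functor $\Ascr \mapsto \Ascr_{\leq n-1}$. Granted such a result, an equivalence $\Ascr_{\leq n-1} \we \Bscr_{\leq n-1}$ lifts uniquely to an equivalence $\Ascr \we \Bscr$ of Grothendieck prestable $\infty$-categories, and applying $\Sp(-)$ yields (a). For (a) $\Rightarrow$ (c), one simply applies the functor $h_{n-1} \colon \Cat_\infty^\ex \to \Cat_{n-1}^\ex$ from Conjecture~\ref{conj:nstable}(ii), which preserves equivalences.

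For (c) $\Rightarrow$ (b), the plan is to mimic the strategy of Theorem~\ref{mt:2} in this $n$-categorical setting: first equip each $h_{n-1}\Sp(\Ascr)$ with the $t$-structure induced from $\Sp(\Ascr)$ and check that it is recoverable from the underlying stable $n$-category (the $n$-categorical analog of Lemma~\ref{lem:t}); next argue that any equivalence of stable $n$-categories between objects of this form is $t$-exact, using the separated $n$-complicial hypotheses to rigidify the situation; finally, recover $\Ascr_{\leq n-1}$ intrinsically as the subcategory of objects that are both connective and $(n-1)$-truncated, from which $\Ascr_{\leq n-1} \we \Bscr_{\leq n-1}$ follows.

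The main obstacle lies in developing the theory of $t$-structures on stable $n$-categories within the conjectural framework of Section~\ref{sub:nstable} and in proving the automatic $t$-exactness of equivalences. Even at $n = 1$, the latter is not formal: it is precisely the separated $0$-complicial hypothesis that forces the uniqueness content of Theorem~\ref{mt:2}, and the analogous rigidity statement at general $n$ would require identifying the correct ``separated $n$-complicial'' replacement at the $n$-categorical level.
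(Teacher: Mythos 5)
Your cycle (b)~$\Rightarrow$~(a)~$\Rightarrow$~(c)~$\Rightarrow$~(b) matches the paper's argument in outline, and the easy implications are handled identically: (a)~$\Rightarrow$~(c) by applying $h_{n-1}$, and the prestable/abelian correspondence via \cite[C.5.4.5]{sag}. Two differences are worth noting. First, you do not need an ``$n$-complicial generalization'' of \cite[C.5.4.5]{sag}: that result is already stated for arbitrary $n$, identifying separated $n$-complicial Grothendieck prestable $\infty$-categories with Grothendieck abelian $n$-categories via $\Ascr\mapsto\Ascr_{\leq n-1}$, and the paper cites it exactly in this form for the equivalence of (a) and (b). Second, for (c)~$\Rightarrow$~(b) the paper is much terser than your plan: it simply writes $\Ascr_{\leq n-1}\we(h_{n-1}\Sp(\Ascr))_{[0,n-1]}\we(h_{n-1}\Sp(\Bscr))_{[0,n-1]}\we\Bscr_{\leq n-1}$, i.e.\ it recovers the abelian $n$-category as the $[0,n-1]$ range of the $t$-structure on the stable $n$-category (your final step) and transports it across the given equivalence; it does not prove the ``automatic $t$-exactness of equivalences'' that you propose as the intermediate step, but rather treats the $t$-structures as matched within the conditional framework of Conjecture~\ref{conj:nstable}, in the spirit of the $n$-categorical analogue of Lemma~\ref{lem:t}. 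The genuine rigidity question you flag as the main obstacle is thus not settled in the paper either; it is essentially what the conjecture immediately following the proposition (the $n$-categorical analogue of Theorem~\ref{mt:2}) asks for, and the paper only ``sketches'' that separately. So your proposal is sound but aims to prove more than the paper's actual argument does at this point; to reproduce the proposition as proved, the displayed chain of identifications is all that is used.
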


\begin{proof}
    The equivalence of (a) and (b) is proved in~\cite[C.5.4.5]{sag}. Clearly, (a)
    implies (c). So, assume (c). Then, $$\Ascr_{\leq n-1}\we
    (h_{n-1}\Sp(\Ascr))_{[0,n-1]}\we(h_{n-1}\Sp(\Bscr))_{[0,n-1]}\we\Bscr_{\leq
    n-1},$$ which is exactly (b). Here, $(h_{n-1}\Sp(\Ascr))_{[0,n-1]}$ refers
    to the objects in the given range in the $t$-structure on the stable
    $n$-category $h_{n-1}\Sp(\Ascr)$.
\end{proof}

The $n=0$ case of the next conjecture is exactly our Theorem~\ref{mt:2}.

\begin{conjecture}
    Let $\Ascr$ be an $n$-complicial separated Grothendieck prestable
    $\infty$-category. Suppose that $\Cscr$ is a stable presentable
    $\infty$-category together with an exact equivalence $h_{n-1}\Cscr\we
    h_{n-1}\Sp(\Ascr)$ of stable $n$-categories. Then, $\Cscr\we\Sp(\Ascr)$.
\end{conjecture}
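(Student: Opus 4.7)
The plan is to mirror the proof of Theorem~\ref{thm:2}, using the additional $n$-categorical information carried by the equivalence $h_{n-1}\Cscr \simeq h_{n-1}\Sp(\Ascr)$ to identify the connective part of $\Cscr$ with $\Ascr$ as an $n$-complicial separated Grothendieck prestable $\infty$-category. The decisive input is a classification result (the $n$-categorical upgrade of \cite[C.5.4.5]{sag}) saying that an $n$-complicial separated Grothendieck prestable $\infty$-category is determined by its $(n-1)$-truncation as an abelian $n$-category; this is exactly the equivalence (b)$\Leftrightarrow$(a) in the proposition immediately preceding the conjecture. The payoff is that the full $\infty$-category $\Cscr_{[0,n-1]}$ already has $(n-1)$-truncated mapping spaces, so it is visible inside $h_{n-1}\Cscr$ without loss of information.

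First I would transfer the $t$-structure on $\Sp(\Ascr)$ across the equivalence. This requires an $n$-categorical analog of Lemma~\ref{lem:t} stating that a $t$-structure on a stable $\infty$-category is the same datum as a $t$-structure on its $n$-homotopy category; the proof should be formally the same as the proof of Lemma~\ref{lem:t}, since the axioms of Definitions~\ref{def:ttriangulated} and~\ref{def:tstable} only involve $\pi_0$ of mapping spaces. Next I would run through the same $n=0$ checks as in Theorem~\ref{thm:2}: left separatedness is detected on $h\Cscr$ by Lemma~\ref{lem:detection}; Proposition~\ref{prop:ba} gives $\Cscr^+\simeq\Sp(\Ascr)^+\simeq\Dscr^+(\Ascr^\heart)$, so $\Cscr_{\leq 0}$ is presentable; presentability of $\Cscr$ together with \cite[1.4.4.13]{ha} upgrades this to presentability of $\Cscr_{\geq 0}$; compatibility with filtered colimits is detected by Lemma~\ref{lem:detection}(c); and right completeness follows via Proposition~\ref{prop:sepcomplete} and Lemma~\ref{lem:detection}. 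Thus $\Cscr_{\geq 0}$ is a separated Grothendieck prestable $\infty$-category.

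The crux is to identify $\Cscr_{\geq 0}$ with $\Ascr$. Because $X\in\Cscr_{\leq n-1}$ and $Y\in\Cscr_{\geq 0}$ force $\Map_\Cscr(X,Y)$ to be $(n-1)$-truncated, the equivalence $h_{n-1}\Cscr \simeq h_{n-1}\Sp(\Ascr)$ restricts to an honest equivalence of $n$-categories $\Cscr_{[0,n-1]}\simeq\Ascr_{\leq n-1}$ compatible with their abelian $n$-category structures. Granting the classification mentioned above, this identification lifts to $\Cscr_{\geq 0}\simeq\Ascr$ as Grothendieck prestable $\infty$-categories, and Lemma~\ref{lem:sp} together with right completeness on both sides promotes this to $\Cscr\simeq\Sp(\Ascr)$, as desired.

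The main obstacle is verifying $n$-compliciality of $\Cscr_{\geq 0}$ from the $n$-categorical datum alone, since the condition $X\in\Cscr_{\leq n}$ in the definition of $n$-compliciality is properly detectable in $h_n\Cscr$ rather than in $h_{n-1}\Cscr$. I expect this to be obtainable in two steps: first, $n$-compliciality for bounded-above objects follows from the identification $\Cscr^+\simeq\Sp(\Ascr)^+$ via Proposition~\ref{prop:ba}; second, a general connective object $Y\in\Cscr_{\geq 0}$ is recovered as a filtered colimit of its bounded truncations (using compatibility of the $t$-structure with filtered colimits and right completeness), and $n$-compliciality passes through such colimits. Making this precise, along with the $n$-categorical analog of Lemma~\ref{lem:t} and the classification statement, rests on the currently conjectural foundations of stable $n$-categories outlined in Conjecture~\ref{conj:nstable}; beyond those foundational points, the structure of the proof is parallel to the $n=0$ case and should present no further difficulty.
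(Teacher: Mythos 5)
The statement you are addressing is not proved in the paper: it is stated as a conjecture, and the remark immediately following it says only that ``one can sketch a proof along the lines of our proof of Theorem~\ref{thm:2}'' and that such a sketch cannot be made rigorous at present because it depends on the conjectural theory of stable $n$-categories (Conjecture~\ref{conj:nstable}). Your proposal is precisely that sketch, with the same conditional caveats: transfer the $t$-structure via an $n$-categorical analogue of Lemma~\ref{lem:t}, run the checks from the proof of Theorem~\ref{thm:2} (Proposition~\ref{prop:ba}, Lemma~\ref{lem:detection}, Proposition~\ref{prop:sepcomplete}, \cite[1.4.4.13]{ha}) to see that $\Cscr_{\geq 0}$ is a separated Grothendieck prestable $\infty$-category, restrict the equivalence to the $[0,n-1]$-part, and invoke the classification \cite[C.5.4.5]{sag} together with Lemma~\ref{lem:sp}. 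So at the level of strategy you coincide with what the paper has in mind.

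Two corrections within the sketch. First, your truncatedness claim has source and target interchanged: $\Map_\Cscr(X,Y)$ is $(n-1)$-truncated when $X\in\Cscr_{\geq 0}$ and $Y\in\Cscr_{\leq n-1}$, since then $\pi_i\Map_\Cscr(X,Y)\iso\Hom_{h\Cscr}(X[i],Y)=0$ for $i\geq n$; with $X$ coconnective and $Y$ connective, as you wrote it, there is no such vanishing. For objects of $\Cscr_{[0,n-1]}$ both conditions hold, so the conclusion you need is still correct. Second, and more substantively, your proposed two-step verification of $n$-compliciality does not work as stated: a connective object $Y$ is \emph{not} a filtered colimit of its coconnective truncations $\tau_{\leq m}Y$ (these receive maps \emph{from} $Y$; the associated tower governs completion, not a colimit), so the passage from bounded-above objects to general connective objects fails. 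But no such workaround is needed, and this is not in fact the delicate point you fear: exactly as for $0$-compliciality in the proof of Theorem~\ref{thm:2}, $n$-compliciality is a condition expressed entirely in terms of the $t$-structure, homotopy objects, and hom-sets --- membership of $X$ in $\Cscr_{\leq n}$, existence of a map $X\rightarrow Y$, surjectivity of $\pi_0X\rightarrow\pi_0Y$ in the heart --- and hence is detected on the triangulated homotopy category $h\Cscr\we h\Sp(\Ascr)$ equipped with its transferred $t$-structure; it therefore holds for $\Cscr_{\geq 0}$ because $\Ascr$ is $n$-complicial. With that replacement, your argument is the intended conditional sketch, modulo the foundational issues you already flag.
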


\begin{remark}
    One can sketch a proof along the lines of our proof of Theorem~\ref{mt:2}.
    However, it obviously depends on the notion of an exact functor of stable
    $n$-categories, so it will not be rigorous at the moment.
\end{remark}

In terms of the dg enhancement of this kind of question, we simply give the
following example.

\begin{example}\label{ex:dugger-shipley}
    Dugger and Shipley~\cite{dugger-shipley-topological} give dg algebras $A$ and $B$ over $\ZZ$ with
    $\pi_*A\iso\pi_*B\iso\Lambda_{\FF_2}(g_2)$, where $|g_2|=2$. They are equivalent as
    $\SS$-algebras but not as as $\ZZ$-algebras. In fact, they are not even Morita
    equivalent over $\ZZ$. Thus, $\D(A)\we\D(B)$ admits two distinct dg categorical
    enhancements. Moreover, those enhancements are separated and $2$-complicial. Hence, we
    see that the Grothendieck abelian $3$-categories $\Dscr(A)_{[0,2]}$ and $\Dscr(B)_{[0,2]}$ are
    different as $\Dscr(\ZZ)_{[0,2]}$-linear $3$-categories;
    equivalently, the stable $3$-categories $h_2\Dscr(A)$ and $h_2\Dscr(B)$ are not
    equivalent as $h_2\Dscr(\ZZ)$-linear stable $3$-categories.
\end{example}

\subsection{Enhancements and $t$-structures}\label{sub:conjecture}

Known examples of triangulated categories not admitting enhancements or
admitting multiple enhancements do not admit obvious finitely-complicial $t$-structures.
Situations in which there is a bounded $t$-structure seem to be much closer to
algebra and we conjecture that they exhibit a strong rigidity property with
respect to their enhancements.

We give two wildly optimistic conjectures in this section.

\begin{conjecture}\label{conj:crazy}
    Let $\Cscr$ be a stable presentable $\infty$-category with an accessible
    right complete $t$-structure which is
    compatible with filtered colimits and is additionally $n$-complicial for some
    $n$. Then, the homotopy category $h\Cscr$ admits a unique $\infty$-categorical enhancement.
\end{conjecture}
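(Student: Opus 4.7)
The plan is to adapt the proof of Theorem~\ref{thm:2} at a higher categorical level. Let $\Dscr$ be an $\infty$-categorical enhancement of $h\Cscr$. Lemma~\ref{lem:t} lifts the given $t$-structure on $h\Cscr$ to a $t$-structure on $\Dscr$, inducing an equivalence of hearts $\Ascr := \Cscr^\heart \we \Dscr^\heart$. I would first treat the case where $\Dscr$ is presentable, reserving the general case for an extension of the methods of Appendix~\ref{sub:appendix} beyond the $0$-complicial setting. In the presentable case, Proposition~\ref{prop:ba} identifies $\Dscr^+ \we \Cscr^+ \we \Dscr^+(\Ascr)$, from which~\cite[1.4.4.13]{ha} yields presentability of $\Dscr_{\geq 0}$. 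Lemma~\ref{lem:detection} transports compatibility with filtered colimits, compatibility with countable products, and right separation to $\Dscr$; right completeness then follows from Proposition~\ref{prop:sepcomplete}. The $n$-compliciality of $\Dscr_{\geq 0}$ is detected on $h\Dscr \we h\Cscr$. By Lemma~\ref{lem:sp}, it then suffices to produce an equivalence $\Dscr_{\geq 0} \we \Cscr_{\geq 0}$ of prestable $\infty$-categories.

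The crux is therefore an $n$-complicial analog of~\cite[C.5.4.5]{sag}: one should classify Grothendieck prestable $\infty$-categories satisfying the hypotheses of the conjecture by a single $(n{+}1)$-categorical piece of data. A natural candidate, motivated by Section~\ref{sub:nstable}, is the Postnikov truncation $\tau_{\leq n}\Cscr_{\geq 0}$ viewed as a Grothendieck abelian $(n{+}1)$-category, or equivalently the stable $(n{+}1)$-category $h_n \Sp(\Cscr_{\geq 0})$ equipped with its $t$-structure. The proposed classification would read: two such $\Cscr_{\geq 0}$ and $\Dscr_{\geq 0}$ are equivalent as prestable $\infty$-categories if and only if their $\tau_{\leq n}$-truncations are equivalent as Grothendieck abelian $(n{+}1)$-categories. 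Granting this theorem, one extracts the requisite $(n{+}1)$-categorical equivalence from $h\Cscr \we h\Dscr$ by applying $h_n$ to mapping spectra, closing the argument.

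The main obstacle is this $n$-complicial classification itself, which is absent from the literature. For $n = 0$ the heart alone suffices, but Example~\ref{ex:dugger-shipley} shows that for $n \geq 1$ the heart no longer determines the prestable category, so the invariant must carry genuine $(n{+}1)$-categorical information. Constructing such a theorem would plausibly require three pieces foreshadowed in Section~\ref{sub:nstable}: a working theory of Grothendieck abelian $(n{+}1)$-categories with their own Gabriel--Popescu theorem, an $n$-complicial Dold--Kan-type correspondence embedding the Grothendieck abelian $(n{+}1)$-category side fully faithfully into the $n$-complicial prestable side, and a universal property identifying $\Cscr_{\geq 0}$ as the initial $n$-complicial Grothendieck prestable $\infty$-category equipped with a compatible map from a prescribed $(n{+}1)$-categorical truncation. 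A secondary difficulty is the status of left separation of $\Dscr_{\geq 0}$, which does not obviously follow from the hypotheses via Lemma~\ref{lem:detection} and may need to be either imposed as an additional hypothesis or deduced from the combination of $n$-compliciality, right completeness, and filtered-colimit compatibility before the classification can be invoked.
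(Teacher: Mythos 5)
You are attempting to prove Conjecture~\ref{conj:crazy}, which the paper does not prove: it is presented there as ``wildly optimistic,'' supported only by evidence (Schwede's rigidity theorem for $h\Sp$ and Hutschenreuter's thesis for $\Dscr(\tau_{\leq n}\SS_{(p)})$). So there is no argument of the paper to compare against; the only question is whether your sketch would settle the conjecture, and it would not. The decisive gap is the sentence asserting that ``one extracts the requisite $(n{+}1)$-categorical equivalence from $h\Cscr\we h\Dscr$ by applying $h_n$ to mapping spectra.'' A triangulated equivalence supplies only $\pi_0$ of mapping spectra together with the shift and the distinguished triangles; it gives no functorial access to $\tau_{\leq n}\MapSp$, to coherent composition, or to the Postnikov data needed to produce an equivalence of truncations $\tau_{\leq n}\Cscr_{\geq 0}\we\tau_{\leq n}\Dscr_{\geq 0}$ as Grothendieck abelian higher categories (equivalently, of the stable higher homotopy categories $h_n$). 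Recovering that higher information from $h\Cscr$ is precisely the content of the conjecture, not a formal step: Patchkoria's example $h\Dscr(\BP\langle 1\rangle)\we h\Dscr(\pi_*\BP\langle 1\rangle)$ shows that a triangulated equivalence need not remember even low truncations of mapping spectra, and the known positive evidence (Schwede's rigidity) reconstructs the higher data by hand via Toda brackets rather than by any detection principle of the kind your reduction requires. By contrast, the $n=0$ argument of Theorem~\ref{thm:2} works exactly because its classifying invariant, the heart, is a full subcategory of the homotopy category and hence visible to a triangulated equivalence; for $n\geq 1$ the invariant is genuinely higher-categorical and invisible there, so your reduction relocates the entire difficulty into an unproved step.

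Two further points. The classification you describe as ``absent from the literature'' is, in the separated case, essentially available: \cite[C.5.4.5]{sag} classifies $n$-complicial separated Grothendieck prestable $\infty$-categories by their truncations viewed as Grothendieck abelian higher categories, which is what the Proposition of Section~\ref{sub:nstable} records; so that is not the true obstruction. On the other hand, your secondary worry about left separation is worse than you suggest: it cannot be deduced from the hypotheses, since the unseparated derived category $\cDscr(\Ascr)$ of a locally coherent Grothendieck abelian category satisfies all of them with $n=0$ without being left separated. A proof of the conjecture must therefore also handle non-separated (for instance anticomplete) enhancements, as the paper does separately in Theorem~\ref{thm:1}, rather than reducing everything to the separated classification. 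Finally, Example~\ref{ex:dugger-shipley} does not show what you cite it for: there $\Dscr(A)\we\Dscr(B)$ as stable $\infty$-categories, compatibly with the $t$-structures, so the failure in that example is only of $\ZZ$-linear (dg) enhancements; the point you want---that the heart alone does not determine an $n$-complicial prestable $\infty$-category for $n\geq 1$---is correct but needs a different justification.
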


The reason it is not too crazy to ask for the conjecture to be true is because of the
work~\cite{schwede-unique,schwede-shipley,schwede-rigid} of Schwede--Shipley
and Schwede on the homotopy category of spectra.
For example, Schwede proves that the homotopy category $h\Sp$ admits a unique
stable model category enhancement. The argument is basically to study Toda
brackets, which can be constructed using only the triangulated structure, and
to appeal to the fact that the stable homotopy ring (at each prime) can be generated by Toda
brackets of certain low-degree classes in a sense made precise
in~\cite{schwede-unique}.\footnote{
The argument of~\cite{schwede-rigid} can be re-written to
prove that $h\Sp$ admits a unique $\infty$-categorical enhancement in the sense
of this paper. Note also that the argument involving Toda brackets takes place entirely within the
prestable $\infty$-category $\Sp_{\geq 0}$ of connective spectra.}

We thank S. Schwede for bringing to our attention the following evidence for
Conjecture~\ref{conj:crazy}.

\begin{example}
    The unpublished thesis of K. Hutschenreuter~\cite{hutschenreuter} establishes the
    conjecture for $\Cscr\we\Dscr(\tau_{\leq n}\SS_{(p)})$ for $n\geq
    p^2(2p-2)-1$ when $p$ is an odd prime and for $n\geq 0$ when $p=2$.
\end{example}

\begin{example}
    Consider $R=\tau_{\leq 2}\SS$, which is an $\EE_\infty$-ring spectrum with
    non-zero homotopy groups
    $\pi_0R\iso\ZZ$ and $\pi_1R\iso\pi_2R\iso\ZZ/2$. In fact,
    $\pi_*R\iso\ZZ[\eta]/(2\eta,\eta^3)$, where $|\eta|=1$. As can be found
    in~\cite[p. 177]{mosher-tangora}, we have a Toda bracket $\eta^2\in\langle
    2,\eta,2\rangle$. In particular $\D(R)$ is not equivalent to
    $\D(\pi_*R)$. The Toda brackets somehow help us capture certain higher
    homotopical bits of information in the homotopy category.
\end{example}

The condition that $\Cscr$ be $n$-complicial for some $n$ is critical.

\begin{example}
    Fix a prime $p\geq 5$ and 
    consider a Brown--Peterson ring spectrum $\BP\langle 1\rangle$, which is a
    certain connective $\EE_1$-ring spectrum with $\pi_*\BP\langle 1\rangle\iso\ZZ_{(p)}[v_1]$ where $|v_1|=2p-2$.
    Patchkoria has proven in~\cite[Theorem~1.1.3]{patchkoria} that there is a triangulated equivalence $h\Dscr(\BP\langle 1\rangle)\we
    h\Dscr(\pi_*\BP\langle 1\rangle)$, where $\pi_*\BP\langle 1\rangle$ is
    viewed as a formal $\EE_1$-ring spectrum. This equivalence cannot come from an
    equivalence of the underlying stable $\infty$-categories, so we have
    another example of triangulated categories with multiple
    $\infty$-categorical enhancements. Both sides admit an accessible right and
    left-complete $t$-structure each of which is compatible with filtered colimits;
    in each case the heart is $\Mod_{\ZZ_{(p)}}$. However, since $\BP\langle 1\rangle$ is
    not bounded above, the $t$-structure on $\Dscr(\BP\langle 1\rangle)$ is not
    $n$-complicial for any $n$ by~\cite[C.5.5.15]{sag}.
\end{example}

We can use this example to prove the following theorem.

\begin{theorem}\label{thm:nogo}
    There exists a small triangulated category $\T$ with a bounded
    $t$-structure which admits multiple non-equivalent enhancements.
\end{theorem}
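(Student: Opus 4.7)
The plan is to exploit the preceding Patchkoria example. Set $R=\BP\langle 1\rangle$ and $S=\pi_*\BP\langle 1\rangle\we\ZZp[v_1]$, viewed as a formal $\EE_1$-ring spectrum, and work at a fixed prime $p\geq 5$. The hypothesis on the Patchkoria example gives a triangulated equivalence $h\Dscr(R)\we h\Dscr(S)$ while $\Dscr(R)$ and $\Dscr(S)$ are inequivalent as stable $\infty$-categories.

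The first step is to descend to a small triangulated category. Compactness is detected purely by the triangulated structure: an object $X$ is compact iff $\Hom(X,-)$ commutes with arbitrary coproducts. Hence the Patchkoria equivalence restricts to a triangulated equivalence $\T\colonequals h\Perfscr(R)\we h\Perfscr(S)$, and $\T$ is essentially small.

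Second, I would put a bounded $t$-structure on $\T$. Since $S\we\ZZp[v_1]$ is a (graded) regular ring of Krull dimension two, every perfect $S$-module is represented by a bounded complex of finitely generated projectives, so the standard $t$-structure on $\Dscr(S)$ restricts to a bounded $t$-structure on $\Perfscr(S)$. Transporting this along the equivalence $h\Perfscr(S)\we\T$ gives a bounded $t$-structure on $\T$ (note that this transported $t$-structure is definitely not the restriction of the standard $t$-structure on $\Dscr(R)$, which does not restrict to $\Perfscr(R)$ since $R$ itself is perfect yet unbounded above).

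Finally, the enhancements $\Perfscr(R)$ and $\Perfscr(S)$ of $\T$ are non-equivalent as stable $\infty$-categories: if $\Perfscr(R)\we\Perfscr(S)$, then passing to $\Ind$-completions would give
\[
\Dscr(R)\we\Ind(\Perfscr(R))\we\Ind(\Perfscr(S))\we\Dscr(S),
\]
contradicting the Patchkoria example. Thus $\T$ is a small triangulated category with a bounded $t$-structure admitting at least two non-equivalent enhancements. The only real obstacle is the input of Patchkoria's theorem; the descent to compacts, the transport of a bounded $t$-structure, and the $\Ind$-completion argument are all formal once that is in hand.
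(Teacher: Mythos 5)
Your first and last steps are fine: compactness is indeed detected in the homotopy category, so the Patchkoria equivalence restricts to $h\Perfscr(R)\we h\Perfscr(S)$, and the $\Ind$-completion argument correctly shows $\Perfscr(R)\not\we\Perfscr(S)$. The gap is the middle step: the standard $t$-structure on $\Dscr(S)$ does \emph{not} restrict to a bounded $t$-structure on $\Perfscr(S)$. In Patchkoria's theorem $S=\pi_*\BP\langle 1\rangle\iso\ZZp[v_1]$ is viewed as a \emph{formal $\EE_1$-ring spectrum}, so $v_1$ sits in homological degree $2p-2$; the unit $S$ is then a perfect $S$-module whose homotopy groups are nonzero in every degree $k(2p-2)$, $k\geq 0$. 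Thus $S$ (and any non-$v_1$-torsion perfect module) is unbounded above in the Postnikov $t$-structure---exactly the problem you flag on the $R$-side, but it occurs identically on the $S$-side. The appeal to regularity conflates ``bounded complex of finitely generated graded projectives'' with ``bounded homotopy'': here the internal grading of the ring \emph{is} the homological grading, so a bounded complex of graded free modules still has homotopy in infinitely many degrees. Your picture of $\Perfscr(S)$ as perfect complexes over a discrete commutative ring concentrated in degree $0$ is not the category appearing in Patchkoria's equivalence. Consequently the structure you transport to $\T=h\Perfscr(R)$ is at best a $t$-structure that is not bounded, and the theorem requires boundedness (this is the whole point, since the paper's conjecture concerns bounded, $n$-complicial $t$-structures).

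The paper's proof avoids this by shrinking further: it takes $\Dscr^b(\BP\langle 1\rangle)\subseteq\Dscr(\BP\langle 1\rangle)^\omega$ to be the full subcategory of $v_1$-torsion compact objects---equivalently, the bounded objects with finitely presented homotopy groups---and likewise $\Dscr^b(\pi_*\BP\langle 1\rangle)$ on the formal side. On these subcategories the standard $t$-structure is genuinely bounded, the triangulated equivalence restricts, and the two enhancements are distinguished not by an $\Ind$-completion argument (which no longer recovers $\Dscr(R)$ and $\Dscr(S)$ after passing to the torsion subcategory) but by a pointwise obstruction: $\BP\langle 1\rangle/v_1^k$ admits no Eilenberg--Mac Lane ($\ZZ$-algebra) structure for $k\geq 2$, by Dugger--Shipley, whereas every object of $\Dscr^b(\pi_*\BP\langle 1\rangle)$ does. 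To salvage your version with all of $\Perfscr(R)$ you would need to exhibit some other bounded $t$-structure on it, which your argument does not provide.
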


\begin{proof}
    Fix $p\geq 5$ and $\BP\langle 1\rangle$ as above.
    Let $\Dscr^b(\BP\langle 1\rangle)\subseteq\Dscr(\BP\langle
    1\rangle)^\omega$ be the full subcategory of $v_1$-torsion objects and
    similarly for $\Dscr^b(\pi_*\BP\langle 1\rangle)$. Equivalently, these are
    the subcategories consisting of bounded objects with finitely presented
    homotopy groups. We have an equivalence $h\Dscr^b(\BP\langle 1\rangle)\we
    h\Dscr^b(\pi_*\BP\langle 1\rangle)$, but this cannot lift to an equivalence
    of stable $\infty$-categories. Indeed, $\BP\langle 1\rangle/v_1^k$ does
    not admit the structure of an Eilenberg--Mac Lane spectrum for $k\geq 2$.
    See for example~\cite[Remark~5.4]{dugger-shipley-topological} which shows
    that the reduction of $\BP\langle 1\rangle/v_1^k$ modulo $p$ does not admit
    the structure of a $\ZZ$-algebra in spectra.
\end{proof}

The $t$-structures in Theorem~\ref{thm:nogo} are not $n$-complicial for any
$n$. This suggests the following conjecture.

\begin{conjecture}
    Let $\T$ be a small triangulated category with a bounded $t$-structure
    which is $n$-complicial for some $n$. Then, $\T$ admits a unique
    $\infty$-categorical which is unique.
\end{conjecture}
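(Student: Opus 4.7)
The plan is to adapt the proof of Theorem~\ref{thm:3} to the $n$-complicial setting. Suppose $\Cscr$ and $\Dscr$ are two $\infty$-categorical enhancements of $\T$. By Lemma~\ref{lem:t}, the bounded $t$-structure on $\T$ lifts to bounded $t$-structures on $\Cscr$ and $\Dscr$, and $n$-compliciality is detectable on the homotopy category. Since every object is bounded above, $\Cscr \we \SWscr(\Cscr_{\geq 0})$ and $\Dscr \we \SWscr(\Dscr_{\geq 0})$, so by Lemma~\ref{lem:sp} it suffices to produce a compatible equivalence $\Ind(\Cscr_{\geq 0}) \we \Ind(\Dscr_{\geq 0})$ and descend by passing to compact objects.

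Both $\Ind(\Cscr_{\geq 0})$ and $\Ind(\Dscr_{\geq 0})$ are $n$-complicial separated Grothendieck prestable $\infty$-categories. The idea is to realize both as left-exact accessible localizations of a common presheaf $\infty$-category $\Pscr_\Sigma(\Cscr_{[0,n]})$ built from the $1$-categorical data $\Cscr_{[0,n]}$, which plays the role that $\Pscr_\Sigma(\Cscr^\heart)$ played in Theorem~\ref{thm:3}. The $n$-complicial hypothesis guarantees that $\Cscr_{[0,n]}$ generates: given $Z \in \Ind(\Cscr_{\geq 0})$, first approximate by a coproduct $\bigoplus Y_i \to Z$ with $Y_i \in \Cscr_{\geq 0}$ surjective on $\pi_0$, then for each $Y_i$ produce $X_i \in \Cscr_{[0,n]}$ with $X_i \to Y_i$ surjective on $\pi_0$, and compose. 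The triangulated equivalence $h\Cscr \we h\Dscr$ supplies the $1$-categorical identification $\Cscr_{[0,n]} \simeq \Dscr_{[0,n]}$ needed to set up a comparison between the two localizations.

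The main obstacle is showing the saturated classes $S_\Cscr$ and $S_\Dscr$ of local equivalences coincide. In the $0$-complicial case, the cofibers of the unit maps $U_Y[k] \to U_{Y[k]}$ for $Y \in \Cscr^\heart$ had homotopy groups of the form $X \mapsto \Hom_{h\Cscr}(X, Y[j])$, which are visibly determined by $\T$ and hence belong to both $\Kscr_\Cscr$ and $\Kscr_\Dscr$. For $Y \in \Cscr_{[0,n]}$ the cofibers are more intricate but should still be assembled, via finite iterated extensions, from functors of the form $X \mapsto \Hom_{h\Cscr}(X, Y[j])$ which are determined by the triangulated equivalence. One then needs the Postnikov-style filtration argument: every $Z \in \Pscr_\Sigma(\Cscr_{[0,n]})$ should admit an exhaustive increasing filtration with graded pieces shifts of objects in $\Ind(\Cscr_{[0,n]})$, allowing the class of objects whose unit maps lie in $S_\Dscr$ to be shown closed under extensions, filtered colimits, and fibers of maps to suspensions, and hence to contain everything. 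Controlling this filtration over the (now non-abelian-hearted) base $\Cscr_{[0,n]}$ and verifying the compatibility of cofibers with the triangulated data is the principal technical hurdle.

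Separately, existence of an enhancement is not automatic from the hypotheses; unlike the $0$-complicial case where one can appeal to $\cDscr(\Ind(\Cscr^\heart))^\omega$, for $n$-complicial $\T$ one would need a direct construction, perhaps via the dg nerve of a natural dg model or via a universal construction from the pretriangulated data, which we do not attempt to specify here.
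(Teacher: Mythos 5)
The statement you are trying to prove is stated in the paper only as a conjecture; the paper offers no proof of it, and the surrounding discussion (Theorem~\ref{thm:nogo}, the Patchkoria $\BP\langle 1\rangle$ example, and the Dugger--Shipley example following the conjecture) is there precisely to explain why the problem is open. So there is no argument of the paper's to match yours against, and your proposal should be judged on its own terms: it is a reasonable opening plan, but it contains a genuine gap that is not merely the ``technical hurdle'' you flag at the end.

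The gap occurs earlier, at the sentence ``The triangulated equivalence $h\Cscr\we h\Dscr$ supplies the $1$-categorical identification $\Cscr_{[0,n]}\simeq\Dscr_{[0,n]}$.'' For $n\geq 1$ the subcategory $\Cscr_{[0,n]}$ is not a $1$-category: its mapping spaces have homotopy in degrees up to $n$, and this higher coherence data (k-invariants, Toda brackets, the $\EE_1$-structure on endomorphism objects) is exactly what a triangulated equivalence does not see. The proof of Theorem~\ref{thm:3} works because in the $0$-complicial case the generating subcategory is the heart, an ordinary additive category, so $\Pscr_\Sigma(\Cscr^\heart)$ is canonically attached to the triangulated data and both enhancements become left exact localizations of one and the same ambient $\infty$-category; the whole argument about $S_\Cscr=S_\Dscr$ takes place inside that common object. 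For $n\geq 1$ there is no such common $\Pscr_\Sigma(\Cscr_{[0,n]})$: you would first have to show that the $\infty$-category (not the homotopy category) $\Cscr_{[0,n]}$ is determined by $\T$ with its $t$-structure, and that is essentially the content of the conjecture itself. The examples in Section~\ref{sub:conjecture} show that truncated endomorphism rings with isomorphic homotopy need not be equivalent as $\EE_1$-rings (this is how $h\Dscr(\BP\langle 1\rangle)\we h\Dscr(\pi_*\BP\langle 1\rangle)$ fails to lift), so any successful argument must explain why $n$-compliciality of a \emph{bounded} $t$-structure forces this truncated data to be rigid; your proposal does not engage with that point, and the subsequent machinery (the saturated classes, the filtration argument) cannot even be set up until it is resolved. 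The existence question you raise at the end is a further unresolved issue, but the uniqueness half already fails at the identification step above.
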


We do not conjecture that all such admit unique dg enhancements. Indeed, this
can already be seen to be false by looking at examples cooked up from different
dg $\ZZ$-algebra structures on the same $\EE_1$-ring spectrum.

\begin{example}
    Consider the dg $\ZZ$-algebras $A$ and $B$ of
    Example~\ref{ex:dugger-shipley}. These are both noetherian $\EE_1$-rings
    and we can thus consider $\Dscr^b(A)$, the full subcategory of $\Dscr(A)$
    on the bounded objects $X$ with $\pi_nX$ finitely generated over $\pi_0A$
    for all $n$. Make a similar definition for $\Dscr^b(B)$. The equivalence
    $\D(A)\we\D(B)$ preserves the triangulated subcategories
    $\D^b(A)=h\Dscr^b(A)$ and $\D^b(B)=h\Dscr^b(B)$. But, $\Dscr^b(A)$ is not
    equivalent to $\Dscr^b(B)$.
\end{example}

\subsection{Category theory questions}\label{sub:categories}

As far as we know, the next question could have a positive answer in all cases.
If so, it would allow us to remove presentability from Theorem~\ref{mt:1} in a
more simple way than we do in Appendix~\ref{sub:appendix}.

\begin{question}
    Let $\Cscr$ and $\Dscr$ be stable $\infty$-categories with a triangulated
    equivalence $\Ho(\Cscr)\we\Ho(\Dscr)$. If $\Cscr$ is presentable, is
    $\Dscr$ presentable? What if $\Cscr$ admits additionally an accessible
    $t$-structure which is compatible with filtered colimits?
\end{question}

Here is a related question.

\begin{question}
    Suppose that $\Cscr$ and $\Dscr$ are small stable $\infty$-categories with
    $h\Cscr\we h\Dscr$. Is it true that $h\Ind(\Cscr)\we h\Ind(\Dscr)$?
    Certainly this is the case if in fact $\Cscr\we\Dscr$.
\end{question}

\appendix
\section{Appendix: removing presentability}\label{sub:appendix}

Theorem~\ref{mt:2} is the $\infty$-categorical analogue of a theorem of
Canonaco and Stellari in the case of dg enhancements. Their
theorem notably does not include a presentability hypothesis. We indicate how to use
their approach, which itself builds on the work of
Lunts--Orlov~\cite{lunts-orlov}, to remove the presentability qualification in
Theorem~\ref{mt:2}.

True to the spirit of this paper, we use prestable $\infty$-categories in the
proof. This replaces the use of the brutal truncations in Sections 3 and 4
of~\cite{cs-uniqueness} and makes it somewhat easier to establish the existence
of the comparison functor $F'$ in the proof.

\begin{theorem}\label{thm:appendix}
    If $\Ascr$ is a Grothendieck abelian category, then $\D(\Ascr)$ admits a
    unique $\infty$-categorical enhancement.
\end{theorem}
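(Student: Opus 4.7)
The plan is to reduce to Theorem~\ref{thm:2} by showing that any stable $\infty$-categorical enhancement $\Cscr$ of $\D(\Ascr)$ is automatically presentable, following the strategy of Canonaco--Stellari adapted from the dg to the $\infty$-categorical setting. Once this is established, Theorem~\ref{thm:2} applies directly to give $\Cscr\we\Dscr(\Ascr)$.

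Given a stable $\infty$-category $\Cscr$ with a triangulated equivalence $h\Cscr\we\D(\Ascr)$, I would first lift the standard $t$-structure via Lemma~\ref{lem:t}; by Lemma~\ref{lem:detection} the lifted $t$-structure is left and right separated. Next, $\Cscr$ inherits all small coproducts from $h\Cscr$: given a family $\{X_i\}\subseteq\Cscr$ whose coproduct in $h\Cscr$ is some object $C$, picking lifts of the universal maps $X_i\rightarrow C$ produces a map $\Map_\Cscr(C,Y)\rightarrow\prod_i\Map_\Cscr(X_i,Y)$ whose effect on $\pi_n$ is the coproduct isomorphism in $h\Cscr$ applied to the target $Y[n]$, and hence is an equivalence for all $n\geq 0$. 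Combined with stability, $\Cscr$ has all small colimits. At this stage Proposition~\ref{prop:ba} also identifies $\Cscr^+\we\Dscr^+(\Ascr)$, which together with Lemma~\ref{lem:detection}(c) shows the $t$-structure on $\Cscr$ is compatible with filtered colimits.

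The essential step is to upgrade cocompleteness of $\Cscr$ to presentability. The key observation is that $\kappa$-compactness in $\Cscr$ can be detected on the homotopy category: since $\pi_n\Map_\Cscr(X,-)\iso\Hom_{h\Cscr}(X,-[n])$ for $n\geq 0$, the functor $\Map_\Cscr(X,-)$ preserves $\kappa$-filtered colimits precisely when $\Hom_{h\Cscr}(X,-)$ does. A set of $\kappa$-compact generators of $\D(\Ascr)$ therefore lifts to a set of $\kappa$-compact objects of $\Cscr$; arguing as in~\cite[Sections~3 and~4]{cs-uniqueness}, one shows these generate $\Cscr$ in the accessible sense, yielding presentability, and Theorem~\ref{thm:2} then finishes the proof. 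The main obstacle is verifying that the lifted generators form a generating set for an accessible structure on $\Cscr$: while the analogous statement is immediate on the homotopy category by presentability of $\Dscr(\Ascr)$, upgrading to the $\infty$-categorical level requires a coherent lift of the witnessing filtered diagrams to $\Cscr$, and is the prestable $\infty$-categorical analogue of the brutal-truncation technique at the technical core of~\cite{cs-uniqueness}.
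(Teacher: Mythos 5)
Your reduction strategy---show that an arbitrary enhancement $\Cscr$ of $\D(\Ascr)$ is automatically presentable and then invoke Theorem~\ref{thm:2}---founders precisely at the step you flag as the ``main obstacle,'' and that obstacle is not a technical loose end but an open problem acknowledged in the paper itself. For a general Grothendieck abelian category, $\D(\Ascr)$ is well generated but need not be compactly generated (almost module categories give examples with no nonzero compact objects at all, cf.\ Lemma~\ref{lem:compacts}), so you are forced to work with $\kappa$-compact generators for uncountable $\kappa$. The argument that presentability can be read off the homotopy category is specific to the compact case: the proof of~\cite[1.4.4.2]{ha} behind Proposition~\ref{prop:presentable} uses that $\omega$-filtered colimits in a cocomplete stable $\infty$-category are controlled by coproducts and cofibers, and this breaks down for $\kappa>\omega$; the Warning following Proposition~\ref{prop:presentable} and the first Question of Section~\ref{sub:categories} state explicitly that it is not known whether presentability (or well-generation) transfers across a triangulated equivalence. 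Your appeal to ``arguing as in~\cite[Sections~3 and~4]{cs-uniqueness}'' does not fill this: Canonaco--Stellari never prove that the unknown enhancement is well generated or presentable; their brutal-truncation machinery is used to construct and compare a functor out of a resolution category, not to establish accessibility of the target. So the coherent lifting of $\kappa$-filtered witnessing diagrams that your argument needs is exactly the missing content, and no route to it is indicated.

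The proof in the appendix avoids the question of presentability of $\Cscr$ altogether. After lifting the $t$-structure and reducing to connective parts via right completeness and Lemma~\ref{lem:sp}, it uses the Gabriel--Popescu presentation $L\colon\Dscr(R)_{\geq 0}\to\Dscr(\Ascr)_{\geq 0}$ for $R=\Hom_\Ascr(X,X)$, builds a colimit-preserving functor $P'\colon\Dscr(R)_{\geq 0}\we\Pscr_\Sigma(\Ascr_0)\to\Cscr_{\geq 0}$ from the universal property of the nonabelian derived category, shows $P'$ annihilates the kernel of $L$ (bounded objects first, then left exactness plus left separatedness), and so obtains a comparison functor $F'\colon\Dscr(\Ascr)_{\geq 0}\to\Cscr_{\geq 0}$; fully faithfulness and essential surjectivity are then deduced from Lemma~\ref{lem:lo}, the prestable analogue of~\cite[Proposition~3.5]{cs-uniqueness}, which compares two coproduct-preserving exact functors agreeing on finitely generated projectives. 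If you want to pursue your route, you would either have to resolve the presentability-transfer question (which would indeed simplify matters, as the paper notes) or restrict to the compactly generated case, where your argument collapses to Corollary~\ref{maincor:2} and proves strictly less than the theorem.
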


\begin{proof}
    Let $\Cscr$ be an $\infty$-categorical enhancement of $\D(\Ascr)$ and let
    $F\colon h\Cscr\we\D(\Ascr)$ be a fixed equivalence. Then,
    the Postnikov $t$-structure on $\D(\Ascr)$ induces a $t$-structure on
    $\Cscr$ which is right separated and compatible with countable coproducts.
    In particular, it is right complete by Proposition~\ref{prop:sepcomplete}. It follows,
    by Lemma~\ref{lem:sp}, that to prove $\Cscr\we\Dscr(\Ascr)$ it is enough to prove
    that $\Cscr_{\geq 0}\we\Dscr(\Ascr)_{\geq 0}$. Choose a generator $X$ of
    the abelian category $\Ascr$ and let $R=\Hom_\Ascr(X,X)$ be the ring of
    endomorphisms of $X$. Since $\Dscr(\Ascr)_{\geq 0}$ is
    $0$-complicial, it follows by the $\infty$-categorical Gabriel--Popescu
    theorem~\cite[C.2.1.6]{sag} that the natural functor $\Dscr(\Ascr)_{\geq
    0}\rightarrow\Dscr(R)_{\geq 0}$ is fully faithful and admits a left exact
    left adjoint. Let $\Kscr$ be the kernel of this functor.
    By~\cite[C.2.3.8]{sag},
    $\Kscr$ is itself a Grothendieck prestable $\infty$-category and
    $\Kscr^\heart\subseteq\Mod_R$ is a full Serre subcategory. Since
    $\Dscr(\Ascr)_{\geq 0}$ is separated, by~\cite[C.5.2.4]{sag} we see that $\Kscr$ is the full subcategory of
    $\Dscr(R)_{\geq 0}$ consisting of those objects $Y$ such that
    $\pi_iY\in\Kscr^\heart$ for all $i$.

    Let $\Ascr_0\subseteq\Ascr$ be the full subcategory consisting of finite
    direct sums of the object $X$. Then, $\Ascr_0$ is an additive
    $\infty$-category and $\Pscr_\Sigma(\Ascr_0)\we\Dscr(R)_{\geq 0}$. Here,
    $\Pscr_\Sigma(\Ascr_0)\subseteq\Pscr(\Ascr_0)$, called the {\bfseries
    nonabelian derived category}, is the full subcategory of
    functors $\Ascr_0^\op\rightarrow\Sscr$ which preserve finite products.
    In particular, by~\cite[5.5.8.15]{htt}, to give a colimit preserving functor
    $\Pscr_\Sigma(\Ascr_0)\rightarrow\Cscr_{\geq 0}$ is the same as giving a
    finite coproduct
    preserving functor $\Ascr_0\rightarrow\Cscr_{\geq 0}$. Such a functor is canonically
    induced by $F$. Thus, we have a diagram of left adjoint functors
    $$\xymatrix{&\Dscr(R)_{\geq 0}\ar[rd]^{P'}\ar[ld]_L&\\\Dscr(\Ascr)_{\geq
    0}&&\Cscr_{\geq 0}.}$$ We first show that $P'$ factors through $L$, or in other words
    that there exists a functor $F'\colon\Dscr(\Ascr)_{\geq
    0}\rightarrow\Cscr_{\geq 0}$ and an equivalence of functors $P'\we F'\circ
    L$. If such a factorization exists, it is unique since $L$ is a
    localization.

    To prove the existence of the factorization, it is enough to prove that if $Y\in\Kscr$, then $P'(Y)\we 0$.
    If $Y$ is bounded, then this follows immediately from the fact that $P'$ and
    $L$ are compatible with the equivalence
    $F^\heart\colon\Cscr^\heart\we\Ascr$. In fact, more generally, we see by
    Proposition~\ref{prop:ba} that $\Cscr^-\we\Dscr^-(\Ascr)$ and that this
    identification is compatible with $P'$ and $L$. Because filtered colimits in
    $\Cscr_{\geq 0}$ are left exact using Lemma~\ref{lem:detection}, a careful reading of the proofs
    of~\cite[C.2.5.2 and C.3.2.1]{sag} implies that the statement
    of~\cite[C.2.5.2]{sag} applies to $\Cscr_{\geq 0}$ even though we do not
    yet know that it is Grothendieck prestable (but we do know that it is
    prestable and has all limits and colimits). In particular, $P'$ is left
    exact. Suppose now that $Y$ is a general object of $\Kscr$. Then,
    $\pi_iY\in\Kscr^\heart$ for each $i$ and hence each truncation
    $\tau_{\leq n}Y$ is also in $\Kscr$. By the observation about bounded
    objects made above, we know that $P'(\tau_{\leq n}Y)\we
    0$ for all $n$. In particular, since $P'$ is left exact, $P'(\tau_{\geq n+1}Y)\we P'(Y)$ for all $n$.
    Hence, $P'(Y)$ is $\infty$-connective and therefore $P'(Y)\we 0$ as $\Cscr$
    is left separated. Thus, the
    factorization exists as claimed, using~\cite[C.2.3.10]{sag} (which
    extends to the case where the target category is merely cocomplete
    prestable and not necessarily Grothendieck prestable).

    We have shown the existence of a left exact functor $F'\colon\Dscr(\Ascr)_{\geq
    0}\rightarrow\Cscr_{\geq 0}$ which preserves colimits. Let $U$ be the
    right adjoint of $L$. We now want to prove that $F'$ is fully faithful. It is enough to prove
    that $\Map_{\Dscr(\Ascr)}(X,Y)\rightarrow\Map_\Cscr(F'X,F'Y)$ is an equivalence
    for all $Y\in\Dscr(\Ascr)_{\geq 0}$. For this, it is enough to show that
    $\Hom_{\D(\Ascr)}(X,Y)\rightarrow\Hom_{h\Cscr}(F'X,F'Y)$ is an isomorphism
    for all $Y\in\Dscr(\Ascr)_{\geq 0}$.

    Write $P=F\circ hL\colon\D(R)_{\geq 0}\rightarrow h\Cscr_{\geq 0}$. 
    We obtain a diagram
    $$\Hom_{h\Cscr}(PR,PUY)\leftarrow\Hom_{\D(R)}(R,UY)\iso\Hom_{\D(\Ascr)}(X,Y)\rightarrow\Hom_{h\Cscr}(P'R,P'UY);$$
    by adjunction the left arrow is an isomorphism. Fully faithfulness now
    follows from Lemma~\ref{lem:lo} below. We then have that
    $F'\colon\Dscr(\Ascr)\rightarrow\Cscr$ is fully faithful. Since every
    object of $h\Cscr$ is of the form $FY$ for some $Y\in\Dscr(\Ascr)$ and
    since $FY\we PUY\we P'UY\we F'Y$ (also by the next lemma), we see that $F'$ is essentially surjective.
    Thus, $F'$ is an equivalence and we are done.
\end{proof}

The next lemma is our version of~\cite[Proposition~3.5]{cs-uniqueness}, which
is itself a generalization of~\cite[Proposition~3.4]{lunts-orlov}. Our proof
closely follows the arguments of Lunts--Orlov and Canonaco--Stellari with
easy adjustments for the $\infty$-categorical and prestable setting.

\begin{lemma}\label{lem:lo}
    Let $\Cscr_{\geq 0}$ be a prestable $\infty$-category with all limits and
    colimits and let
    $P_0,P_1\colon\D(R)_{\geq 0}\rightrightarrows h\Cscr_{\geq 0}$ be two
    coproduct preserving exact functors. Let
    $\Ascr_0=\Mod_R^{\proj,\omega}\subseteq\D(R)_{\geq 0}$ be the full subcategory of
    finitely generated projective left $A$-modules. Suppose that there is a
    natural isomorphism $\theta_0\colon P_0|_{\Ascr_0}\iso P_1|_{\Ascr_0}$ of
    the functors $P_0$ and $P_1$ restricted to $\Ascr_0$. For each object $Y\in\D(R)_{\geq
    0}$, there is an isomorphism $\sigma_Y\colon P_0Y\rightarrow P_1Y$ such that
    for each $f\colon R\rightarrow Y$, the diagram
    $$\xymatrix{P_0R\ar[r]^{P_0(f)}\ar[d]^{\theta_0}&P_0Y\ar[d]^{\sigma_Y}\\
    P_1R\ar[r]^{P_1(f)}&P_1Y}$$ commutes in $h\Cscr_{\geq 0}$.
\end{lemma}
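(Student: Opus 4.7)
The plan is to build $\sigma_Y$ stepwise, extending $\theta_0$ from finitely generated free modules to arbitrary objects of $\Dscr(R)_{\geq 0}$, closely following the strategies of Lunts--Orlov~\cite[Proposition~3.4]{lunts-orlov} and Canonaco--Stellari~\cite[Proposition~3.5]{cs-uniqueness}. The proof is organized around the principle that any $Y\in\Dscr(R)_{\geq 0}$ is built from copies of $R$ by coproducts and cofibers, and by hypothesis $P_0$ and $P_1$ preserve both structures.

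First, because each $P_j$ preserves coproducts, $\theta_0$ extends canonically to a natural isomorphism on the full subcategory $\Pscr\subseteq\Dscr(R)_{\geq 0}$ of arbitrary direct sums of $R$, via $\sigma_{\bigoplus_i R} := \bigoplus_i \theta_0(R)$. Next, for $Y\in\Mod_R^\heart$, a free presentation $F_1 \to F_0 \to Y$ is a cofiber sequence in $\Dscr(R)_{\geq 0}$; applying $P_0$ and $P_1$ yields cofiber sequences in $h\Cscr_{\geq 0}$, and the cofiber-completion axiom (the prestable analogue of TR3) produces an isomorphism $\sigma_Y\colon P_0Y \iso P_1Y$ completing the map of cofiber sequences determined by $\sigma_{F_0}$ and $\sigma_{F_1}$. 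For $Y$ with $\pi_i Y = 0$ outside $[0,n]$, one then inducts on $n$ via the Postnikov cofiber sequence $(\pi_n Y)[n] \to Y \to \tau_{\leq n-1} Y$, using the same cofiber-completion argument at each stage.

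To handle general (possibly unbounded) $Y\in\Dscr(R)_{\geq 0}$, one presents $Y$ as a sequential colimit $Y \we \colim_n Y_n$ of bounded objects; one natural choice is to take realizations of the skeletal approximations of a bar-type simplicial resolution by free modules. Because $P_j$ preserves coproducts and cofibers, it preserves the telescope presentation of this colimit, and the $\sigma_{Y_n}$ assemble into $\sigma_Y$. Compatibility with a map $f\colon R \to Y$ then follows from the compactness and projectivity of $R$: any such $f$ factors through a bounded stage of the filtration, where the required square commutes by the inductive construction and ultimately reduces to coproduct-compatibility with $\theta_0(R)$.

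The principal obstacle is that the cofiber-completion axiom in the homotopy $1$-category $h\Cscr_{\geq 0}$ supplies the comparison isomorphism only up to non-canonical choices, so the $\sigma_{Y_n}$ do not automatically fit into a coherent tower of natural transformations. The lemma's conclusion asks only for compatibility with maps out of $R$, which is strictly weaker than naturality and affords enough flexibility to make consistent choices, as in~\cite{lunts-orlov,cs-uniqueness}; nevertheless, organizing these choices through the inductive and limit steps---particularly in the passage to the unbounded case, where one must promote a tower of non-canonical lifts to a well-defined $\sigma_Y$---is the technical heart of the argument.
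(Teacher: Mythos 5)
Your overall plan---extend $\theta_0$ over coproducts, build $Y$ from shifted frees, complete squares with TR3, pass to a telescope, and use projectivity of $R$ to factor $f$ through a finite stage---is the same family of argument as the paper's (which follows Lunts--Orlov and Canonaco--Stellari), but two steps do not work as written. First, your treatment of $Y\in\Mod_R^\heart$ is incorrect: a free presentation $F_1\to F_0\to Y$ is \emph{not} a cofiber sequence in $\Dscr(R)_{\geq 0}$ unless $F_1\to F_0$ is injective, because $\pi_1$ of the cofiber is $\ker(F_1\to F_0)$. So this step only produces $\sigma_Y$ for modules of projective dimension $\leq 1$, and your Postnikov induction for bounded objects, whose graded pieces are shifted \emph{modules} rather than shifted projectives, inherits the same problem. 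The paper sidesteps the heart case entirely: it filters an arbitrary $Y$ by the skeleta of a simplicial resolution by projectives, so every graded piece $\gr^F_iY$ lies in $\Mod_R^{\proj}[i]$, where the natural isomorphism $\theta_i\we\theta_0[i]$ is already available.

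Second, you correctly identify that the TR3 fillers are non-canonical, so the stagewise isomorphisms need not be compatible with the transition maps and need not assemble over the telescope---but you offer no mechanism for resolving this, and that mechanism is precisely the content of the paper's proof. There, $\sigma_{n+1}$ is chosen only after verifying that the outer diagram over $F_nY\to F_{n+1}Y\to\gr^F_{n+1}Y\to F_nY[1]$ commutes: the connecting map $\gr^F_{n+1}Y\to F_nY[1]$ factors through $\gr^F_nY[1]$ by projectivity of $\gr^F_{n+1}Y[-n-1]$, reducing commutativity to naturality of $\theta_{n+1}$ together with the inductive hypothesis via $\theta_{n+1}\we\theta_n[1]$. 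These recorded compatibilities are then exactly what make the left-hand square of the telescope diagram commute (checked summand by summand), so that TR3 yields $\sigma_Y$, and what give the final square for $f\colon R\to Y$ after factoring $f$ through $F_0Y$ and through $\bigoplus_iF_iY$. Without carrying out this inductive bookkeeping, the assertion that ``the $\sigma_{Y_n}$ assemble into $\sigma_Y$'' and the concluding compatibility with $f$ are unsupported.
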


Since $\Cscr_{\geq 0}$ is a full subcategory of the stable $\infty$-category
$\SWscr(\Cscr_{\geq 0})$, the homotopy category $h\Cscr_{\geq 0}$ is a full subcategory of
the triangulated category $h\SWscr(\Cscr_{\geq 0})$. Moreover, $h\Cscr_{\geq 0}$ is closed
under cones, coproducts, and positive shifts in $h\SWscr(\Cscr_{\geq 0})$. We will use these
facts implicitly in the proof.

\begin{proof}
    We use the nonabelian derived category $\Pscr_\Sigma(\Ascr_0)$ which
    appeared in the proofs of Theorem~\ref{mt:3} and Theorem~\ref{thm:appendix}. Each object $Y$ of
    $\Pscr_\Sigma(\Ascr_0)$ can be represented by a simplicial object
    $Y_\bullet\colon\Delta^\op\rightarrow\Ind(\Ascr_0)$, where $\Ind(\Ascr_0)$ is an additive
    category with filtered colimits, but typically not all colimits. We can
    also assume that each $Y_n$ is a projective object of $\Mod_R$.
    Filtering by skeletons, we see that $Y$ admits a filtration $F_\star Y$ where
    $F_i Y\we 0$ for $i<0$ and $\gr^F_iY\in\Mod_R^\proj[i]$ for $i\geq 0$.

    The natural isomorphism $\theta_0$ extends to $P_0$ and $P_1$ when restricted to $\Ind(\Ascr_0)$.
    Fix $n>0$. There are natural isomorphisms $\theta_n$ of $P_0$ and $P_1$
    when restricted to $\Ind(\Ascr_0)[n]\subseteq\Pscr_\Sigma(\Ascr_0)$. We also have
    natural isomorphisms $\theta_i[1]\we\theta_{i+1}$ for $i\geq 0$.

    We will prove inductively that there exist
    isomorphisms $\sigma_i\colon P_0(F_iY)\rightarrow P_1(F_iY)$ and
    $\sigma_Y\colon P_0(Y)\rightarrow P_1(Y)$ such that the diagrams of exact
    sequences
    \begin{equation}\label{33}\xymatrix{
    P_0(F_{i-1}Y)\ar[r]\ar[d]^{\sigma_{i-1}}&P_0(F_iY)\ar[d]^{\sigma_i}\ar[r]&P_0(\gr^F_iY)\ar[d]^{\theta_i}\\
    P_1(F_{i-1}Y)\ar[r]&P_1(F_iY)\ar[r]&P_1(\gr^F_iY)
    }\end{equation} and
    $$\xymatrix{
    \bigoplus_i P_0(F_iY)\ar[r]\ar[d]^{\bigoplus_i\sigma_i}&\bigoplus_i
        P_0(F_iY)\ar[d]^{\bigoplus_i\sigma_i}\ar[r]&P_0(Y)\ar[d]^{\sigma_Y}\\
    \bigoplus_i P_1(F_iY)\ar[r]&\bigoplus_i P_1(F_iY)\ar[r]&P_1(Y)
    }$$
    commute in $h\Cscr_{\geq 0}$.

    Suppose we have inductively chosen
    isomorphisms $\sigma_n\colon P_0(F_iY)\rightarrow
    P_1(F_iY)$ for $0\leq i\leq n$ in $h\Cscr_{\geq 0}$
    such that for each $0\leq i< n$ the diagram
    $$\xymatrix{
        P_0(F_iY)\ar[r]\ar[d]^{\sigma_i}& P_0(F_{i+1}Y)\ar[d]^{\sigma_{i+1}}\ar[r]& P_0(\gr_{i+1}^F
                    Y)\ar[d]^{\theta_{i+1}}\\
            P_1(F_iY)\ar[r]& P_1(F_{i+1}Y)\ar[r]& P_1(\gr_{i+1}^F Y)}$$
    commutes. (The $i=0$ case follows because $\theta_0[1]\we\theta_1$.) We claim that the diagram
    \begin{equation}\label{eq:big}\xymatrix{
        P_0(F_nY)\ar[r]\ar[d]^{\sigma_n}& P_0(F_{n+1}Y)\ar[r]\ar@{.>}[d]&
            P_0(\gr_{n+1}^FY)\ar[d]^{\theta_{n+1}}\ar[r]&P_0(F_nY)[1]\ar[d]^{\sigma_n[1]}\\
        P_1(F_nY)\ar[r]& P_1(F_{n+1}Y)\ar[r]&
        P_1(\gr_{n+1}^FY)\ar[r]&P_1(F_nY)[1]\\
    }\end{equation}
    commutes without the dotted arrow. The left square trivially commutes, since both compositions are zero. 
    Note that by the projectivity of $\gr_{n+1}^FY[-n-1]$, the right hand square itself
    factors naturally into two further squares
    $$\xymatrix{
    P_0(\gr_{n+1}^FY)\ar[d]^{\theta_{n+1}}\ar[r]&
        P_0(\gr_n^FY[1])\ar[r]\ar[d]^{\theta_{n+1}}& P_0(F_nY)[1]\ar[d]^{\sigma_n[1]}\\
    P_1(\gr_{n+1}^FY)\ar[r]&P_1(\gr_n^FY[1])\ar[r]& P_1(F_nY).}$$
    Here, the left hand square again commutes since $\theta_{n+1}$ is a natural
    transformation and the right hand square commutes by our inductive hypothesis and
    the fact that $\theta_{n+1}\we\theta_n[1]$.

    Applying the triangulated category axiom TR3 (see~\cite{neeman} or~\cite{ha}), we see that a dotted map $\sigma_{n+1}$
    exists making diagram~\eqref{eq:big} commute. Thus, by induction, we can choose the
    $\sigma_n$ for all $n$. Recall that the colimit $\colim_n F_nY$ can be computed as the
    cofiber of an appropriate map $\bigoplus F_nY\rightarrow\bigoplus F_nY$.
    Thus, consider the diagram
    $$\xymatrix{
        \bigoplus P_0(F_nY)\ar[r]\ar[d]^{\oplus\sigma_n}&\bigoplus
            P_0(F_nY)\ar[d]^{\oplus\sigma_n}\ar[r]& P_0(Y)\ar[r]\ar@{.>}[d]&\bigoplus
        P_1(F_nY)[1]\ar[d]^{\oplus\sigma_n[1]}\\
            \bigoplus P_1(F_nY)\ar[r]&\bigoplus P_1(F_nY)\ar[r]& P_1(Y)\ar[r]&
            \bigoplus P_1(F_nY)[1]
            }$$
    (where we use that $P_0$ and $P_1$ commute with coproducts).
    This time, the right hand square commutes for trivial reasons. 
    To see that the left hand square commutes, it is enough to check that it commutes when
    restricted to each term $P_0(F_nY)$ of the source. This follows by induction on $F_iY$
    for $0\leq i\leq n$ from the arguments in the second part of the proof. It follows that a dotted arrow exists, which is
    again an isomorphism since the other two arrows are (using the octahedral axiom TR4).

    Now, fix $R\xrightarrow{f} Y$. Then, by the projectivity of $F$, $f$ factors through $F_0Y$.
    In particular, the diagram
    $$\xymatrix{
        P_0(R)\ar[r]^{P_0(f)}\ar[d]^{\theta_0}&P_0(F_0Y)\ar[d]^{\theta_0=\sigma_0}\\
        P_1(R)\ar[r]^{P_1(f)}&P_1(F_0Y)}$$
    commutes since $\theta_0$ is a natural transformation. By the commutativity
    of~\eqref{33}, we see that
    $$\xymatrix{
        P_0(R)\ar[r]^{P_0(f)}\ar[d]^{\theta_0}&P_0(F_iY)\ar[d]^{\sigma_i}\\
        P_1(R)\ar[r]^{P_1(f)}&P_1(F_iY)}$$
    commutes for all $i$. Since $f$ factors as well through
    $\bigoplus_iF_iY\rightarrow Y$, the diagram
    $$\xymatrix{
        P_0(R)\ar[r]^{P_0(f)}\ar[d]^{\theta_0}&\bigoplus_iP_0(F_iY)\ar[d]^{\bigoplus\sigma_i}\ar[r]&P_0(Y)\ar[d]^{\sigma_Y}\\
        P_1(R)\ar[r]^{P_1(f)}&\bigoplus_iP_1(F_iY)\ar[r]&P_1(Y)
    }$$ commutes. This is what we wanted to show.
\end{proof}

\small
\bibliographystyle{amsplain}
\bibliography{ue}

\vspace{20pt}
\scriptsize
\noindent
Benjamin Antieau\\
Northwestern University\\
Department of Mathematics\\
2033 Sheridan Road\\
Evanston, IL 60208\\
\texttt{antieau@northwestern.edu}

\end{document}